\DeclareMathOperator{\domain}{dom}
\def\R{{\mathbb R}}
\def\calA{{\mathcal A}}  
\def\calG{{\mathcal G}}  \def\calI{{\mathcal I}}
 \def\calN{{\mathcal N}} 
\def\calP{{\mathcal P}}  
 \def\calT{{\mathcal T}}
\DeclarePairedDelimiter{\skp}{\langle}{\rangle}
\newcommand{\res}{\mathop{\hbox{\vrule height 7pt width .5pt depth 0pt
\vrule height .5pt width 6pt depth 0pt}}\nolimits}
\newcommand{\mres}{\mathbin{\vrule height 1.4ex depth 0pt width
0.13ex\vrule height 0.13ex depth 0pt width 1.0ex}}
\newtheorem{theorem}{Theorem}[section]
\newtheorem{proposition}[theorem]{Proposition}
\newtheorem{assumption}[theorem]{Assumption}
\newtheorem{definition}[theorem]{Definition}
\newtheorem{example}[theorem]{Example}
\newtheorem{remark}[theorem]{Remark}
\title{Dynamic Optimal Transport with Optimal Preferential Paths}
\author[1]{Marcello Carioni}
\author[2]{Juliane Krautz}
\author[2,3]{Jan-F. Pietschmann}
\affil[1]{{
\small
Department of Applied Mathematics, University of Twente, P.O. Box 217, 7500 AE Enschede,
The Netherlands. Email: m.c.carioni@utwente.nl
}}
\affil[2]{{
\small
University\"{a}t Augsburg, Institut f\"ur Mathematik, Universit\"{a}tsstra\ss e 12a, 86159 Augsburg, Germany. Emails: \{juliane.krautz, jan-f.pietschmann\}@uni-a.de
}}
\affil[3]{{
\small
Centre for Advanced Analytics and Predictive Sciences (CAAPS), University of Augsburg,
Universit\"{a}tsstr. 12a, 86159 Augsburg, Germany. 
}}
\date{}
\begin{document}
\maketitle

\begin{abstract}
We study a dynamic optimal transport type problem on a domain that consists of two parts: a compact set $\Omega \subset \R^d$ (bulk) and a non-intersecting and sufficiently regular curve $\Gamma \subset \Omega$. On each of them, a Benamou-Brenier type dynamic optimal transport problem is considered, yet with an additional mechanism that allows the exchange (at a cost) of mass between bulk and curve. In the respective actions, we also allow for non-linear mobilities. We first ensure the existence of minimizers by relying on the direct method of calculus of variations and we study the asymptotic properties of the minimizers under changes in the parameters regulating the dynamics in $\Omega$ and $\Gamma$.
Then, we study the case when the curve $\Gamma$ is also allowed to change, being the main interest in this paper. To this end, the Tangent-Point energy is added to the action functional in order to preserve the regularity properties of the curve and prevent self-intersections. Also in this case, by relying on suitable compactness estimates both for the time-dependent measures and the curve $\Gamma$, the existence of optimizers is shown.
We extend these analytical findings by numerical simulations based on a primal-dual approach that illustrate the behaviour of geodesics, for fixed and varying curves.
\end{abstract}

\textbf{Mathematics Subject Classification} 49Q22, 35R01, 60B05, 49M41

\section{Introduction}
Originally formulated by Monge in the 18th century and later refined by Kantorovich in the 20th century, Optimal Transport offers a mathematical framework for comparing and transforming probability distributions by minimizing the cost of moving mass from one distribution to another. 
Given a compact set $\Omega \subset \R^d$ and two probability measures $\mu \in P(\Omega)$ and $\nu \in P(\Omega)$ and choosing the Euclidean distance as the cost function results in the 
$2$-Wasserstein distance defined as  
\begin{align}
    W_2^2(\mu,\nu) = \sqrt{\inf_{\pi \in \Pi(\mu,\nu)} \int |x-y|^2 d\pi(x,y)}
\end{align}
where $\Pi(\mu,\nu)$ is the set of transport plans, that are all probability measures in the product space $\Omega\times \Omega$ having $\mu$ and $\nu$ as marginals. 
While the $2$-Wasserstein distance focuses on static settings, many real-world problems require transport models that evolve over time. This need has motivated the study of dynamic optimal transport introduced in the pioneering work by Benamou and Brenier \cite{benamou2000computational}. In this work it has been shown that the $2$-Wasserstein distance admits a reformulation as the following
dynamical fluid-mechanics problem  
\begin{align}
    W_2(\mu,\nu) = \inf_{(\rho_t, J_t)} \int_0^1\int_{\Omega}  \left|\frac{dJ_t}{d\rho_t}\right|^2\, d\rho_t dt
\end{align}
where the pair $(\rho_t, J_t)$ satisfies the continuity equation
\begin{align}
    \partial_t \rho_t + {\rm div}(J_t) = 0
\end{align}
in the distributional sense, with $\rho_{0} = \mu$ and $\rho_{1} = \nu$. Such dynamic formulation of the Wasserstein distance has proven useful in a great variety of contexts and it is a fundamental ingredient to develop a gradient flow theory in Wasserstein spaces \cite{Otto2001,JKO98,AGS2008}. Successful examples of applications are consequences of the usefulness of an optimal transport prior in understanding dynamics and includes among others applications to crowd modelling and traffic congestion \cite{maury2010macroscopic, carlier2008optimal, stephanovitch2024optimal}, dynamic inverse problems \cite{schmitzer2019dynamic, bredies2020optimal, bredies2023generalized, dawood2010continuity}, optimal control models and algorithms in machine learning \cite{onken2021ot}. 
Moreover, variants of the Benamou-Brenier approach have been explored to include more general action functionals \cite{Dolbeault_2008, Lisini2010, Carrillo2010}, dissipations \cite{chizat2018unbalanced, liero2018optimal, kondratyev2016new} and non-local interactions \cite{slepvcev2023nonlocal}.

In this paper, we contribute to the above mentioned theory by studying a dynamic optimal transport type problem where mass can be transported both in a bulk domain $\Omega$ as well as on a curve $\Gamma \subset \Omega$, which has co-dimension of at least one. 
In our model the mass between the bulk and the curve can be exchanged, while the total mass in the system remains a conserved quantity. 
The cost of transport is measured by means of an action functional that includes the kinetic energy of the time-dependent distribution in the bulk and on the curve, respectively, as well as the additional cost due to exchange of mass between bulk and curve. From an intuitive standpoint our mathematical framework models the optimal evolution of a population (for instance metropolitan traffic) when there exists a path (such as a highway) where the transport is faster (or cheaper), but it requires a cost to access the path. This could model highways where, due to a high traffic concentration at the toll booths, the access to the highway requires a non-negligible time investment. 
In our analysis we first consider the preferential curve $\Gamma$ to be fixed, where the goal is to find the optimal evolution of the population given initial and final configurations. This setting is strongly related to previous works that coupled transport in a domain with transport on its boundary, \cite{monsaingeon2021new} as well as recent approaches to transport on metric graphs, \cite{Erbar2022,BurgerHumpertPietschmann2023}.
In a second step, we extend the formalism by allowing the curve to vary, making its shape part of the optimization problem. Such problems can be seen in the context of optimal planning, where the preferential path is optimized according to initial and final distributions and under the prior assumption that the population is moving following an optimal transport dynamic.

We now describe the problem using the mathematical formalism we employ in the paper. We consider $\Omega$ a compact, connected subset of $\R^d$, $\Gamma = \gamma([0,1])$ the image of a regular, non self-intersecting curve of class $C^1([0,1];\Omega)$. The distribution of mass on the bulk and in $\Gamma$ is modelled via a pair of time-dependent measures $(\rho_t,\mu_t)$, indexed by $t\in [0, 1]$. Here, $\rho_t \in M(\Omega)$ represents the mass distributions in the  bulk while $\mu_t \in M(\Gamma)$ denotes the distribution on the preferential path $\Gamma$. 
Since the total mass will be conserved in our model, we assume a compatibility condition for initial and final distributions, meaning that $\rho_0(\Omega) + \mu_0(\Gamma) = 1$ and $\rho_1(\Omega) + \mu_1(\Gamma) = 1$. In what follows, this will be denoted by $(\rho_0, \mu_0) \in \mathcal{P}_{\rm adm}(\Gamma)$ and $(\rho_1, \mu_1) \in \mathcal{P}_{\rm adm}(\Gamma)$.
To model the transportation of mass, we denote by $J_t$ and $V_t$ the momentums associated to $\rho_t$ and $\mu_t$. Together, $(\rho_t,\mu_t, J_t, V_t)$ are assumed to satisfy two continuity equations which are coupled in order to allow exchange between bulk and curve while preserving the total mass:
\begin{align}\label{eq:CE_intro}
    \frac{\partial \rho_t}{\partial t}+\nabla \cdot J_t=0, \quad &\text{on} \quad \Omega, \qquad \frac{\partial \mu_t}{\partial t}+ \nabla_\Gamma \cdot V_t=f_t\quad \text{on} \ \  \Gamma, \\
    & J_t \cdot n_{\Gamma} = f_t \quad \text{in } \Gamma \nonumber.
\end{align}
We refer to Section \ref{sec:conn} for a more rigorous definition of \eqref{eq:CE_intro} given in a weak sense and also for a generalization of \eqref{eq:CE_intro} where 
 an exchange mechanism that acts at the end points of $\Gamma$ is added. We already note that this formulation allows for mass being present on $\Gamma$ and in $\Omega$ at the same spatial location. All curves that satisfy the above for given initial and final data $(\rho_0,\mu_0) \in \mathcal{P}_{\rm adm}(\Gamma)$ and $(\rho_1,\mu_1) \in \mathcal{P}_{\rm adm}(\Gamma)$ are denoted by ${\rm CE}(\Gamma)$.

Next we introduce an action functional which measures the time accumulated cost of transport in bulk and curve, respectively, but also the cost of mass transfer. Inspired by the works \cite{Dolbeault_2008, Lisini2010} we employ mobilities $m_\Omega$ and $m_\Gamma$ to regulate the dynamics on $\Omega$ and $\Gamma$.
 The action functional is then defined as  
\begin{align}\label{eq:BB}\tag{BB}
\calA_\Gamma(J_t, V_t, \rho_t, \mu_t, f_t) &= \int_{\Omega} \frac{|J_t|^2}{m_\Omega(\rho_t)} \, d\lambda_\Omega 
+ \alpha_1  \int_{\Gamma} \frac{|V_t|^2}{m_\Gamma(\mu_t)} \, d\lambda_\Gamma
+ \alpha_2  \int_{\Gamma} \frac{|f_t|^2}{m_\Gamma(\mu_t)} d\lambda_\Gamma.
\end{align}
Here, $\lambda_\Omega,\,\lambda_\Gamma$ are reference measures, necessary as the mobility yields integrands that are no more $1$-homogeneous. 
Note also that with a slight abuse of notation we have identified the measures $J_t, V_t, \rho_t, \mu_t, f_t$ with their densities respect to the reference measures. We refer to Section \ref{sec:dynamicformulation} for a rigorous definition. In addition $\alpha_1, \alpha_2 >0$ are parameters that either control the cost of transport on $\Omega$ and $\Gamma$, respectively ($\alpha_1)$, or that of being transported onto or off from $\Gamma$ ($\alpha_2$). For a fixed curve $\Gamma$ we thus consider the dynamic  variational problem
\begin{align}\label{eq:opt_fixed_intro}
\inf_{(J_t, V_t, \rho_t, \mu_t, f_t) \in {\rm CE}(\Gamma)} \int_0^1 \calA_\Gamma(J_t, V_t, \rho_t, \mu_t, f_t) dt.
\end{align}
In our analysis we prove that \eqref{eq:opt_fixed_intro} admits a minimizer and study its asymptotic behaviour when $\alpha_2 \rightarrow 0$ and $\alpha_2 \rightarrow +\infty$. In this first part of our work, our analysis follows closely the one carried out in \cite{monsaingeon2021new}, where the exchange of mass and different dynamics occur on the boundary of $\Omega$. However, due to the introduction of mobilities and since the preferential path can be located arbitrarily in $\Omega$, many arguments differ from \cite{monsaingeon2021new}. In particular, contrarily to \cite{monsaingeon2021new} where a duality approach is used, we rely on the direct method of Calculus of Variations that allows to handle the usage of mobilities.

In the second part of this work, we extend the problem by making the shape of $\Gamma$ part of the optimization problem. One of the main challenges of this approach is that, in order to define a continuity equation on $\Gamma$, the curve $\gamma$ needs to be regular and non self-intersecting. As a consequence, our variational problem should have specific compactness properties that enforce the minimizing curve $\gamma_{\rm min}$ to be regular and non self-intersecting. To this end, we augment the action functional by adding a regularization term $R : C^1([0,1];\Omega) \rightarrow [0,\infty]$ that includes the following terms. The first term is the so called Tangent-Point energy \cite{STRZELECKI_2012} of the curve $\gamma$ defined as 
\begin{align}
    E^p_{\rm tp}(\gamma) = \int_0^{1} \int_0^{1} \frac{|\gamma'(s)||\gamma'(t)|}{r_{\rm tp}^{\gamma}(\gamma(s), \gamma(t))^p} \, ds dt
\end{align}
where $r_{\rm tp}^{\gamma}(x,y)$ denotes the radius of the smallest sphere tangent to $x\in\Gamma$ and passing through $x,y\in\Gamma$, see \autoref{fig:TangentPointCircle}. 
The Tangent-Point energy was originally defined for closed curves or knots with the goal of characterizing their self-avoidance properties \cite{STRZELECKI_2012}. 
Following the same principle, we apply the Tangent-Point energy to open curves, ensuring that limits of minimizing sequences of the augmented variational problem are one dimensional embedded submanifolds in $\Omega$ with boundary and thus parametrized by a non self-intersecting curve.
We also show that such an energy has good compactness properties. Indeed, by adapting the results from \cite{Blatt2013} and \cite{blatt_regularity_2012} that hold for closed curves, we show that, under arclength parametrization, the uniform bound on $E^p_{\rm tp}$ implies a uniform bound in $W^{2-1/p,p}([0,1];\Omega)$, compactly embedding in $C^1([0,1];\Omega)$. 
\begin{figure}
    \centering
    \begin{tikzpicture}
        \draw[domain=-1:1.3,smooth,variable=\x] plot ({\x},{\x^3 - \x}) node[right] {$\gamma(t) = t^3 - t$};
        \draw[blue] (1/2,1/2) circle[radius=sqrt(2)/2];
        \draw (-.5,.5) -- (.5,-.5);
        \fill (0,0) circle(1pt) node[below left] {$(0,0)$};
        \fill (1,0) circle(1pt) node[right] {$(1,0)$};
        \end{tikzpicture}
    \caption{Circle tangent to the image of $\gamma(t)=t^3-t$ at $t=0$ and passing through $(t,\gamma(t))$ for $t=1$ }
    \label{fig:TangentPointCircle}
\end{figure}
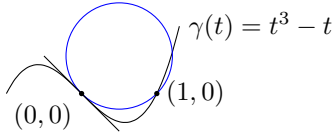
Relying on such results we augment the action functional by the additional regularization
\begin{align}\label{eq:}
    R (\gamma) := 2^{-p}E^p_{\rm tp}(\gamma) -\log|\gamma(0) - \gamma(1)| + \int_0^{1} |\gamma'(t)| \, dt.
\end{align}
The functional $R$ is a combination of the Tangent-Point energy and two additional terms. The second one is added for technical reasons, since our model is defined for open curves allowing for normal flows at the end points. Other modelling options, such as extending the problem to closed curves would be possible. The third term is instead natural and necessary to avoid minimizing sequences of curves with unbounded length. By augmenting the action functional \eqref{eq:opt_fixed_intro} with the regularization $R$ we are able to prove, for $p>2$, existence of an optimal curve $\gamma$ and optimal time-dependent measures $(J_t, V_t, \rho_t, \mu_t, f_t) \in {\rm CE}(\Gamma)$ that jointly minimize \eqref{eq:opt_fixed_intro}.

In the third part of the work, we develop a numerical scheme to compute both the time-dependent measures $(J_t, V_t, \rho_t, \mu_t, f_t) \in {\rm CE}(\Gamma)$ as solution of \eqref{eq:opt_fixed_intro} for a fixed curve $\gamma$ as well as for an optimal curve $\gamma$ that is then part of the optimization problem. In particular, we rely on an augmented Lagrangian formulation of the problem. 

\subsection*{Contributions and outline}
The main contributions of our work are the following:
\begin{itemize}
\item Provide a rigorous formulation of a set of coupled continuity equations for bulk-curve transport on arbitrary, regular curves of co-dimension at least one (Definition \ref{def:conteq}).
\item Establish existence of minimizers for the dynamic transport cost problems introduced above, both for a fixed (Theorem \ref{ExistenceFixedGamma}) as well as for a varying curve (Theorem \ref{thm:compn}).
\item Study certain limits when cost parameters in the action tend to zero or infinity (Section \ref{ParamLim}).
\item Provide a numerical scheme based on the combination of a finite element discretization and a primal-dual optimization algorithm and demonstrate its abilities via several examples (Section \ref{Numerics}).
\end{itemize}
The remainder of the paper is structured as follows. In Section \ref{sec:FixedPath}, we analyse the case of a fixed preferential path. We introduce the notion of weak solutions to the coupled system of continuity equations in Subsection \ref{sec:conn} as well as the rigorous formulation of the dynamic problem in Subsection \ref{sec:dynamicformulation} and show well-posedness in Subsection \ref{sec:WellPsdnFP}. Section \ref{ParamLim} is devoted to the study of limits of cost-parameters tending to zero or infinity. These limits establish a connection between the coupled problem and classical (uncoupled) Optimal Transport with mobilities. The minimization problem with varying paths is analysed in Section \ref{OptPrefPath}, including the study of the Tangent-Point energy for non-closed curves in Subsection \ref{TPEcurves}, the proof of existence of minimizers in Subsection \ref{WellPsdnVP} and a discussion of linear mobilities in Subsection \ref{LinMobilities}. In Section \ref{Numerics}, we conclude by introducing a numerical algorithm that is applied it to several examples.

\subsection{Further related work}

\paragraph*{Dynamic optimal transport with mass exchange}
The basis for our formulation of the transport problem is the dynamic formulation introduced by Benamou and Brenier in \cite{benamou2000computational}. The extension of this formalism in order to implement mass exchange became popular in the context of unbalanced transport, \cite{chizat2018unbalanced, liero2018optimal, kondratyev2016new}. There, an additional reaction term added to the continuity equation allowed for a change of mass over time. Closer to our work is the approach of \cite{monsaingeon2021new}, which already included the coupling of a bulk domain with a lower-dimensional one, in this case the boundary of the domain. In this work, the author includes an exchange mechanism of mass between the two that is based on a non-homogeneous flux boundary condition in the bulk and a related reaction term on the boundary. This idea was later applied to metric graphs with reservoirs that may carry mass, \cite{BurgerHumpertPietschmann2023, Fazeny2024}. Contrary to our approach, both \cite{monsaingeon2021new} and \cite{BurgerHumpertPietschmann2023} establish existence of minimizers via convex duality instead of the direct method. The structure of gradient flows on metric graphs with mass exchange at the vertices was recently explored in \cite{Heinze2024}.

\paragraph*{Non-linear mobilities}
The first study of dynamic transport distances with non-linear mobilities is due to \cite{Dolbeault_2008}, where power-like concave mobilities were introduced. This was later extended to mobilities with unbounded domains, \cite{Carrillo2010}, as well as volume filling mobilities in \cite{Lisini2010}. Convexity properties along geodesics in the presence of mobilities have been analyzed in \cite{Carrillo2010}.

\paragraph*{Tangent point energies}\label{DiscTPE}
The problem of self-avoidance of closed curves is well known in knot theory and several functionals preventing self-intersections have already been studied.
Some standard choices include Möbius energies introduced by O'Hara in \cite{ohara_family_1992} or Ropelength defined as the ratio of length over thickness of a curve \cite{gonzalez_global_1999}. 
However, these functionals are numerically difficult to handle and only few schemes have been investigated, e.g. \cite{walker_shape_2016} in the context of shape optimization. They either contain the evaluation of several suprema (in the case of Ropelength) or they need geodesic distances and allow for almost intersecting curves (in the case of the Möbius energies). 
Therefore, finding energies "in between" is of interest.
In \cite{gonzalez_global_1999}, the authors proposed a new family of functionals called Tangent-Point energies. They can be represented using an integral formulation without the evaluation of geodesic distances. Moreover, no supremum is needed in the definition making it a good candidate for numerical applications. 

In recent years there has been interest in the analytical properties of Tangent-Point energies, see \cite{STRZELECKI_2012} and \cite{Blatt2013}, \cite{blatt_regularity_2012}. It turns out that curves with finite Tangent-Point energy gain $C^{1,\alpha}$-regularity for $\alpha\in(0,1)$ and their images define embedded one dimensional submanifolds.

\paragraph*{Numerical algorithms}
There exists a plephora of algorithms that is based on the dynamic formulation of optimal transport, notably even the main focus of the original work by Benamou and Brenier \cite{benamou2000computational} was the design of an augmented Lagrangian (ALG) approach that allows to enforce the continuity equation constraint. More recently, the ALG approach has been adopted to finite volumes in \cite{Cances2020_FVSchemeJKO}.
More relevant for this work are proximal splitting algorithms which were used in \cite{Papadakis2014}, combined with staggered grid discretizations to enforce the continuity equation constraints.
For our work, we extend the algorithm introduced in \cite{benamou2000computational} to the bulk-curve setting using a finite element discretization of space-time domains introduced in \cite{fu_high_2023}. 
Finally, we mention a related approach due to \cite{Carrillo2022_PrimalDual} which employs piecewise constant approximations and proximal splitting algorithms to discretize the JKO scheme. However, in \cite{Carrillo2022_PrimalDual}, the continuity equation constraints are enforced only in a relaxed sense. The resulting formulation then involves the minimization of a sum of three convex functionals, which is done by the algorithm developed in \cite{Yan2018}.

As for the numerical treatment of the tangent point energy, \cite{bartels_simple_2018} and \cite{bartels_stability_2018} introduced a scheme treating it as a penalty term when minimizing the elastic bending energy of a curve. In \cite{yu_repulsive_2021} a gradient descent method using Sobolev-Slobodeckij inner products has been proposed and applied to different problems like curve packing or graph drawing to avoid self-collision. Due to the presence of additional curve-dependent terms in the minimization, we employ a gradient descent method, numerically approximating the gradient by finite differences.

\subsection{Notation}\label{Notation}

In what follows, the term \emph{non self-intersecting} is used to denote injective curves.
Given $\Omega$ a compact subset of $\R^d$ and $\Gamma = \gamma([0,1])$ the image of a curve of class $C^1([0,1];\Omega)$ contained in $\Omega$ which is non self-intersecting and regular, i.e. $\vert\gamma'\vert\neq 0$, we define
\begin{align*}
    X_\Omega = [0,1] \times \Omega, \quad X_\Gamma = [0,1] \times \Gamma.
\end{align*}
For a Euclidean space $A$ we denote by $M(A)$, $M_+(A)$ and $P(A)$ the set of Borel measures, non-negative Borel measures and probability measures on $A$. In the vector-valued case, for $N \in \mathbb{N}$, we employ the notation $M(A,\R^N)$ and $M_+(A,\R^N)$, respectively. The corresponding total variation norm is denoted by $\|\cdot\|_{M(A)}$ or $\|\cdot\|_{M(A, \R^N)}$.
Further, we denote by $M_{[0,1]}(A)$ a Borel family of measures in $M(A)$ indexed by $t \in [0,1]$ and $M_{[0,1]}(A, \R^N)$ in the vector-valued case.
Let $T_p\Gamma$ be the tangent space of $\Gamma$ at the point $p\in\Gamma$. We represent any measure $V \in M(\Gamma,T_p\Gamma)$ as 
\begin{align*}
  V = \tau_\Gamma(p) \mathcal{V}
\end{align*}
where $\tau_\Gamma(p)$ is the tangent vector to $\Gamma$ in $p \in \Gamma$ and $\mathcal{V}  \in M(\Gamma, \R)$. Additionally, we represent any measure $G^i \in M(\{\gamma(i)\}, T_{\gamma(i)} \Gamma)$ as
\begin{align*}
   G^i = \tau_{\Gamma}(\gamma(i)) \mathcal{G}^i 
\end{align*}
where $\mathcal{G}^i \in  M(\{\gamma(i)\}, \R)$ for $i=0,1$.
We will consider time-dependent families of measures on $\Omega$ and $\Gamma$ together with their momentums. In particular, we summarize here the different objects relevant in our analysis: 
\begin{itemize}
    \item[i)] $\rho_t \in M_{[0,1]}(\Omega)$ measures the flow of observables moving in $\Omega$ and $J_t \in M_{[0,1]}(\Omega,\R^d)$ its momentum.
    \item[ii)] $\mu_t \in M_{[0,1]}(\Gamma)$ measures the flow of observables moving in $\Gamma$ (with a different dynamic) and $V_t \in M_{[0,1]}(\Gamma,T_p\Gamma)$ its momentum.
    \item[iii)] $f_t \in M_{[0,1]}(\Gamma)$ measures the normal exchange rate of mass between $\Omega$ and $\Gamma$.
    \item[iv)] $G_t^0 \in M_{[0,1]}(\{\gamma(0)\},T_{\gamma(0)}\Gamma)$ and $G_t^1 \in M_{[0,1]}(\{\gamma(1)\},T_{\gamma(1)}\Gamma)$ measure the tangential exchange of mass between $\Omega$ and $\Gamma$ at the boundary of $\Gamma$.
\end{itemize}

\begin{remark}
Even if it holds that  $\mathcal{G}^i = c_i\delta_{\gamma(i)}$ and therefore we could identify it with the scalar $c_i \in \R$, we will prefer to keep the notation $\mathcal{G}^i \in  M(\{\gamma(i)\}, \R)$, since we believe that it helps better identifying the physical meaning of the quantities. 
\end{remark}

We also need extensions of measures defined in the following way.

\begin{definition}[Extension of measures]
\label{def:ex}
For $N \in \mathbb{N}_0$ and given $\mu \in M(\Gamma, \R^N)$ we define its zero extension to $\Omega$, denoted by $\bar \mu \in M(\Omega, \R^N)$ as
\begin{align*}
\bar \mu (\varphi) = \int_\Gamma \varphi \big|_{\Gamma} d \mu 
\end{align*}
for every $\varphi \in C(\Omega)$. Moreover, given $V \in M(\Gamma, T_p\Gamma)$ we define its zero extension (in $\Omega$ and in the normal direction) $\bar V \in M(\Omega, \R^d)$ as $\bar V = \tau_\Gamma(p) \bar{\mathcal{V}}$.
\end{definition}

This allows us to define the space of admissible tuples
$(J_t,V_t,\rho_t,\mu_t,f_t,G_t^0,G^1_t)$ as 
\begin{align*}
    \mathcal{D}_{\rm adm}(\Gamma)= M_{[0,1]}(\Omega,\R^d) \times M_{[0,1]}(\Gamma,T_p\Gamma) \times M_{[0,1]}&(\Omega) \times M_{[0,1]}(\Gamma) \times 
    M_{[0,1]}(\Gamma) \\
    & \times M_{[0,1]}(\{\gamma(0)\}, T_{\gamma(0)} \Gamma) \times M_{[0,1]}(\{\gamma(1)\}, T_{\gamma(1)} \Gamma)
\end{align*}
and the set of admissible initial measures as 
\begin{align}
    \mathcal{P}_{\rm adm}(\Gamma) = \{(\rho,\mu) \in M_+(\Omega) \times M_+(\Gamma) : \rho + \bar \mu \in P(\Omega)\}.
\end{align}

\subsection{Sobolev-Slobodeckij spaces}

In order to minimize over the preferential path in Section~\ref{OptPrefPath}, we need compactness in a suitable normed space. This space will turn out to be $C^{1,\alpha}([0,1];\R^d)$ for $\alpha\in (0,1)$. The compactness however will follow by a compact embedding from Sobolev-Slobodeckij spaces, also called fractional Sobolev spaces, into these Hölder spaces. Following \cite{DINEZZA2012521}, we give a short introduction to Sobolev-Slobodeckij spaces and state the necessary embedding theorem. 

\begin{definition}[Sobolev-Slobodeckij spaces]
    Let $U\subset \R^d$ be open and let $s\in(0,1)$ and $p\in[1,+\infty)$ be given. For $f:U\to\R$ we define the \emph{Gagliardo seminorm}
    \begin{align*}
        [f]^p_{W^{s,p}(U)} := \int\limits_U \int\limits_U \frac{| f(x) - f(y) |^p}{| x - y|^{d+sp}}\, dx dy
        \intertext{and the corresponding norm by}
        \| f\|^p_{W^{s,p}(U)} := \|f \|^p_{L^p(U)} + [f]^p_{W^{s,p}(U)}.
    \end{align*}
    The \emph{Sobolev-Slobodeckij space $W^{s,p}(U)$} is given as 
    \begin{align*}
        W^{s,p}(U) := \{ f \in L^p(U) \ : \ \|f\|_{W^{s,p}(U)} < +\infty \}
    \end{align*}
    which is a Banach space for the norm defined above. 
\end{definition}

These spaces act as an intermediate space between $L^p(U)$ and $W^{1,p}(U)$ and can be extended to arbitrary $k+s$ for $k\in\mathbb{N}_0$ and $s\in(0,1)$. 

\begin{definition}
    Let $U\subset \R^d$ be open and let $s\in(0,1)$, $k\in\mathbb{N}_0$ and $p\in[1,+\infty)$ be given. The space 
    \begin{align*}
        W^{k+s,p}(U) := \{f\in W^{k,p}(U) \ : \ D^{\vec{\alpha}} f \in W^{s,p}(U) \text{ for all } \vec{\alpha}\in\mathbb{N}^d \text{ s.t. } |\vec{\alpha}|=k \}
    \end{align*}
    equipped with the norm
    \begin{align*}
        \| f \|_{W^{k+s,p}(U)} := \left(\|f\|^p_{W^{k,p}(U)} + \sum_{|\vec{\alpha}|=k} \left[ D^{\vec{\alpha}} f \right]^p_{W^{s,p}(U)}\right)^\frac{1}{p}
    \end{align*}
    is a Banach space.
\end{definition}

If the mapping is vector-valued instead, the space is defined component-wise. 
The following embedding theorem holds true by \cite[Theorem 4.57, Theorem 4.58]{demengel_functional_2012}.

\begin{theorem}\label{CmpctEmbedding}
    Let $U\subset\R$ be a given domain with Lipschitz boundary. For $p\in[1,+\infty)$, $k\in\mathbb{N}_0$ and $s\in(0,1)$ such that $kp,sp > 1$ there exists a constant $C(k,s,p,U)>0$ with
    \begin{align*}
        \|f\|_{C^{k,\alpha}(U)} := \|f\|_{C^k(U)} + \sum\limits_{|\vec{\alpha}|=k}\sup\limits_{x\neq y} \frac{|D^{\vec{\alpha}}f(x) - D^{\vec{\alpha}}f(y)|}{|x-y|^\alpha} \leq C(k,s,p,U) \|f\|_{W^{k+s,p}(U)} 
    \end{align*}
    for $\alpha = s-\frac{1}{p}$ and $f\in W^{k+s,p}(U)$ and the embedding $W^{k+s,p}(U) \to C^{k,\alpha'}(U)$ is compact for $\alpha'<\alpha$.
\end{theorem}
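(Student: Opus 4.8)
The plan is to derive the $C^{k,\alpha}$ estimate by a splitting argument — handling the derivatives of order exactly $k$ with the fractional Morrey inequality and the lower‑order derivatives with the classical one — and then to obtain the compact embedding from Arzel\`a--Ascoli. First I would invoke \cite[Theorem 8.2]{DINEZZA2012521}: since $U$ is a Lipschitz domain, it is a $W^{s,p}$‑extension domain, so the hypothesis $s\in(0,1)$, $sp>d$ gives a constant $C_0=C_0(s,p,U)$ with
\begin{align*}
    \|g\|_{L^\infty(U)} + \sup_{x\neq y}\frac{|g(x)-g(y)|}{|x-y|^{s-d/p}} \le C_0\,\|g\|_{W^{s,p}(U)}\qquad\text{for all } g\in W^{s,p}(U).
\end{align*}
Observe that $s<1$ together with $sp>d$ forces $p>d$, so that $kp>d$ is equivalent to $k\ge 1$, and the classical Morrey embedding $W^{1,p}(U)\hookrightarrow C^{0,1-d/p}(U)$ is available on the Lipschitz domain $U$; iterating it yields $W^{j,p}(U)\hookrightarrow C^{j-1,1-d/p}(U)$ for every $1\le j\le k$, with constant depending only on $j,p,U$. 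Since $U$ is bounded and $\alpha=s-d/p<1-d/p$, we also have the continuous inclusion $C^{0,1-d/p}(U)\hookrightarrow C^{0,\alpha}(U)$.

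Next I would assemble the bound. Fix $f\in W^{k+s,p}(U)$. For each multi‑index $\beta$ with $|\beta|\le k-1$, the iterated classical embedding gives $\|D^\beta f\|_{C^{0,\alpha}(U)}\le C\,\|f\|_{W^{k,p}(U)}\le C\,\|f\|_{W^{k+s,p}(U)}$, using that the $W^{k+s,p}$‑norm dominates the $W^{k,p}$‑norm. For each $\beta$ with $|\beta|=k$, the definition of $W^{k+s,p}(U)$ ensures $D^\beta f\in W^{s,p}(U)$ with $\|D^\beta f\|_{W^{s,p}(U)}\le\|f\|_{W^{k+s,p}(U)}$, so the displayed fractional inequality gives $\|D^\beta f\|_{C^{0,\alpha}(U)}\le C_0\,\|f\|_{W^{k+s,p}(U)}$. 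Summing over the finitely many multi‑indices of order at most $k$ produces the asserted estimate $\|f\|_{C^{k,\alpha}(U)}\le C\,\|f\|_{W^{k+s,p}(U)}$ (with $\|\cdot\|_{C^{k,\alpha}}$ understood to control all derivatives up to order $k$). The vector‑valued case follows by applying the scalar estimate componentwise.

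Finally, for the compactness statement, let $(f_n)$ be bounded in $W^{k+s,p}(U)$; by the estimate just proved it is bounded in $C^{k,\alpha}(U)$, so for every $\beta$ with $|\beta|\le k$ the family $(D^\beta f_n)_n$ is equibounded and uniformly $\alpha$‑H\"older, hence equicontinuous on the bounded set $U$. Applying Arzel\`a--Ascoli successively to the finitely many multi‑indices $\beta$ with $|\beta|\le k$ and passing to a diagonal subsequence, $D^\beta f_n$ converges uniformly for each such $\beta$; the uniform limits are compatible (a uniform limit of derivatives is the derivative of the uniform limit of the functions), so the subsequence converges in $C^{k}(U)$, which is exactly compactness of $W^{k+s,p}(U)\hookrightarrow C^{k}(U)$. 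I do not expect a genuine obstacle here: the result is classical, and the only points demanding care are the bookkeeping over mixed partial derivatives in the splitting step and the verification that a Lipschitz domain is an extension domain for $W^{s,p}$ so that \cite[Theorem 8.2]{DINEZZA2012521} applies — both standard facts.
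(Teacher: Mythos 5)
Your argument is correct and is exactly the route the paper takes: the paper gives no proof beyond citing \cite[Theorem 8.2]{DINEZZA2012521} for the fractional Morrey inequality combined with the classical Morrey embedding, and your write-up simply makes explicit the splitting between the order-$k$ derivatives (fractional case) and the lower-order ones (classical case), plus the standard Arzel\`a--Ascoli/diagonal argument for compactness on the bounded domain $U$. No gaps; the only remark is that the paper's displayed $C^{k,\alpha}$-norm omits the derivative terms, and your reading of it as controlling all derivatives up to order $k$ is the intended (and needed) one.
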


For maps $f:U\to\R^N$ we obtain similar embeddings by considering each component separately.
The compactness of the embedding will be crucial for showing existence of minimizers for varying paths.

\section{Fixed preferential path}\label{sec:FixedPath}

In this section we consider a (fixed) preferential path $\gamma : [0,1] \rightarrow \Omega$, regular, non self-intersecting and of class $C^1([0,1];\Omega)$ (in short $C^1$-curve) with image denoted by $\Gamma \subset \R^d$.
We aim to model the dynamic of a population under the assumption that travelling on the preferential path $\Gamma$ is cheaper; however, accessing (or leaving) the preferential path is associated with an additional cost. Such a modelling choice can be justified by economic reasons (the highway is expensive to take) or time reasons (there is often much more traffic to access or leave the highway, for example in Italian highways).

\subsection{Continuity equations and mass exchange}\label{sec:conn}

In this section we define rigorously the coupled continuity equations anticipated in the introduction. Since we will prefer to define the continuity equation on $\Gamma$ using its parametrization, we will need the notion of push-forward of measures by $\gamma^{-1}$ defined as follows.

\begin{definition}\label{def:push}
Given a non self-intersecting curve $\gamma : [0,1] \rightarrow \Omega$ with image $\Gamma$ and a measure $\nu \in M(\Gamma, \mathbb{R}^N)$ for $N \in \mathbb{N}$, we define $\nu^\gamma \in M([0,1], \mathbb{R}^N)$ as 
\begin{align*}
    \nu^\gamma =  \gamma^{-1}_{\#} \nu.
\end{align*}
\end{definition}
We are now ready to state the continuity equations that regulate the exchange of mass between $\Omega$ and $\Gamma$.  

\begin{definition}[Coupled continuity equations]\label{def:conteq}
Given a regular, non self-intersecting $C^1$-curve $\gamma : [0,1] \rightarrow \Omega$ with image $\Gamma$  we say that $(J_t,V_t,\rho_t,\mu_t,f_t,G^0_t,G_t^1) \in  \mathcal{D}_{\rm adm}(\Gamma)$ satisfies the coupled  continuity equations with initial and final data $(\rho_0,\mu_0) \in \mathcal{P}_{\rm adm}(\Gamma)$ and $(\rho_1,\mu_1) \in \mathcal{P}_{\rm adm}(\Gamma)$ if 
\begin{align}\label{eq:cont1}
\int_0^1\int_{\Omega} \partial_t & \varphi(t,x)\, d\rho_t dt  + \int_0^1\int_{\Omega} \nabla \varphi(t,x) \cdot dJ_t dt - \int_0^1\int_\Gamma \varphi(t,x)\, df_t dt \nonumber\\
& - \int_0^1 \int_{\{\gamma(0)\}} \varphi(t,x)\,d \mathcal{G}_t^0 dt + \int_0^1 \int_{\{\gamma(1)\}} \varphi(t,x)\, \,d \mathcal{G}_t^1 dt = \int_\Omega \varphi(1,x) d\rho_1 - \int_\Omega \varphi(0,x) d\rho_0
\end{align}
for every $\varphi \in C^1(X_\Omega)$ and     
\begin{align}\label{eq:cont2}
\int_0^1& \int_{0}^1 \partial_t \psi(t,s)  \, d\mu_t^\gamma dt  + \int_0^1\int_0^1 \partial_s \psi(t,s) |\gamma'(s)|^{-1} \, d\mathcal{V}_t^\gamma dt + \int_0^1\int_0^1 \psi(t,s)\, df_t^\gamma dt \nonumber \\
& + \int_0^1 \int_{\{0\}
}\psi(t,s)\,d (\mathcal{G}_t^0)^\gamma dt - \int_0^1 \int_{\{1\}}\psi(t,s)\,d (\mathcal{G}_t^1)^\gamma dt= \int_0^1 \psi(1,s) d\mu^\gamma_{1} - \int_0^1 \psi(0,s) d\mu^\gamma_{0}
\end{align}
for every $\psi \in C^1([0,1]^2)$. We refer to such conditions as ${\rm CE}(\Gamma)$ and if $(J_t,V_t,\rho_t,\mu_t,f_t,G^0_t,G^1_t)$ satisfies ${\rm CE}(\Gamma)$ we write $(J_t,V_t,\rho_t,\mu_t,f_t,G^0_t,G^1_t) \in {\rm CE}(\Gamma)$, where we keep implicit the dependence on initial and final data.
\end{definition}

\begin{remark}
We point out that we are allowing mass to be exchanged tangentially at the boundary of $\Gamma$ at rate $\mathcal{G}^0$ (entering $\Gamma$) and $\mathcal{G}^1$ (exiting $\Gamma$). Moreover, we also note that Definition \ref{def:conteq} is independent of the choice of the injective parametrization for $\Gamma$ but it might reverse the interpretation of $\mathcal{G}^0$ and $\mathcal{G}^1$. This justifies the notation ${\rm CE}(\Gamma)$. Moreover it holds that
\begin{align}\label{eq:geometry}
    \int_0^1\int_0^1 \partial_s \psi(t,s) |\gamma'(s)|^{-1} \, d\mathcal{V}_t^\gamma dt = \int_0^1\int_\Gamma \nabla_\Gamma \psi(t,p) \cdot dV_t dt
\end{align}
where $\nabla_\Gamma$ is the gradient of the function $x \mapsto \psi(t,x)$ on the embedded manifold $\Gamma$. This motivates that Definition \ref{def:conteq} is nothing but the parametrized version of \eqref{eq:CE_intro}. The identity \eqref{eq:geometry} can be easily justified noticing that a local base of the tangent plane of $\Gamma$ in $p \in \Gamma$ is given by $\frac{\partial}{\partial x}\vert_p = \gamma'(\gamma^{-1}(p))$. Therefore from the definition of the Riemannian gradient we obtain
\begin{align}
\nabla_\Gamma \psi(t,p) = g\left(\frac{\partial}{\partial x}\vert_p, \frac{\partial}{\partial x}\vert_p\right)^{-1}\partial_s \psi(t,s)\vert_{\gamma^{-1}(p)} \frac{\partial}{\partial x}\vert_p 
\end{align}
implying that $\nabla_\Gamma \psi(t,p) \cdot \tau_\Gamma(p) = \partial_s \psi(t,s)\vert_{\gamma^{-1}(p)} |\gamma'(\gamma^{-1}(p))|^{-1}$ and thus \eqref{eq:geometry}.
\end{remark}

Under the previous coupled continuity equations, it is possible to prove that the total mass is conserved and more specifically the total density satisfies a global continuity equation.

\begin{proposition}
Given a regular, non self-intersecting $C^1$-curve $\gamma : [0,1] \rightarrow \Omega$ with image $\Gamma$, suppose that $(J_t,V_t,\rho_t,\mu_t,f_t,G^0_t,G^1_t) \in {\rm CE}(\Gamma)$. Denoting by $\eta_t = \rho_t + \bar \mu_t$ and $W_t = J_t + \bar V_t$ it holds that 
    \begin{align}
        \partial_t \eta_t + \nabla \cdot W_t = 0 \quad \text{in } \ X_{\Omega}
    \end{align}
    distributionally with initial value $\rho_0 + \bar \mu_0$ and final value $\rho_1 + \bar \mu_1$.
\end{proposition}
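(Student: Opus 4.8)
The strategy is to test the global continuity equation against an arbitrary $\varphi \in C^1(X_\Omega)$ and derive it by combining the two coupled equations \eqref{eq:cont1} and \eqref{eq:cont2}. We want to show
\begin{align*}
\int_0^1 \int_\Omega \partial_t \varphi \, d\eta_t \, dt + \int_0^1 \int_\Omega \nabla \varphi \cdot dW_t \, dt = \int_\Omega \varphi(1,\cdot)\, d\eta_1 - \int_\Omega \varphi(0,\cdot)\, d\eta_0.
\end{align*}
First I would unfold the definitions $\eta_t = \rho_t + \bar\mu_t$ and $W_t = J_t + \bar V_t$ using the extension operator from Definition~\ref{def:ex}, which replaces integrals of $\bar\mu_t$ over $\Omega$ by integrals of $\varphi|_\Gamma$ over $\Gamma$, and likewise $\int_\Omega \nabla\varphi \cdot d\bar V_t = \int_\Gamma (\nabla\varphi|_\Gamma \cdot \tau_\Gamma)\, d\mathcal{V}_t$. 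The term $\int_0^1 \int_\Omega \partial_t\varphi\, d\rho_t\, dt + \int_0^1 \int_\Omega \nabla\varphi \cdot dJ_t\, dt$ together with the boundary terms $\int_\Omega \varphi(1,\cdot)d\rho_1 - \int_\Omega \varphi(0,\cdot)d\rho_0$ is handled directly by \eqref{eq:cont1}, which leaves the $f_t$ and $\mathcal{G}^0_t, \mathcal{G}^1_t$ terms as leftovers.

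The key step is then to choose the test function in \eqref{eq:cont2} appropriately: I would take $\psi(t,s) := \varphi(t,\gamma(s))$, which is $C^1$ on $[0,1]^2$ since $\gamma \in C^1$ and $\varphi \in C^1(X_\Omega)$. With this choice, $\partial_t \psi(t,s) = \partial_t\varphi(t,\gamma(s))$ so that $\int_0^1\int_0^1 \partial_t\psi\, d\mu_t^\gamma\, dt = \int_0^1 \int_\Gamma \partial_t\varphi\, d\mu_t\, dt$ by the definition of the pushforward $\mu_t^\gamma = \gamma^{-1}_\#\mu_t$, and similarly the right-hand side becomes $\int_\Gamma \varphi(1,\cdot)d\mu_1 - \int_\Gamma \varphi(0,\cdot)d\mu_0$, matching the $\bar\mu$ contributions to the boundary terms of the global equation. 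The spatial term needs the chain rule: $\partial_s\psi(t,s) = \nabla\varphi(t,\gamma(s))\cdot\gamma'(s)$, hence $\partial_s\psi(t,s)|\gamma'(s)|^{-1} = \nabla\varphi(t,\gamma(s))\cdot\tau_\Gamma(\gamma(s))$, and using identity \eqref{eq:geometry} (or directly the pushforward definition) this term equals $\int_0^1\int_\Gamma \nabla\varphi\cdot dV_t\, dt$, which is exactly $\int_0^1\int_\Omega\nabla\varphi\cdot d\bar V_t\, dt$. Finally, the coupling terms $\int_0^1\int_0^1 \psi\, df_t^\gamma\, dt = \int_0^1\int_\Gamma\varphi\, df_t\, dt$ and the endpoint terms $\int_0^1\int_{\{0\}}\psi(t,0)d(\mathcal{G}^0_t)^\gamma dt - \int_0^1\int_{\{1\}}\psi(t,1)d(\mathcal{G}^1_t)^\gamma dt = \int_0^1\int_{\{\gamma(0)\}}\varphi\, d\mathcal{G}^0_t\, dt - \int_0^1\int_{\{\gamma(1)\}}\varphi\, d\mathcal{G}^1_t\, dt$ in \eqref{eq:cont2} appear with signs exactly opposite to the corresponding leftover terms in \eqref{eq:cont1}.

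Adding \eqref{eq:cont1} and \eqref{eq:cont2} (with $\psi = \varphi\circ(\mathrm{id}\times\gamma)$) then causes all the $f_t$, $\mathcal{G}^0_t$, $\mathcal{G}^1_t$ contributions to cancel in pairs, and the surviving terms assemble precisely into the weak formulation of $\partial_t\eta_t + \nabla\cdot W_t = 0$ with the claimed initial and final data. Since $\varphi$ was arbitrary in $C^1(X_\Omega)$, the distributional identity follows. I do not anticipate a serious obstacle here; the only points requiring a little care are verifying that $\psi = \varphi\circ(\mathrm{id}\times\gamma)$ is admissible (i.e.\ genuinely $C^1$, which uses $\gamma\in C^1$), checking the sign bookkeeping between the two continuity equations, and confirming that the pushforward identities such as $\int_0^1\int_{\Gamma}\varphi|_\Gamma\, df_t\, dt = \int_0^1\int_0^1 (\varphi(t,\cdot)\circ\gamma)\, df_t^\gamma\, dt$ hold — all of which are immediate from Definitions~\ref{def:ex}, \ref{def:push} and the change-of-variables formula for pushforwards.
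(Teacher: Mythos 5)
Your proposal is correct and follows essentially the same route as the paper's proof: test with an arbitrary $\varphi \in C^1(X_\Omega)$, substitute $\psi(t,s) = \varphi(t,\gamma(s))$ into \eqref{eq:cont2}, use the chain rule $\partial_s\psi = \nabla\varphi\cdot\gamma'$ together with the push-forward identities to match the $\bar\mu_t$ and $\bar V_t$ contributions, and observe that the $f_t$, $\mathcal{G}^0_t$, $\mathcal{G}^1_t$ terms cancel in pairs between \eqref{eq:cont1} and \eqref{eq:cont2}. The sign bookkeeping you describe matches the paper exactly.
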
   
\begin{proof}
   Given an arbitrary test function $\varphi \in C^1(X_\Omega)$ and defining $\psi \in C^1([0,1]^2)$ as
\begin{align}
    \psi(t,s) = \varphi(t,\gamma(s))
\end{align}
it holds that $\partial_s \psi(t,s) = \nabla \varphi(t,\gamma(s)) \cdot \gamma'(s)$. Therefore, using the definition of push-forward, we obtain 
\begin{align*}
& \int_0^1\int_{\Omega} \partial_t \varphi(t,x)\, d\eta_t dt + \int_0^1\int_{\Omega} \nabla \varphi(t,x)\cdot dW_t dt  \\
& =  \int_0^1\int_{\Omega} \partial_t \varphi(t,x)\, d\rho_t dt +  \int_0^1\int_{\Omega} \partial_t \varphi(t,x)\, d\bar \mu_t dt + \int_0^1\int_{\Omega} \nabla \varphi(t,x) \cdot dJ_t dt +   \int_0^1\int_{\Omega} \nabla \varphi(t,x)\cdot d\bar V_t dt\\
& = \int_0^1\int_{\Gamma} \partial_t \varphi(t,x)\, d\mu_t dt + \int_0^1\int_{\Gamma} \nabla \varphi(t,x) \cdot \frac{\gamma'(\gamma^{-1}(x))}{|\gamma'(\gamma^{-1}(x))|} d \mathcal{V}_t dt+ \int_0^1\int_\Gamma \varphi(t,x)\, df_t dt \\
& \qquad + \int_\Omega \varphi(1,x) d\rho_1 - \int_\Omega \varphi(0,x) d\rho_0 + \int_0^1 \int_{\{\gamma(0)\}} \varphi(t,x)\,d \mathcal{G}_t^0 dt - \int_0^1 \int_{\{\gamma(1)\}} \varphi(t,x)\, \,d \mathcal{G}_t^1 dt\\
& = \int_0^1\int_{0}^1 \partial_t \psi(t,s)\, d \mu_t^\gamma dt + \int_0^1\int_{0}^1 \nabla  \varphi(t,\gamma(s)) \cdot \frac{\gamma'(s)}{|\gamma'(s)|}   d \mathcal{V}^\gamma_t dt + \int_0^1\int_{0}^1 \psi(t,s)\, df_t^\gamma dt \\ &\qquad + \int_\Omega \varphi(1,x) d\rho_1 - \int_\Omega \varphi(0,x) d\rho_0
 +  \int_0^1 \int_{\{0\}} \psi(t,s)\,d (\mathcal{G}_t^0)^\gamma dt - \int_0^1 \int_{\{1\}} \psi(t,s)\, \,d (\mathcal{G}_t^1)^\gamma dt\\
& = \int_\Gamma \psi(1,x) d\mu_1 - \int_\Gamma \psi(0,x) d\mu_0 + \int_\Omega \varphi(1,x) d\rho_1 - \int_\Omega \varphi(0,x) d\rho_0
\end{align*}
as we wanted to prove. 
\end{proof}

\subsection{Dynamic formulation}\label{sec:dynamicformulation}
In this section we define the variational problem that is regulating the evolution of the measures $(J_t,V_t,\rho_t,\mu_t,f_t,G^0_t,G^1_t) \in  \mathcal{D}_{\rm adm}(\Gamma)$ in time. Such measures should obey the coupled continuity equations ${\rm CE}(\Gamma)$ while minimizing an action functional.
In our case, it is constructed as a generalization of the kinetic energy due to the introduction of a mobility function, separately for the kinetic energy of $\rho$ on $\Omega$, the kinetic energy of $\mu$ on $\Gamma$ and the exchange of mass between $\Omega$ and $\Gamma$. 
We start with a definition of admissible mobility functions as introduced in \cite{Lisini2010}.
 \begin{definition} A function $m$ is called \emph{admissible mobility function} if  $m: [0,+\infty) \rightarrow [0,+\infty) \cup\{-\infty\}$ is an upper semicontinuous, concave function with $\domain(m)\coloneqq \{x\in[0,+\infty) : m(x) >-\infty \} =[a, b]$ for $a,b \geq 0$, $a<b$ and $m(z)>0$ for every $z \in (a, b)$. 

\end{definition}
\begin{remark}\label{rem:bound}
    The most prominent example of this class of mobilities is given by volume filling mobilities like $m(z)=z(1-z)$ defined in $[0,1]$ or more general $m(z) = (a-z)(b-z)$ defined in $[a,b]$ (in both cases extended to $-\infty$ for $z \notin [0,1]$ and $z \notin [a,b]$ respectively). These mobilities give rise to upper and lower bounds on the densities, implicitly imposing additional constraints. A more detailed discussion of mobilities can be found in e.g. \cite{Lisini2010}, \cite{Dolbeault_2008} or \cite{Carrillo2010}.
    Note in particular that for any admissible mobility $m$ it holds that 
    \begin{align}\label{eq:finite_sup}
        \sup_{z\in \rm dom(m)} m(z) <\infty.
    \end{align}
\end{remark}

We are now ready to introduce rigorously the dynamic variational problem, which is defined as a minimization of an action functional on tuples satisfying the coupled continuity equation.

\begin{definition}\label{def:adm_mobility}
Given a regular, non self-intersecting $C^1$-curve $\gamma : [0,1] \rightarrow \Omega$ with image $\Gamma$, we define the variational dynamic problem as 
    \begin{align}\label{eq:BB2}\tag{BB2}
& \inf_{(J_t, V_t, \rho_t, \mu, f, G_t^0, G_t^1) \in {\rm CE}(\Gamma)}  \int_0^1 \calA_\Gamma(J_t, \mathcal{V}_t, \rho_t, \mu_t, f_t, \mathcal{G}_t^0, \mathcal{G}_t^1)\, dt
\end{align}
where 
\begin{align}\label{eq:actionformula}
& \calA_\Gamma = \int_{\Omega}\Psi_\Omega\left(\frac{d\rho_t}{d\lambda_\Omega}, \frac{dJ_t}{d\lambda_\Omega} \right) \, d\lambda_\Omega 
+ \int_{\Gamma}\left[ \alpha_1\Psi_\Gamma\left(\frac{d\mu_t}{d\lambda_\Gamma},\frac{d\mathcal{V}_t}{d\lambda_\Gamma}\right) + \alpha_2\Psi_\Gamma\left(\frac{d\mu_t}{d\lambda_\Gamma}, \frac{df_t}{d\lambda_\Gamma}\right)\right]\,d\lambda_\Gamma\\
& \qquad + \alpha_3 \int_{\{\gamma(0)\} \cup \{\gamma(1)\}}\left[\Psi_\Gamma\left(\frac{d(\mu_t \mres\gamma(0))}{d\lambda_E},\frac{d\mathcal{G}_0}{d\lambda_E}\right) +\Psi_\Gamma\left(\frac{d(\mu_t \mres  \gamma(1))}{d\lambda_E},\frac{d\mathcal{G}_1}{d\lambda_E}\right)\right] d\lambda_E\nonumber
\end{align}
for $\alpha_1, \alpha_2, \alpha_3 > 0 $. In the previous definition $\lambda_\Omega \in M_+(\Omega)$ is any measure such that $\rho \ll \lambda_\Omega$ and $|J| \ll \lambda_\Omega$, $\lambda_\Gamma \in M_+(\Gamma)$ is any measure such that $\mu \ll \lambda_\Gamma$, $|\mathcal{V}_t| \ll \lambda_\Gamma$ and $f \ll \lambda_\Gamma$, and $\lambda_E \in M_+(\{\gamma(0)\} \cup \{\gamma(1)\})$ is any measure such that $\mu \mres  \gamma(i) \ll \lambda_E$ for $i=0,1$.  
Moreover, $\Psi_\Omega$ and $\Psi_\Gamma$ are action functionals defined as
\begin{align}
    \Psi_\Omega(z,w) = \left\{
    \begin{array}{cc}
         \frac{|w|^2}{m_\Omega(z)}& m_\Omega(z) \neq 0, z \in {\rm dom}(m_\Omega)  \\
         0 & w = 0 \text{ and } m_\Omega(z) = 0\\
         +\infty & (m_\Omega(z) = 0 \text{ and} \ w \neq 0) \text{ or } z \notin {\rm dom}(m_\Omega)
    \end{array}
    \right.
\end{align}
and
\begin{align}
    \Psi_\Gamma(z,w) = \left\{
    \begin{array}{cc}
         \frac{|w|^2}{m_\Gamma(z)}& m_\Gamma(z) \neq 0, z \in {\rm dom}(m_\Gamma)   \\
         0 & w = 0 \text{ and } m_\Gamma(z) = 0 \\
         +\infty &  (m_\Gamma(z) = 0 \text{ and} \ w \neq 0) \text{ or } z \notin {\rm dom}(m_\Gamma)
    \end{array}
    \right.
\end{align}
for admissible mobilities $m_\Omega$ and $m_\Gamma$ with domain bounded by $a_\Omega$, $b_\Omega$ and $a_\Gamma$, $b_\Gamma$ respectively.
\end{definition}

\begin{remark}
\label{rem:lsc}
     Owing to the results in \cite{Dolbeault_2008,Lisini2010} the functional $\calA$ defined in \eqref{eq:BB2} is convex and lower semicontinuous with respect to the narrow convergence of measures. In particular, for proving the narrow lower semicontinuity, we refer the reader to \cite[Lemma 3.9]{Dolbeault_2008} that can be applied to each term in \eqref{eq:actionformula} for a constant sequence of reference measures and using the fact that the narrow topology coincides with the weak one on compact domains.
\end{remark}

If $\calA_\Gamma$ is finite, we can conclude further regularity for $(J_t, V_t, \rho_t, \mu_t, f_t, G_t^0, G^1_t) \in {\rm CE}(\Gamma)$. 

\begin{proposition}\label{ExistenceDensities}

    Suppose that $\int_0^1 \calA_\Gamma(J_t,\mathcal{V}_t,\rho_t,\mu_t,f_t, \mathcal{G}^0_t, \mathcal{G}^1_t) dt< \infty$ for some $(J_t, V_t, \rho_t, \mu_t, f_t, G_t^0, G^1_t) \in {\rm CE}(\Gamma)$. Then $\rho_t$ and $\mu_t$  admit a narrowly continuous representative.
    Moreover, it holds that 
        \begin{align}
            a^\Omega \le \frac{d\rho_t}{d\lambda_\Omega} \le b^\Omega,\  \lambda_\Omega-a.e.,\qquad \text{and} \qquad  a^\Gamma \le \frac{d\mu_t}{d\lambda_\Gamma} \le  b^\Gamma, \ \lambda_\Gamma-a.e.
        \end{align}
        for every $t \in [0,1]$, where $(a^\Omega,b^\Omega)$ and $(a^\Gamma, b^\Gamma)$ are the respective constants for $m_\Omega$ and $m_\Gamma$ in Definition \ref{def:adm_mobility}. 
        \end{proposition}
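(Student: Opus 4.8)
The plan is to split the statement into two parts: the narrowly continuous representative for $\rho_t$ and $\mu_t$, and the pointwise bounds on their densities. For the continuous representative, I would argue as in the classical Benamou--Brenier theory (see \cite{AGS2008}): from $\int_0^1 \calA_\Gamma\, dt < \infty$ and the fact that each mobility is bounded (Remark \ref{rem:bound}), the momenta $J_t$, $V_t$, $f_t$, $\mathcal{G}_t^i$ have finite total mass after integration in time, and moreover by Cauchy--Schwarz the map $t \mapsto |J_t|(\Omega)$ (and similarly for the other momenta) is in $L^1(0,1)$, with $\bigl| \int_s^t \int_\Omega \nabla\varphi \cdot dJ_r\, dr \bigr|$ controlled by $\|\nabla \varphi\|_\infty$ times a quantity that is small when $|t-s|$ is small. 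Testing the weak continuity equation \eqref{eq:cont1} against functions of the form $\varphi(t,x) = \chi(t)\phi(x)$ then shows that $t \mapsto \int_\Omega \phi\, d\rho_t$ has an absolutely continuous (hence uniformly continuous) representative for each fixed $\phi \in C^1(\Omega)$; a density/separability argument over a countable dense family of test functions, together with the uniform bound $\rho_t(\Omega) \le 1$, upgrades this to narrow continuity of $t \mapsto \rho_t$. The same argument applied to \eqref{eq:cont2} gives the narrowly continuous representative for $\mu_t^\gamma$, hence for $\mu_t$.

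For the pointwise bounds, fix a time $t$ (in the continuous representative) and work with the density $u := \frac{d\rho_t}{d\lambda_\Omega}$. The finiteness of $\int_0^1 \calA_\Gamma\, dt$ forces $\Psi_\Omega\bigl(\frac{d\rho_t}{d\lambda_\Omega}, \frac{dJ_t}{d\lambda_\Omega}\bigr) < \infty$ for a.e.\ $t$, and by the definition of $\Psi_\Omega$ this already requires $\frac{d\rho_t}{d\lambda_\Omega} \in \domain(m_\Omega)$ for $\lambda_\Omega$-a.e.\ point, i.e.\ $a^\Omega \le u \le b^\Omega$; the same reasoning with $\Psi_\Gamma$ gives the bound for $\mu_t$. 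The subtlety is that this holds a priori only for a.e.\ $t$, whereas the statement asserts it for \emph{every} $t$. To get this I would use the narrowly continuous representative constructed in the first part: the set of measures $\rho \in M_+(\Omega)$ with $\rho \ll \lambda_\Omega$ and $a^\Omega \le \frac{d\rho}{d\lambda_\Omega} \le b^\Omega$ is narrowly closed (it is cut out by the conditions $\int \phi\, d\rho \le b^\Omega \int \phi\, d\lambda_\Omega$ and $\int \phi\, d\rho \ge a^\Omega \int \phi\, d\lambda_\Omega$ for all nonnegative $\phi \in C(\Omega)$, each of which passes to narrow limits), so a property holding for a dense set of times extends to all times by continuity.

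The main obstacle I anticipate is the careful handling of the a.e.-in-time versus everywhere-in-time issue, and in particular making sure the narrowly continuous representative from part one is the one for which the density bounds are asserted — this requires that the representative is genuinely canonical (any two narrowly continuous representatives of the same $L^\infty_t(M(\Omega))$-curve agreeing a.e.\ must agree everywhere). A secondary technical point is verifying that the a.e.-in-time momenta bounds (needed to even apply Cauchy--Schwarz to get the modulus of continuity) follow from $\int_0^1 \calA_\Gamma\, dt < \infty$; this is where the uniform bound $\sup_{z} m_\Omega(z) < \infty$ from Remark \ref{rem:bound} enters, via $|J_t|(\Omega)^2 \le \bigl(\sup m_\Omega\bigr) \lambda_\Omega(\Omega) \int_\Omega \Psi_\Omega\, d\lambda_\Omega$ by Jensen/Cauchy--Schwarz. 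The rest is routine adaptation of the classical argument.
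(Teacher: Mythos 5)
Your plan is correct and is essentially the argument the paper relies on: the paper's proof consists of a single citation to \cite[Proposition 2.3]{Lisini2010}, whose proof is exactly the combination you describe (Cauchy--Schwarz plus the uniform bound on the mobility to get $L^1$-in-time control of the momenta, narrow continuity via testing with product test functions and a density argument, a.e.-in-$t$ density bounds forced by finiteness of $\Psi$, and the upgrade to all $t$ by narrow closedness of the set $\{a^\Omega\lambda_\Omega \le \rho \le b^\Omega\lambda_\Omega\}$). The only cosmetic remark is that the uniform mass bound you invoke for the compactness/density step should be taken from the a.e.-in-$t$ density bounds (giving $\rho_t(\Omega)\le b^\Omega\lambda_\Omega(\Omega)$) rather than from $\rho_t(\Omega)\le 1$, so those bounds should be established before the continuity argument; this does not affect the validity of the proof.
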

\begin{proof}
 The proof follows as in \cite[Section 2, Definition 3]{Lisini2010} and \cite[Section 2, Lemma 1]{Lisini2010}.
\end{proof}

Note that for general mobilities it is not true that the infimum in \eqref{eq:BB2} is always finite \cite{Dolbeault_2008}. This depends on the interplay between the reference measures and the initial and final values. However, we point out that in case of standard dynamic optimal transport (i.e. with linear mobility) such a result follows from a similar argument to the one of \cite[Lemma 3.7]{monsaingeon2021new} that is based on the existence of Wasserstein geodesics and Fischer-Rao geodesics.

\subsection{Well-posedness of the dynamic formulation}\label{sec:WellPsdnFP}
In this section
we will employ the Direct method of Calculus of Variations to prove that \eqref{eq:BB2} admits at least one minimizer. 
The basic result needed for that is a compactness result (in time) for the time-dependent family of measures $\rho_t$ and $\mu_t$. In particular, the fundamental ingredient of such compactness result is the following Hölder estimate for $\rho_t$ and $\mu_t$. 
\begin{proposition}[Hölder estimates]\label{prop:holder}
    Given a regular, non self-intersecting $C^1$-curve $\gamma : [0,1] \rightarrow \Omega$, suppose that for $(J_t,V_t,\rho_t,\mu_t,f_t,G^0_t,G^1_t) \in {\rm CE(\Gamma)}$ there holds that $\int_0^1 \calA_\Gamma(J_t,\mathcal{V}_t,\rho_t,\mu_t,f_t, \mathcal{G}^0_t,\mathcal{G}^1_t)dt < M$ for a constant $M  > 0$. Then there exists a constant $C(\lambda_\Omega, \lambda_\Gamma, b^\Omega, b^\Gamma,M) >0$ only depending on the choice of reference measures, $b^\Omega$, $b^\Gamma$ and $M$, such that for all $s,t \in [0,1]$ it holds that
    \begin{align}\label{eq:est1}
        \|\rho_t - \rho_s\|_{C^1(\Omega)^*} &\leq  C(\lambda_\Omega, \lambda_\Gamma, b^\Omega, b^\Gamma,M)|t-s|^{1/2},
        \\\label{eq:est2}
        \|\mu^\gamma_t - \mu^\gamma_s\|_{C^1([0,1])^*} &\leq C(\lambda_\Omega, \lambda_\Gamma, b^\Omega, b^\Gamma,M)|t-s|^{1/2}.
    \end{align}
    Here, $C^1(\Omega)^*$ denotes the dual space of $C^1(\Omega)$ and $C^1([0,1])^*$ the dual space of $C^1([0,1])$.
\end{proposition}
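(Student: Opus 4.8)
The plan is to establish the two Hölder estimates in parallel, since both follow from the same mechanism: testing the continuity equation against a well-chosen test function and controlling the resulting momentum integral by the action bound through Cauchy–Schwarz and the mobility bounds. I would begin with \eqref{eq:est1}. Fix $s<t$ and a test function $\varphi \in C^1(\Omega)$ with $\|\varphi\|_{C^1(\Omega)} \le 1$. Using the narrowly continuous representatives of $\rho_t$ from Proposition~\ref{ExistenceDensities}, I apply the weak continuity equation \eqref{eq:cont1} on the time interval $[s,t]$ (taking a test function in $C^1(X_\Omega)$ independent of time on $[s,t]$, or equivalently a standard approximation argument) to get
\begin{align*}
\int_\Omega \varphi\, d\rho_t - \int_\Omega \varphi\, d\rho_s = \int_s^t \int_\Omega \nabla\varphi \cdot dJ_r\, dr - \int_s^t\int_\Gamma \varphi\, df_r\, dr - \int_s^t\int_{\{\gamma(0)\}}\varphi\, d\mathcal{G}_r^0\, dr + \int_s^t\int_{\{\gamma(1)\}}\varphi\, d\mathcal{G}_r^1\, dr.
\end{align*}
Taking the supremum over such $\varphi$ gives the $C^1(\Omega)^*$ norm on the left; on the right each term is bounded by the total variation $|J_r|(\Omega)$, $|f_r|(\Gamma)$, $|\mathcal{G}_r^0|$, $|\mathcal{G}_r^1|$ integrated over $[s,t]$.

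\textbf{Controlling the momentum masses.} The core estimate is $|J_r|(\Omega) \le C\,\calA_\Gamma(\dots)^{1/2}$ for a.e.\ $r$, and likewise for the other momenta. Indeed, by Cauchy–Schwarz with respect to $\lambda_\Omega$,
\begin{align*}
|J_r|(\Omega) = \int_\Omega \left|\frac{dJ_r}{d\lambda_\Omega}\right| d\lambda_\Omega \le \left(\int_\Omega \frac{|dJ_r/d\lambda_\Omega|^2}{m_\Omega(d\rho_r/d\lambda_\Omega)}\, d\lambda_\Omega\right)^{1/2}\left(\int_\Omega m_\Omega\!\left(\frac{d\rho_r}{d\lambda_\Omega}\right) d\lambda_\Omega\right)^{1/2}.
\end{align*}
The first factor is $\bigl(\int_\Omega \Psi_\Omega\bigr)^{1/2} \le \calA_\Gamma(\dots)^{1/2}$; for the second factor I use Remark~\ref{rem:bound} (the mobility is bounded, $\sup m_\Omega \le b^\Omega$ up to constants, more precisely $m_\Omega$ bounded on its domain) together with the fact that $\lambda_\Omega$ can be taken finite, or alternatively the density bound $d\rho_r/d\lambda_\Omega \le b^\Omega$ from Proposition~\ref{ExistenceDensities}, so that $\int_\Omega m_\Omega(d\rho_r/d\lambda_\Omega)\,d\lambda_\Omega \le C$. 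The same argument applied with $\lambda_\Gamma$ and $\lambda_E$ bounds $|f_r|(\Gamma)$, $|\mathcal{G}_r^0|$, $|\mathcal{G}_r^1|$ by $C\,(\alpha_2^{-1/2}+\alpha_3^{-1/2})\,\calA_\Gamma(\dots)^{1/2}$. Summing and integrating over $[s,t]$, Cauchy–Schwarz in time yields
\begin{align*}
\|\rho_t-\rho_s\|_{C^1(\Omega)^*} \le C\int_s^t \calA_\Gamma(\dots)^{1/2}\, dr \le C\,|t-s|^{1/2}\left(\int_0^1 \calA_\Gamma(\dots)\, dr\right)^{1/2} \le C\sqrt{M}\,|t-s|^{1/2},
\end{align*}
which is \eqref{eq:est1}.

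\textbf{The estimate on $\Gamma$.} For \eqref{eq:est2} I repeat the argument using the parametrized continuity equation \eqref{eq:cont2}: test against $\psi \in C^1([0,1])$ with $\|\psi\|_{C^1([0,1])} \le 1$, on the interval $[s,t]$, obtaining
\begin{align*}
\int_0^1 \psi\, d\mu_t^\gamma - \int_0^1 \psi\, d\mu_s^\gamma = -\int_s^t\int_0^1 \partial_s\psi\,|\gamma'(s)|^{-1} d\mathcal{V}_r^\gamma\, dr - \int_s^t\int_0^1 \psi\, df_r^\gamma\, dr - \int_s^t\int_{\{0\}}\psi\, d(\mathcal{G}_r^0)^\gamma\, dr + \int_s^t\int_{\{1\}}\psi\, d(\mathcal{G}_r^1)^\gamma\, dr.
\end{align*}
Here I use that $\gamma$ is a regular $C^1$-curve, so $|\gamma'|^{-1}$ is bounded above by a constant depending only on $\gamma$, hence $|\partial_s\psi|\,|\gamma'|^{-1} \le C_\gamma$; then $|\mathcal{V}_r^\gamma|([0,1]) = |V_r|(\Gamma)$ (push-forward preserves total variation) is controlled exactly as above by $\alpha_1^{-1/2}\calA_\Gamma(\dots)^{1/2}$, using the concavity/boundedness of $m_\Gamma$ and the density bound $d\mu_r/d\lambda_\Gamma \le b^\Gamma$. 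The same time-Cauchy–Schwarz step gives \eqref{eq:est2}.

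\textbf{Main obstacle.} I expect the only real subtlety to be the rigorous justification of testing the weak continuity equations \eqref{eq:cont1}–\eqref{eq:cont2} against functions that are piecewise constant (indicator of $[s,t]$) in time, which are not $C^1$ in $t$. This is handled by a standard mollification in the time variable combined with the narrow continuity of the representatives $t \mapsto \rho_t$ and $t\mapsto \mu_t^\gamma$ from Proposition~\ref{ExistenceDensities}, passing to the limit as the mollified temporal cutoff converges to $\mathbf{1}_{[s,t]}$; the boundary terms at $t=0,1$ do not appear because $s,t$ are interior or, if an endpoint, reduce to the prescribed data. Everything else — Cauchy–Schwarz, the mobility bounds from Remark~\ref{rem:bound} and Proposition~\ref{ExistenceDensities}, and the regularity of $\gamma$ — is routine. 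The constant $C$ depends only on $b^\Omega$, $b^\Gamma$, $M$ (and, through $\alpha_1,\alpha_2,\alpha_3$ and $\gamma$, on fixed data), as claimed.
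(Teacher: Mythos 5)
Your proposal is correct and follows essentially the same route as the paper: localize the weak continuity equation in time (the paper phrases this as weak differentiability of $t\mapsto\mu_t^\gamma(\varphi)$ plus the fundamental theorem of calculus, you phrase it as testing with a mollified temporal indicator — the same device), bound the total variations of the momenta by $\calA_\Gamma^{1/2}$ via Cauchy–Schwarz using the splitting $|w|=\sqrt{m}\sqrt{\Psi}$ together with Remark~\ref{rem:bound} and the density bounds of Proposition~\ref{ExistenceDensities}, and finish with Cauchy–Schwarz in time to produce the factor $|t-s|^{1/2}$. No substantive differences to report.
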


\begin{proof}
    The proof follows from classical arguments (see for example \cite[Proposition 3.5]{monsaingeon2021new} or \cite[Lemma A.2]{bredies2020optimal}). However, we will sketch it for estimate \eqref{eq:est2} since in our notation it involves the push-forward of the measures on the interval $[0,1]$. Given $\varphi \in C^1([0,1])$ note that the measurable map $t \mapsto \int_0^1 \varphi(s) d\mu^\gamma_t$ is weakly differentiable in $(0,1)$. Indeed for $\xi \in C_c(0,1)$ it holds that
    \begin{align*}
        \int_0^1\int_0^1 \xi'(t) \varphi(s) d\mu^\gamma_t(s)\, dt & = - \int_0^1\int_0^1  \xi(t) \varphi'(s) |\gamma'(s)|^{-1}d\mathcal{V}^\gamma_t\, dt - \int_0^1\int_0^1 \xi(t) \varphi(s) df_t^\gamma \\
        & = \int_0^1\int_0^1   \xi(t)\left(\varphi'(s) |\gamma'(s)|^{-1}d\mathcal{V}^\gamma_t - \varphi(s) df^\gamma_t\right)\,dt,
    \end{align*}
    implying that the weak derivative is equal to $t \mapsto \int_0^1 \varphi'(s) |\gamma'(s)|^{-1}d\mathcal{V}^\gamma_t - \varphi(s) df^\gamma_t$. Now, let $s,t\in[0,1]$ be given and suppose that $s\leq t$. By the fundamental theorem of calculus, it holds that 
    \begin{align*}
    |\mu^\gamma_t(\varphi) - \mu^\gamma_s(\varphi)| & \leq \int_s^t \int_0^1 |\varphi'(x)| |\gamma'(x)|^{-1}d\mathcal{V}^\gamma_\tau + |\varphi(x)| df^\gamma_\tau\, d\tau\\
        & \leq \Tilde{C}\|\varphi\|_{C^1([0,1])} \int_s^t \int_\Gamma \left|\frac{d\mathcal{V}_\tau}{d\lambda_\Gamma}\right| + \left| \frac{df_\tau}{d\lambda_\Gamma}\right|\, d\lambda_\Gamma\, d\tau \\
        & \leq \sqrt{2} \| \lambda_\Gamma\|^{\frac{1}{2}}_{M(\Gamma)}\Tilde{C}\|\varphi\|_{C^1([0,1])} |t-s|^{1/2} \left(\int_0^1 \int_\Gamma\left|\frac{d\mathcal{V}_\tau}{d\lambda_\Gamma}\right|^2 + \left| \frac{df_\tau}{d\lambda_\Gamma}\right|^2\, d\lambda_\Gamma\, d\tau \right)^{1/2}\\
        & \leq \sqrt{2} \| \lambda_\Gamma\|^{\frac{1}{2}}_{M(\Gamma)}\Tilde{C}\|\varphi\|_{C^1([0,1])} |t-s|^{1/2} \bigg(\int_0^1 \int_\Gamma m_\Gamma\left(\frac{d\mu_\tau}{d\lambda_\Gamma}\right)\Psi_\Gamma\left(\frac{d\mu_\tau}{d\lambda_\Gamma}, \frac{d\mathcal{V}_\tau}{d\lambda_\Gamma}\right) \\
        & \qquad \qquad \qquad \qquad \qquad \qquad \qquad \qquad \qquad + m_\Gamma\left(\frac{d\mu_\tau}{d\lambda_\Gamma}\right)\Psi_\Gamma\left(\frac{d\mu_\tau}{d\lambda_\Gamma}, \frac{df_\tau}{d\lambda_\Gamma}\right)\, d\lambda_\Gamma\, d\tau\bigg)^{1/2}\\
        & \leq C(\lambda_\Omega, \lambda_\Gamma, b^\Omega, b^\Gamma,M) \|\varphi\|_{C^1([0,1])} |t-s|^{1/2}
    \end{align*}
     where we used Hölder's inequality and Proposition \ref{ExistenceDensities} together with the properties of $m_\Gamma$. This concludes the proof. 
\end{proof}

The previous proposition is the main ingredient for the following compactness result.
\begin{theorem}[Compactness]\label{thm:compact}
Given a regular, non self-intersecting $C^1$-curve $\gamma : [0,1] \rightarrow \Omega$ and a sequence $(J_t^n,V_t^n,\rho_t^n,\mu_t^n,f_t^n,(G_t^0)^n,(G_t^1)^n) \in {\rm CE}(\Gamma)$ such that 
\begin{align*}
    \sup_{n\in \mathbb{N}} \int_0^1 \calA_\Gamma(J_t^n,\mathcal{V}_t^n,\rho_t^n,\mu_t^n,f_t^n,(\mathcal{G}^0_t)^n,(\mathcal{G}_t^1)^n)dt < \infty,
\end{align*}
there exists $(J_t,V_t,\rho_t,\mu_t,f_t, G_t^0,G_t^1) \in {\rm CE}(\Gamma)
$ such that, up to subsequences, 
\begin{itemize}
    \item [i)] $\rho^n_t \rightharpoonup \rho_t$ narrowly in $\Omega$ for all $t \in [0,1]$
    \item [ii)] $\mu^n_t \rightharpoonup \mu_t$ narrowly in $\Gamma$ for all $t \in [0,1]$
\end{itemize}
and
\begin{itemize}
    \item [iii)] $J^n \rightharpoonup J$  narrowly in $[0,1] \times \Omega$
    \item [iv)] $\mathcal{V}^n \rightharpoonup \mathcal{V}$ narrowly in $[0,1] \times \Gamma$
    \item [v)] $f^n  \rightharpoonup f$ narrowly in $[0,1] \times \Gamma$
    \item [vi)] $(\mathcal{G}^i)^n \rightharpoonup \mathcal{G}^i$ narrowly in $[0,1] \times \{\gamma(i)\}$ for $i=1,2$
\end{itemize}
Here, the un-subscripted quantities denote the measures defined on the product spaces given by their respective disintegration.

\end{theorem}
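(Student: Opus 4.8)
The plan is to assemble the a priori information already available -- the uniform density bounds of Proposition~\ref{ExistenceDensities} and the $1/2$-Hölder-in-time estimates of Proposition~\ref{prop:holder} -- into a refined Arzelà--Ascoli argument for the curves $t\mapsto\rho^n_t$ and $t\mapsto\mu^n_t$, complemented by a Banach--Alaoglu extraction for the momentum-type measures on the (compact) space--time domains, and then to pass to the limit in the \emph{linear} weak continuity equations \eqref{eq:cont1}--\eqref{eq:cont2}. First, the uniform bounds: by Proposition~\ref{ExistenceDensities} the densities obey $a^\Omega\le d\rho^n_t/d\lambda_\Omega\le b^\Omega$ and $a^\Gamma\le d\mu^n_t/d\lambda_\Gamma\le b^\Gamma$ for every $n$ and every $t$, so $\sup_{n,t}\rho^n_t(\Omega)\le b^\Omega\lambda_\Omega(\Omega)$ and $\sup_{n,t}\mu^n_t(\Gamma)\le b^\Gamma\lambda_\Gamma(\Gamma)$. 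For the momenta, a Cauchy--Schwarz inequality together with the boundedness of the mobilities (Remark~\ref{rem:bound}) gives, for instance,
\begin{align*}
|J^n|(X_\Omega)\;\le\;\Big(\int_0^1\!\int_\Omega \Psi_\Omega\big(\rho^n_t,J^n_t\big)\,d\lambda_\Omega\,dt\Big)^{1/2}\big(\,\sup m_\Omega\cdot\lambda_\Omega(\Omega)\,\big)^{1/2},
\end{align*}
and the first factor is controlled by the uniform action bound; the analogous estimates hold for $\mathcal{V}^n$, $f^n$, $(\mathcal{G}^i)^n$ with the weights $\alpha_1,\alpha_2,\alpha_3$ and the reference measures $\lambda_\Gamma,\lambda_E$. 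Hence all of these families are bounded in total variation, uniformly in $n$ (and, for $\rho$ and $\mu$, uniformly in $t$ as well).

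Next, the compactness of $\rho^n$ and $\mu^n$ in the time variable. Since $\Omega$ is compact, bounded subsets of $M(\Omega)$ are narrowly compact and metrizable, and on such a bounded set the $C^1(\Omega)^*$-norm metrizes the narrow topology (because $C^1(\Omega)$ is dense in $C(\Omega)$ and the unit ball of $C^1(\Omega)$ is precompact in $C(\Omega)$). Fix a countable dense set $D\subset[0,1]$ and, by a diagonal procedure using narrow compactness at each fixed time, extract a subsequence along which $\rho^n_t\rightharpoonup\rho_t$ for every $t\in D$. Estimate \eqref{eq:est1} is stable under this limit, so $t\mapsto\rho_t$ is $1/2$-Hölder on $D$ for the $C^1(\Omega)^*$-distance and extends uniquely to a narrowly continuous curve on $[0,1]$; a standard $3\varepsilon$ argument -- insert a point of $D$ near $t$ and use \eqref{eq:est1} for $\rho^n$ together with continuity of the limit -- then upgrades this to $\rho^n_t\rightharpoonup\rho_t$ for \emph{every} $t\in[0,1]$, the uniform mass bound ensuring this is genuine narrow convergence. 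Running the same argument on the push-forwards $(\mu^n_t)^\gamma$ on $[0,1]$ via \eqref{eq:est2}, and transporting the conclusion back through the homeomorphism $\gamma:[0,1]\to\Gamma$, yields $\mu^n_t\rightharpoonup\mu_t$ for every $t$. This establishes i) and ii).

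For the momenta, since $X_\Omega$, $X_\Gamma$ and $[0,1]\times\{\gamma(i)\}$ are compact, the total-variation bounds above and Banach--Alaoglu give, after finitely many further extractions, $J^n\rightharpoonup J$, $\mathcal{V}^n\rightharpoonup\mathcal{V}$, $f^n\rightharpoonup f$ and $(\mathcal{G}^i)^n\rightharpoonup\mathcal{G}^i$ narrowly, which are iii)--vi); we set $V=\tau_\Gamma\mathcal{V}$ and $G^i=\tau_{\Gamma}(\gamma(i))\mathcal{G}^i$. Moreover, from $\rho^n_t\rightharpoonup\rho_t$ for all $t$ with uniformly bounded mass, dominated convergence in $t$ shows that the associated space--time measures converge, $dt\otimes\rho^n_t\rightharpoonup dt\otimes\rho_t$ on $X_\Omega$, and likewise $dt\otimes\mu^n_t\rightharpoonup dt\otimes\mu_t$; the action bound also controls $\int_0^1\big(|J^n_t|(\Omega)\big)^2\,dt$, which prevents concentration of the limiting momentum in time and hence ensures that $J$ (and analogously $\mathcal V,f,\mathcal G^i$) admits a disintegration in $t$, so that $(J_t,V_t,\rho_t,\mu_t,f_t,G^0_t,G^1_t)$ genuinely lies in $\mathcal{D}_{\rm adm}(\Gamma)$. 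Finally we pass to the limit in \eqref{eq:cont1}--\eqref{eq:cont2}: each term is linear in the measures and is tested against a \emph{fixed} continuous function (namely $\partial_t\varphi$, $\nabla\varphi$, $\varphi$ on $X_\Omega$, and $\partial_t\psi$, $s\mapsto\partial_s\psi\,|\gamma'(s)|^{-1}$, $\psi$ on $[0,1]^2$, all continuous since $\gamma$ is a regular $C^1$-curve; cf.\ \eqref{eq:geometry}), while the right-hand sides involve only the fixed initial and final data; invoking the convergences just obtained (and the homeomorphism $\gamma$ to move between $\Gamma$ and $[0,1]$), every term converges to its analogue for the limit tuple. Hence the limit tuple satisfies ${\rm CE}(\Gamma)$, and i)--vi) hold along the extracted subsequence.

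The analytically delicate point is the second step: converting the $1/2$-Hölder estimate into narrow convergence of $\rho^n_t$ (resp.\ $\mu^n_t$) for \emph{every} $t\in[0,1]$, rather than merely on a dense set or for a.e.\ $t$. This requires working with a distance that is genuinely compatible with the narrow topology on bounded sets of measures and carefully combining the equicontinuity furnished by \eqref{eq:est1}--\eqref{eq:est2} with the diagonal extraction; once this is in place, the remaining steps -- the Banach--Alaoglu extraction for the momenta and the passage to the limit in the linear constraint -- are routine, the substantive analytic work having already been carried out in Propositions~\ref{ExistenceDensities} and~\ref{prop:holder}.
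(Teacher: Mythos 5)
Your proposal is correct and follows essentially the same route as the paper: uniform mass bounds from Proposition~\ref{ExistenceDensities}, Cauchy--Schwarz plus Remark~\ref{rem:bound} for total-variation bounds on the momenta, time-equicontinuity from Proposition~\ref{prop:holder} feeding an Arzel\`a--Ascoli argument for $\rho^n_t$ and $(\mu^n_t)^\gamma$, Prokhorov for the space--time measures, disintegration in time via the $L^2$-in-time control of the masses, and linearity to pass to the limit in ${\rm CE}(\Gamma)$. The only difference is presentational: you inline the diagonal-extraction/$3\varepsilon$ proof of the generalized Arzel\`a--Ascoli step, whereas the paper cites it directly from \cite[Proposition A.4]{bredies2020optimal}.
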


\begin{proof}
We start by proving $i)$ and $ii)$. First note that due the definition of $\mathcal{A}_\Gamma$ and the narrow continuity of $\rho^n_t$ and $\mu^n_t$ inferred from Proposition \ref{ExistenceDensities} it holds that $\rho_t^n \ll \lambda_{\Omega}$ and $\mu_t^n \ll \lambda_{\Gamma}$ for every $t\in [0,1]$. Moreover, thanks to Proposition \ref{ExistenceDensities} it is also true that $a^\Omega \le \frac{d\rho^n_t}{d\lambda_\Omega} \le b^\Omega$ $\lambda_\Omega$-a.e. and $a^\Gamma \le \frac{d\mu^n_t} {d\lambda_\Gamma} \le  b^\Gamma$ $\lambda_\Gamma$-a.e. for every $t \in [0,1]$ and every $n$. In particular
    \begin{align}
        \sup_n\sup_t \|\rho_t^n\|_{M(\Omega)} < \infty, \quad \sup_n\sup_t \|\mu_t^n\|_{M(\Gamma)} < \infty
    \end{align}
    and thus, by the properties of the push-forward we also have
    \begin{align}
       \sup_n\sup_t \|(\mu_t^\gamma)^n\|_{M([0,1])} < \infty.
    \end{align}
     Therefore, thanks to Proposition \ref{prop:holder} and by applying a generalized Arzelà-Ascoli theorem (\cite[Proposition A.4]{bredies2020optimal}) we conclude that there exists a narrowly continuous family of Borel measures $\rho_t \in M(\Omega)$ and $\mu^\gamma_t \in M([0,1])$ such that the following narrow convergence holds
     \begin{align}\label{eq:convpu}
        \rho^n_t \rightharpoonup \rho_t, \quad (\mu^\gamma_t)^n \rightharpoonup \mu^\gamma_t, \quad \text{for every } t \in [0,1]. 
     \end{align}
     Define now $\mu_t = \gamma_\# \mu^\gamma_t \in M(\Gamma)$. Moreover, $\mu_t^n \rightharpoonup \mu_t$ for every $t \in [0,1]$. Indeed,
     \begin{align*}
         \lim_{n\to\infty} \int_\Gamma \varphi((\gamma \circ \gamma^{-1})(x)) d(\mu^n_t - \mu_t) = \lim_{n\to\infty}\int_0^1 \varphi(\gamma(s))d((\mu^\gamma_t)^n - \mu^\gamma_t) = 0.  
     \end{align*}
     
     We now prove $iii)$, $iv)$, $v)$ and $vi)$. 
    Starting with $iv)$, recall that $|\mathcal{V}_t^n| \ll \lambda_\Gamma$ for almost every $t \in [0,1]$. Then 
    \begin{align}
        \int_0^1 \int_\Gamma \left|\frac{d\mathcal{V}^n_t}{d\lambda_\Gamma}\right| d \lambda_\Gamma\, dt &\leq \int_0^1 \int_\Gamma \sqrt{m_\Gamma\left(\frac{d\mu^n_{t}}{d\lambda_\Gamma}\right)}\sqrt{\Psi_\Gamma\left(\frac{d\mu^n_{t}}{d\lambda_\Gamma}, \frac{d\mathcal{V}^n_{t}}{d\lambda_\Gamma} \right)}d\lambda_\Gamma\, dt \notag\\
        & \leq \left(\int_0^1 \int_\Gamma  m_\Gamma\left(\frac{d\mu^n_{t}}{d\lambda_\Gamma}\right) d\lambda_\Gamma dt\right)^{1/2}
       \left(\int_0^1 \int_\Gamma  \Psi_\Gamma\left(\frac{d\mu^n_{t}}{d\lambda_\Gamma}, \frac{d\mathcal{V}^n_{t}}{d\lambda_\Gamma} \right) d\lambda_\Gamma dt\right)^{1/2}. \label{eq:equibnd}
    \end{align}
   Therefore from the fact that $a^\Gamma\leq \frac{d\mu^n_{t}}{d\lambda_\Gamma} \leq b^\Gamma$ $\lambda_\Gamma$-a.e. and Remark \ref{rem:bound}, it follows that $\sup_n |\mathcal{V}^n|([0,1]\times\Gamma) <\infty$. Thus, by Prokhorov's theorem, there exists $\mathcal{V} \in M([0,1]\times\Gamma)$ such that, up to subsequence, $\mathcal{V}^n \rightharpoonup \mathcal{V}$. Since the uniform bounds for the total variation of $J^n$, $f^n$ and $\mathcal{G}_i^n$ are obtained similarly, the remaining convergences $iii)$, $v)$ and $vi)$ follow using similar arguments. It remains to show that $(J_t,V_t,\rho_t,\mu_t,f_t, G_t^0,G_t^1) \in {\rm CE}(\Gamma)$. First note that $\mathcal{V}^n$ is equibounded by \eqref{eq:equibnd} and equi-integrable by the same estimates. Therefore we have that 
   for every Borel $I \subset [0,1]$ it holds that $|\mathcal{V}|(I\times \Gamma) = \int_{I} p(t)$ for a suitable $p\in L^1([0,1])$ (see \cite[Lemma 4.5]{Dolbeault_2008} or \cite[Theorem 8.1]{gamkrelidze2013principles}).  
   This implies by the disintegration theorem \cite[Theorem 2.28]{ambrosio2000functions} that $\mathcal{V} \in M_{[0,1]}(\Gamma)$.
   With similar reasoning it holds that $f \in M_{[0,1]}(\Gamma)$ and $\mathcal{G}^i \in M_{[0,1]}(\{\gamma(i)\})$, so that $(J_t,V_t,\rho_t,\mu_t,f_t, G_t^0,G_t^1) \in \mathcal{D}_{\rm adm}(\Gamma)$.   
  It remains to prove that $(J_t,V_t,\rho_t,\mu_t,f_t, G_t^0,G_t^1)$ solves the coupled system \eqref{eq:cont1} and \eqref{eq:cont2}. Note that the narrow convergence $\mathcal{V}^n \rightharpoonup \mathcal{V}$ and $f^n \rightharpoonup f$ implies the narrow convergence of the push-forwards $(\mathcal{V}^\gamma)^n \rightharpoonup \mathcal{V}^\gamma$ and $(f^\gamma)^n \rightharpoonup f^\gamma$ (respectively) due to the continuity of $\gamma$. Therefore, by additionally using \eqref{eq:convpu} and the fact that $\gamma$ is assumed to be regular, passing to the limit in both continuity equations in Definition \ref{def:conteq} we conclude that $(J_t,V_t,\rho_t,\mu_t,f_t, G_t^0,G_t^1) \in \rm CE(\Gamma)$. 
\end{proof}
We are now ready to prove existence of minimizers for \eqref{eq:BB2}.

\begin{theorem}\label{ExistenceFixedGamma}
Given a regular, non self-intersecting $C^1$-curve $\gamma : [0,1] \rightarrow \Omega$, suppose that \eqref{eq:BB2} is finite. Then there exists a minimizer $(J_t,V_t,\rho_t,\mu_t,f_t, G_t^0,G_t^1) \in {\rm CE}(\Gamma)$ of the dynamic problem \eqref{eq:BB2}.  
\end{theorem}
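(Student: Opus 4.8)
The plan is to apply the direct method of the calculus of variations, with Theorem~\ref{thm:compact} providing the necessary compactness and Remark~\ref{rem:lsc} providing the lower semicontinuity. First I would take a minimizing sequence $(J_t^n,V_t^n,\rho_t^n,\mu_t^n,f_t^n,(G_t^0)^n,(G_t^1)^n) \in {\rm CE}(\Gamma)$ for \eqref{eq:BB2}, that is, a sequence along which $\int_0^1 \calA_\Gamma\,dt$ converges to the (finite) infimum. Since the sequence of values converges, it is in particular bounded, so the hypothesis of Theorem~\ref{thm:compact} is satisfied: $\sup_n \int_0^1 \calA_\Gamma(J_t^n,\mathcal{V}_t^n,\rho_t^n,\mu_t^n,f_t^n,(\mathcal{G}^0_t)^n,(\mathcal{G}_t^1)^n)\,dt < \infty$.

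Next I would invoke Theorem~\ref{thm:compact} to extract a subsequence (not relabeled) converging to a limit tuple $(J_t,V_t,\rho_t,\mu_t,f_t,G_t^0,G_t^1)$, with the convergences i)--vi) listed there; crucially, the theorem also guarantees that this limit lies in ${\rm CE}(\Gamma)$, so it is an admissible competitor for \eqref{eq:BB2}. The remaining task is to show that the limit tuple achieves the infimum, for which it suffices to prove
\begin{align*}
\int_0^1 \calA_\Gamma(J_t,\mathcal{V}_t,\rho_t,\mu_t,f_t,\mathcal{G}^0_t,\mathcal{G}^1_t)\,dt \le \liminf_{n\to\infty} \int_0^1 \calA_\Gamma(J_t^n,\mathcal{V}_t^n,\rho_t^n,\mu_t^n,f_t^n,(\mathcal{G}^0_t)^n,(\mathcal{G}_t^1)^n)\,dt.
\end{align*}
For this I would use the lower semicontinuity recorded in Remark~\ref{rem:lsc}: the functional $\calA_\Gamma$ is a sum of terms each of the form $\int \Psi(z,w)$ with $\Psi$ jointly convex, one-homogeneous-in-the-right-sense and lower semicontinuous, so by \cite[Theorem 2.1]{Dolbeault_2008} each term is lower semicontinuous under narrow convergence of the relevant pair of measures. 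Combining with narrow convergence in space-time of $(\rho^n,J^n)$, $(\mu^n,\mathcal{V}^n)$, $(\mu^n,f^n)$ and of the boundary pairs $(\mu^n\mres\gamma(i),\mathcal{G}^i_n)$ — all of which follow from i)--vi), after integrating the pointwise-in-$t$ narrow convergence of $\rho^n_t,\mu^n_t$ over time (e.g. via dominated convergence using the uniform density bounds of Proposition~\ref{ExistenceDensities}) — yields the desired liminf inequality for the time-integrated action. Since the infimum is finite, the limit tuple is therefore a minimizer.

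The main obstacle, and the only genuinely delicate point, is ensuring that the lower semicontinuity statement of Remark~\ref{rem:lsc} is applied to the correct mode of convergence: \cite[Theorem 2.1]{Dolbeault_2008} requires narrow convergence of the \emph{joint} space-time measures (density together with momentum), whereas Theorem~\ref{thm:compact} gives narrow convergence of $\rho^n_t, \mu^n_t$ only pointwise in $t$ and narrow convergence of $J^n,\mathcal{V}^n,f^n$ only as space-time measures. One must therefore upgrade the pointwise-in-$t$ convergence of $\rho^n_t$ (and $\mu^n_t$) to narrow convergence of $\rho^n := \rho^n_t\otimes dt$ in $M(X_\Omega)$, which is immediate from the uniform bound $a^\Omega \le d\rho^n_t/d\lambda_\Omega \le b^\Omega$ of Proposition~\ref{ExistenceDensities} and dominated convergence, and similarly check that the pairs $(\rho^n,J^n)$ etc.\ converge jointly — again automatic since each component converges narrowly and the total masses are uniformly bounded. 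With that bookkeeping in place, the argument is routine; the boundary terms involving $\mathcal{G}^i$ are handled identically, using that $\mu^n_t\mres\gamma(i)$ converges narrowly on the singleton $\{\gamma(i)\}$ as a consequence of ii).
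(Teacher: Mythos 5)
Your proposal follows the same overall strategy as the paper: direct method, compactness of the minimizing sequence via Theorem~\ref{thm:compact}, and lower semicontinuity of the action via Remark~\ref{rem:lsc}. The one place where you genuinely diverge is the final liminf inequality. The paper first applies Fatou's lemma to pass the liminf inside the time integral and then invokes the slicewise lower semicontinuity of $\calA_\Gamma$ for each fixed $t$; you instead upgrade the pointwise-in-$t$ narrow convergence of the densities to narrow convergence of the space-time measures and apply the Dolbeault--Nazaret--Savar\'e lower semicontinuity to the joint space-time pairs $(\rho^n,J^n)$, $(\mu^n,\mathcal{V}^n)$, etc. Your route is arguably the more careful one: the paper's slicewise argument implicitly needs $J^n_t\rightharpoonup J_t$ (and likewise for $\mathcal{V}^n_t$, $f^n_t$, $(\mathcal{G}^i_t)^n$) for a.e.\ fixed $t$, whereas Theorem~\ref{thm:compact} only delivers narrow convergence of these momenta as space-time measures; you correctly identify this mode-of-convergence mismatch as the delicate point and resolve it by working at the space-time level, where the convergence supplied by the compactness theorem matches the hypotheses of the lower semicontinuity result. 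Both arguments reach the same conclusion, and yours requires no extraction of further subsequences, only the routine observation that the uniform density bounds of Proposition~\ref{ExistenceDensities} let you integrate the pointwise-in-$t$ convergence of $\rho^n_t$ and $\mu^n_t$ in time.
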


\begin{proof}
    The proof follows directly from Theorem \ref{thm:compact} and Remark \ref{rem:lsc} by the application of the Direct Method of Calculus of Variations. Given a minimizing sequence $(J_t^n,V_t^n,\rho_t^n,\mu_t^n,f_t^n,(G_t^0)^n,(G_t^1)^n) \in {\rm CE}(\Gamma)$, up to extracting a subsequence, it holds that 
    \begin{align}
    \sup_{n\in \mathbb{N}} \int_0^1 \calA_\Gamma(J_t^n,\mathcal{V}_t^n,\rho_t^n,\mu_t^n,f_t^n,(\mathcal{G}_t^0)^n,(\mathcal{G}_t^1)^n)dt  < \infty.
    \end{align}
   We can then apply Theorem \ref{thm:compact} to extract a subsequence (not relabelled) such that the convergence $i) - vi)$ holds. Then, note that since $\mathcal{A}_\Gamma \geq 0$, by Fatou's lemma it holds that 
   \begin{align*}
       \liminf_{n\to\infty} \int_0^1 \calA_\Gamma(J_t^n,\mathcal{V}_t^n,\rho_t^n,\mu_t^n & ,f_t^n,(G_t^0)^n,(G_t^1)^n)dt \geq \int_0^1 \liminf_{n\to\infty} \calA_\Gamma(J_t^n,\mathcal{V}_t^n,\rho_t^n,\mu_t^n,f_t^n,(\mathcal{G}_t^0)^n,(\mathcal{G}_t^1)^n)dt \\
       & \geq \int_0^1 \calA_\Gamma(J_t,\mathcal{V}_t,\rho_t,\mu_t,f_t,\mathcal{G}_t^0,\mathcal{G}_t^1)dt
   \end{align*}
   where the last inequality follows from Remark \ref{rem:lsc}. This concludes the proof applying the Direct Methods of Calculus of Variations.
\end{proof}

\section{Parameter scaling limits}\label{ParamLim}

In this section, following \cite[Section 6]{monsaingeon2021new}, we aim to investigate the asymptotic behaviour of \eqref{eq:BB2} with respect to the parameters $\alpha_2$ and $\alpha_3$ establishing a connection to uncoupled transport problems.

We start by considering $\alpha_2, \alpha_3 \to+\infty$. The proof of the limit follows the arguments in \cite{monsaingeon2021new}. However, we will state the proof again since we are dealing with additional coupling mechanisms and mobilities here. First we define the natural limiting objects:
\begin{align}\label{eq:nopar}
W^2_\Omega(\rho_0,\rho_1) =
\inf_{(J_t,\rho_t) \in {\rm CE}_\Omega }\int_0^1\int_{\Omega}\Psi_\Omega\left(\frac{d\rho_t}{d\lambda_\Omega}, \frac{dJ_t}{d\lambda_\Omega} \right) \, d\lambda_\Omega,
\end{align}
\begin{align}\label{eq:nopar2}
W^2_\Gamma(\mu_0,\mu_1) =
\inf_{(V_t,\mu_t) \in {\rm CE}_\Gamma }\int_0^1\int_{\Gamma} \Psi_\Gamma\left(\frac{d\mu_t}{d\lambda_\Gamma},\frac{d\mathcal{V}_t}{d\lambda_\Gamma}\right) \, d\lambda_\Gamma
\end{align}
where the constraints $(J_t,\rho_t) \in {\rm CE}_\Omega$ and $(V_t,\mu_t) \in {\rm CE}_\Gamma$ means that the pairs $(J_t,\rho_t)$ and $(V_t,\mu_t)$ satisfy the continuity equation distributionally on $\Omega$ and in $\Gamma$ respectively. The quantities \eqref{eq:nopar} and \eqref{eq:nopar2} denote the classical uncoupled Wasserstein distances on the two domains $\Omega$ and $\Gamma$ with the non-linear mobilities $m_\Omega$ and $m_\Gamma$ from Definition \ref{def:adm_mobility}.
Note that without redefining it, we will make the dependence on the parameters $\alpha_i$ in $\mathcal{A}_\Gamma$ explicit as $\mathcal{A}_\Gamma^{\alpha_1,\alpha_2,\alpha_3}$.
In what follows we will say that we have compatible mass for the initial and final data if $\|\mu_0\|_{M(\Gamma)} = \| \mu_1\|_{M(\Gamma)}$ (by definition of $\mathcal{P}_{\rm adm}(\Gamma)$ this is equivalent to $\|\rho_0\|_{M(\Omega)} = \|\rho_1\|_{M(\Omega)}$).
We also remark that we often assume the limiting infima $W^2_\Gamma(\mu_0,\mu_1)$ and $W^2_\Omega(\rho_0,\rho_1)$ to be finite. This implies, a fortiori, that we have compatible mass. However, for sake of clarity we mention both assumptions.

\begin{theorem}
   Let $\gamma:[0,1]\to\Omega$ be a regular, non self-intersecting $C^1$-curve and $\alpha_1>0$. Suppose that we have compatible masses for the initial and final data, $W^2_\Omega(\rho_0,\rho_1) < \infty$ and $W^2_\Gamma(\mu_0,\mu_1) < \infty$. Then for every sequence of weights $(\alpha_2^n,\alpha_3^n)_{n\in\mathbb{N}}$ with $\alpha_2^n, \alpha_3^n\to\infty$ as $n\to\infty$ it holds that 
    \begin{align}\label{eq:thesislimit}
        \inf_{(J_t, V_t, \rho_t, \mu_t, f_t, G_t^0, G_t^1) \in {\rm CE}(\Gamma)}  \int_{0}^{1} \calA_\Gamma^{\alpha_1, \alpha^n_2, \alpha^n_3}(J_t, \mathcal{V}_t, \rho_t, \mu_t, f_t, \mathcal{G}_t^0, \mathcal{G}_t^1)\; dt \overset{n\to\infty}{\longrightarrow} W^2_{\Omega}(\rho_0, \rho_1) + \alpha_1 W^2_\Gamma (\mu_0, \mu_1).
    \end{align}
    Moreover, the minimizing pairs $(\rho^n_t, J^n_t)$ and $(\mu^n_t, V^n_t)$ converge as in $i)$, $ii)$, $iii)$, $iv)$ (see Theorem \ref{thm:compact}), up to subsequences, to minimizers  of \eqref{eq:nopar} and \eqref{eq:nopar2}.
    
If the masses are not compatible, it holds that for every sequence of weights $(\alpha_2^n,\alpha_3^n)$
    \begin{align*}
        \inf_{(J_t, V_t, \rho_t, \mu_t, f_t, G_t^0, G_t^1) \in {\rm CE}(\Gamma)}  \int_{0}^{1} \calA_\Gamma^{\alpha_1, \alpha^n_2, \alpha^n_3}(J_t, \mathcal{V}_t, \rho_t, \mu_t, f_t, \mathcal{G}_t^0, \mathcal{G}_t^1)\; dt\overset{n\to\infty}{\longrightarrow} \infty.
    \end{align*}
\end{theorem}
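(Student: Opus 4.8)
The plan is to run a $\Gamma$-convergence style argument: a $\limsup$ bound obtained by plugging in a decoupled competitor with no mass exchange, a $\liminf$ bound obtained from Theorem~\ref{thm:compact} and the lower semicontinuity of Remark~\ref{rem:lsc}, and finally the incompatible-mass case as a short corollary of the estimate used in the $\liminf$ step.

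\textbf{Upper bound.} Fix $\varepsilon>0$ and choose $(J_t,\rho_t)\in{\rm CE}_\Omega$ and $(V_t,\mu_t)\in{\rm CE}_\Gamma$ that are $\varepsilon$-optimal for \eqref{eq:nopar} and \eqref{eq:nopar2} respectively (they exist since both infima are finite). Take the competitor $(J_t,V_t,\rho_t,\mu_t,0,0,0)$. With $f_t$ and the boundary fluxes set to zero, \eqref{eq:cont1} reduces to the continuity equation for $(\rho_t,J_t)$ on $\Omega$ and, using \eqref{eq:geometry}, \eqref{eq:cont2} reduces to the continuity equation for $(\mu_t,V_t)$ on $\Gamma$, so this tuple lies in ${\rm CE}(\Gamma)$. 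Its action is independent of $n$ since the $\alpha_2^n$- and $\alpha_3^n$-terms vanish identically, and it equals
\[
\int_0^1\!\!\int_\Omega \Psi_\Omega\!\left(\tfrac{d\rho_t}{d\lambda_\Omega},\tfrac{dJ_t}{d\lambda_\Omega}\right)d\lambda_\Omega\,dt + \alpha_1\int_0^1\!\!\int_\Gamma \Psi_\Gamma\!\left(\tfrac{d\mu_t}{d\lambda_\Gamma},\tfrac{d\mathcal{V}_t}{d\lambda_\Gamma}\right)d\lambda_\Gamma\,dt \le W^2_\Omega(\rho_0,\rho_1) + \alpha_1 W^2_\Gamma(\mu_0,\mu_1) + (1+\alpha_1)\varepsilon.
\]
Letting $\varepsilon\to 0$ gives $\limsup_n$ of the left-hand side of \eqref{eq:thesislimit} $\le W^2_\Omega(\rho_0,\rho_1)+\alpha_1 W^2_\Gamma(\mu_0,\mu_1)$.

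\textbf{Lower bound and convergence of minimizers.} Let $(J^n_t,V^n_t,\rho^n_t,\mu^n_t,f^n_t,(G^0_t)^n,(G^1_t)^n)$ be $1/n$-minimizers of the left-hand side; by the upper bound their actions are bounded by some $M$, and since $\alpha_2^n,\alpha_3^n\geq 1$ eventually this yields $\sup_n\int_0^1\calA_\Gamma^{\alpha_1,1,1}dt<\infty$, so Theorem~\ref{thm:compact} applies and, up to a subsequence, we obtain the convergences i)--vi) to a tuple $(J_t,V_t,\rho_t,\mu_t,f_t,G^0_t,G^1_t)\in{\rm CE}(\Gamma)$. The crucial point is that this limit is \emph{decoupled}: from $\int_0^1\alpha_2^n\int_\Gamma\Psi_\Gamma(\mu^n_t,f^n_t)\,d\lambda_\Gamma\,dt\le M$ and $\alpha_2^n\to+\infty$ we get $\int_0^1\int_\Gamma\Psi_\Gamma(\mu^n_t,f^n_t)\,d\lambda_\Gamma\,dt\to 0$, hence by Cauchy--Schwarz together with the density bounds of Proposition~\ref{ExistenceDensities}, the mobility bound of Remark~\ref{rem:bound}, and $\lambda_\Gamma(\Gamma)<\infty$,
\[
\int_0^1 |f^n_t|(\Gamma)\,dt \le \Big(\textstyle\sup_z m_\Gamma(z)\cdot\lambda_\Gamma(\Gamma)\Big)^{1/2}\Big(\int_0^1\!\!\int_\Gamma\Psi_\Gamma(\mu^n_t,f^n_t)\,d\lambda_\Gamma\,dt\Big)^{1/2}\longrightarrow 0,
\]
and the same computation with $\alpha_3^n$ gives $\int_0^1|\mathcal{G}^{i,n}_t|\,dt\to 0$ for $i=0,1$. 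Therefore $f_t\equiv 0$ and $G^0_t\equiv G^1_t\equiv 0$, so \eqref{eq:cont1}--\eqref{eq:cont2} for the limit become exactly $(J_t,\rho_t)\in{\rm CE}_\Omega$ and $(V_t,\mu_t)\in{\rm CE}_\Gamma$. Dropping the nonnegative $\alpha_2^n$- and $\alpha_3^n$-terms and applying Remark~\ref{rem:lsc} to the $\Omega$- and $\Gamma$-terms yields
\[
\liminf_n\int_0^1\calA_\Gamma^{\alpha_1,\alpha_2^n,\alpha_3^n}dt \ge \int_0^1\!\!\int_\Omega\Psi_\Omega(\rho_t,J_t)\,d\lambda_\Omega\,dt + \alpha_1\int_0^1\!\!\int_\Gamma\Psi_\Gamma(\mu_t,\mathcal{V}_t)\,d\lambda_\Gamma\,dt \ge W^2_\Omega(\rho_0,\rho_1)+\alpha_1 W^2_\Gamma(\mu_0,\mu_1),
\]
which combined with the upper bound proves \eqref{eq:thesislimit}. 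Equality then propagates: the two integrals on the right are each at least the corresponding infimum while their weighted sum equals $W^2_\Omega+\alpha_1 W^2_\Gamma$, forcing both to be minimal, i.e.\ $(\rho_t,J_t)$ minimizes \eqref{eq:nopar} and $(\mu_t,V_t)$ minimizes \eqref{eq:nopar2}.

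\textbf{Incompatible masses and main obstacle.} If $\|\mu_0\|_{\rm TV}\ne\|\mu_1\|_{\rm TV}$, suppose for contradiction that along some subsequence the infima stay bounded by $M$; picking $1/n$-minimizers, the estimate above still gives $\int_0^1|f^n_t|(\Gamma)\,dt\to 0$ and $\int_0^1|\mathcal{G}^{i,n}_t|\,dt\to 0$. Testing \eqref{eq:cont2} with $\psi\equiv 1$ gives the mass balance $\mu_1(\Gamma)-\mu_0(\Gamma)=\int_0^1 f^n_t(\Gamma)\,dt+\int_0^1\mathcal{G}^{0,n}_t(\{\gamma(0)\})\,dt-\int_0^1\mathcal{G}^{1,n}_t(\{\gamma(1)\})\,dt$, whose right-hand side tends to $0$ (while the left is nonzero by hypothesis, since $\mu_i\ge0$), a contradiction; hence the infima diverge to $+\infty$. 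The heart of the argument is the decoupling step: converting the energy bound into the strong $L^1$-in-time total-variation bound on $f^n$ and $G^{i,n}$ displayed above, which is what allows passage to the limit in the \emph{coupled} continuity equations to produce two \emph{separate} ones — this is precisely where boundedness of the mobilities (Remark~\ref{rem:bound}) and the uniform density bounds (Proposition~\ref{ExistenceDensities}) are essential.
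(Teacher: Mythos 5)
Your proposal is correct and follows essentially the same route as the paper: the decoupled competitor $(J_t,V_t,\rho_t,\mu_t,0,0,0)$ for the upper bound, the energy bound combined with the boundedness of $m_\Gamma$ and the density bounds to force the total variations of $f^n_t$ and $(\mathcal{G}^i_t)^n$ to vanish, testing \eqref{eq:cont2} with $\psi\equiv 1$ for the mass balance in the incompatible case, and Theorem \ref{thm:compact} plus Remark \ref{rem:lsc} to pass to the limit. The only cosmetic differences are your use of $\varepsilon$-optimal competitors instead of exact minimizers and a Cauchy--Schwarz $L^1$-in-time bound where the paper uses Jensen for an $L^2$ bound; neither changes the substance of the argument.
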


\begin{proof}
    We start by showing a lower bound on the action. 
    First, let 
    \begin{align*}
        m_{\sup} &:= \sup\limits_{z\in[a_\Gamma, b_\Gamma]} m_\Gamma(z)
    \end{align*}
    which is finite by \eqref{eq:finite_sup}. 
    Suppose now that for given $\alpha_1,\alpha^n_2,\alpha^n_3$ the infimum \eqref{eq:BB2} is finite. Then, thanks to Theorem \ref{ExistenceFixedGamma}, there exists a minimizer of \eqref{eq:BB2} named $(J^n_t, V^n_t, \rho^n_t, \mu^n_t, f^n_t, (G^0_t)^n, (G^1_t)^n)$. In this case, 
    using the non-negativity of all occurring terms, Proposition \ref{ExistenceDensities} and Jensen's inequality we obtain the estimate
    \begin{align*}
        &\inf_{(J_t, V_t, \rho_t, \mu_t, f_t, G^0_t, G^1_t) \in {\rm CE}(\Gamma)}  \int_{0}^{1} \calA_\Gamma^{\alpha_1, \alpha_2^n, \alpha_3^n}(J_t, \mathcal{V}_t, \rho_t, \mu_t, f_t, \mathcal{G}_t^0, \mathcal{G}_t^1)\; dt \\
        \geq& \alpha_2^n \int_0^1 \int_\Gamma \Psi_\Gamma\left(\frac{d\mu^n_t}{d\lambda_\Gamma}, \frac{d f^n_t}{d\lambda_\Gamma} \right) d\lambda_\Gamma dt \\
        & \qquad \qquad + \alpha_3^n \int_0^1 \int_{\{\gamma(0)\} \cup \{\gamma(1)\}}\Psi_\Gamma\left(\frac{d(\mu^n_t \mres  \gamma(0))}{d\lambda_E}, \frac{d (\calG^0_t)^n}{d\lambda_E} \right) + \Psi_\Gamma\left(\frac{d(\mu^n_t \mres  \gamma(1))}{d\lambda_E}, \frac{d (\calG^1_t)^n}{d\lambda_E} \right) d\lambda_E dt\\
        \geq& \frac{\alpha_2^n}{m_{\sup}} \int_0^1 \int_\Gamma \left| \frac{df^n_t}{d\lambda_\Gamma} \right|^2 d\lambda_\Gamma dt + \frac{\alpha_3^n}{m_{\sup}} \int_0^1 \int_{\{\gamma(0)\} \cup \{\gamma(1)\}}\left| \frac{d(\calG^0_t)^n}{d\lambda_E} \right|^2 + \left| \frac{d(\calG^1_t)^n}{d\lambda_E}  \right|^2 d\lambda_E dt\\
        \geq& \frac{\min(\alpha_2, \alpha_3)}{m_{\sup} \max(\|\lambda_\Gamma\|_{M(\Gamma)}, \|\lambda_E\|_{M(\{\gamma(0)\} \cup \{\gamma(1)\})})} \left( \int_0^1 \|f^n_t\|_{M(\Gamma)}^2 + \|(\calG_t^0)^n\|_{M(\{\gamma(0)\})}^2 + \|(\calG^1_t)^n\|_{M(\{\gamma(1)\})}^2 \,dt \right).
    \end{align*}
    Moreover, testing \eqref{eq:cont2} with $\psi\equiv 1$ implies
    \begin{align*}
        &\int_0^1\|f^n_t\|^2_{M(\Gamma)} + \|(\calG^0_t)^n\|^2_{M(\{\gamma(0)\})} + \|(\calG^1_t)^n\|^2_{M(\{\gamma(1)\})}\, dt \\
        \geq& \frac{1}{3}\left(\int_0^1\|f^n_t\|_{M(\Gamma)} + \|(\calG^0_t)^n\|_{M(\{\gamma(0)\})} + \|(\calG^1_t)^n\|_{M(\{\gamma(1)\})}\, dt\right)^2 \\
        \geq& \frac{1}{3}\left| \|\mu_1\|_{M(\Gamma)} - \|\mu_0\|_{M(\Gamma)} \right|^2.
    \end{align*}
    Combining these estimates yields the divergence for incompatible masses as $\alpha^n_2, \alpha^n_3\to\infty$. 

    Now, we assume the compatibility condition to hold true. Let $(\rho^*_t,J^*_t)$ and $(\mu^*_t, V^*_t)$ be minimizers of \eqref{eq:nopar}, \eqref{eq:nopar2} that exist thanks to the boundedness of the infima and following similar arguments to the ones from Theorem \ref{ExistenceFixedGamma}. 
    Note that $(J^*_t, V^*_t, \rho^*_t, \mu^*_t, 0,0,0) \in {\rm CE}(\Gamma)$. Therefore
    \begin{align}\label{eq:bbo}
        \inf_{(J_t, V_t, \rho_t, \mu_t, f_t, G^0_t, G^1_t) \in {\rm CE}(\Gamma)}   \int_{0}^{1} \calA_\Gamma^{\alpha_1, \alpha_2^n, \alpha_3^n}(J_t, \mathcal{V}_t, \rho_t, \mu_t, f_t, \mathcal{G}_t^0, \mathcal{G}_t^1)\; dt &\leq W^2_{\Omega}(\rho_0, \rho_1) + \alpha_1 W^2_\Gamma (\mu_0, \mu_1),
    \end{align}
    implying that for $(\alpha_2^n,\alpha_3^n)$ the infimum \eqref{eq:BB2} is finite as well.
    Combining this estimate with the one established above yields
    \begin{align}
        &\int_0^1 \| f_t^n\|_{M(\Gamma)}^2 + \| (\calG_t^0)^n\|_{M(\{\gamma(0)\})}^2 + \| (\calG_t^1)^n\|_{M(\{\gamma(1)\})}^2 \, dt \nonumber \\
        \leq
        & \frac{m_{\sup} \max(\|\lambda_\Gamma\|_{M(\Gamma)}, \|\lambda_E\|_{M(\{\gamma(0)\} \cup \{\gamma(1)\})})}{\min(\alpha_2^n, \alpha_3^n)} \nonumber \\
        & \qquad \qquad \qquad \qquad \inf_{(J_t, V_t, \rho_t, \mu_t, f_t, G^0_t, G^1_t) \in {\rm CE}(\Gamma)}  \int_{0}^{1} \calA_\Gamma^{\alpha_1, \alpha_2, \alpha_3}(J_t, \mathcal{V}_t, \rho_t, \mu_t, f_t, \mathcal{G}_t^0, \mathcal{G}_t^1)\; dt \label{eq:boundes}\\
        \leq& \frac{m_{\sup} \max(\|\lambda_\Gamma\|_{M(\Gamma)}, \|\lambda_E\|_{M(\{\gamma(0)\} \cup \{\gamma(1)\})})}{\min(\alpha_2^n, \alpha_3^n)} \left( W^2_{\Omega}(\rho_0, \rho_1) + \alpha_1 W^2_\Gamma (\mu_0, \mu_1) \right) \notag
    \end{align}
    recalling that $f_t^{n}, (\calG_t^0)^{n}, (\calG_t^1)^{n}$ correspond to the action minimizer for the parameter choices $\alpha^n_2, \alpha^n_3>0$, that exist due to Theorem \ref{ExistenceFixedGamma}.  
    From \eqref{eq:boundes}, and since $\alpha^n_2,\alpha^n_3 \rightarrow \infty$, we can conclude boundedness of the sequence of measures $(f^n)_n$, $\left((\mathcal{G}^0)^{n}\right)_n$ and $\left((\mathcal{G}^1)^{n}\right)_n$, allowing us to extract a (relabelled) converging subsequence such that $f^{n}\rightharpoonup 0$ and $(G^0)^{n}$, $(G^1)^{n}\rightharpoonup 0$ narrowly. 
    From \eqref{eq:bbo} and using Theorem \ref{thm:compact}, we are able to conclude the existence of another relabelled subsequence $(J_t^n,V_t^n,\rho_t^n,\mu_t^n,f_t^n,(G_t^0)^n,(G_t^1)^n) \in {\rm CE}(\Gamma)$ converging as in Theorem \ref{thm:compact}.  
   Finally, from the lower-semicontinuity of the action from Remark \ref{rem:lsc} and estimate \eqref{eq:bbo} we deduce \eqref{eq:thesislimit}, up to subsequences. Note that since our argument can be applied to any starting subsequence obtaining the same limit, we conclude that \eqref{eq:thesislimit} holds for the full sequence.
\end{proof}

Next, we consider $\alpha_2, \alpha_3\to 0$. We expect a similar behaviour to \cite[Theorem 7]{monsaingeon2021new}. Due to different costs for transport in $\Omega$ and along $\Gamma$, we will not be able to simplify the limit to be the geodesic on the whole domain. Both terms will remain in the limit. 

\begin{theorem}
     Let $\gamma:[0,1]\to\Omega$ be a regular, non self-intersecting $C^1$-curve,  $\alpha_1>0$ and $(\alpha_2^n, \alpha_3^n) \rightarrow (0,0)$ for $n\to\infty$. Then, it holds that 
    \begin{align*}
        & \inf_{(J_t, V_t, \rho_t, \mu_t, f_t, G^0_t, G^1_t) \in {\rm CE}(\Gamma)}  \int_{0}^{1} \calA_\Gamma^{\alpha_1, \alpha^n_2, \alpha^n_3}(J_t, \mathcal{V}_t, \rho_t, \mu_t, f_t, \mathcal{G}_t^0, \mathcal{G}_t^1)\; dt \overset{n\to\infty}{\longrightarrow} \\
        & \qquad \qquad \qquad \qquad \qquad \qquad \qquad \inf_{(J_t, V_t, \rho_t, \mu_t, f_t, G^0_t, G^1_t) \in {\rm CE}(\Gamma)}   \int_{0}^{1} \calA_\Gamma^{\alpha_1, 0, 0}(J_t, \mathcal{V}_t, \rho_t, \mu_t, f_t, \mathcal{G}_t^0, \mathcal{G}_t^1)\; dt
    \end{align*}
    Moreover, the minimizers converge, up to subsequences, as in Theorem \ref{thm:compact}.
\end{theorem}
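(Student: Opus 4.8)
The plan is to read this as a $\Gamma$-convergence statement for $(\calA_\Gamma^{\alpha_1,\alpha_2,\alpha_3})$ as $(\alpha_2,\alpha_3)\to(0,0)$: first establish a two-sided bound on the infima, then upgrade to convergence of minimizers via compactness. Write $I(\alpha_2,\alpha_3)$ for the infimum in \eqref{eq:BB2} and $I_0:=I(0,0)$. The lower bound is immediate: the $\alpha_2$- and $\alpha_3$-weighted terms in \eqref{eq:actionformula} are nonnegative, so $\calA_\Gamma^{\alpha_1,\alpha_2,\alpha_3}(u)\ge\calA_\Gamma^{\alpha_1,0,0}(u)$ for every admissible $u$, hence $I(\alpha_2^n,\alpha_3^n)\ge I_0$ for all $n$ and $\liminf_n I(\alpha_2^n,\alpha_3^n)\ge I_0$. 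If $I_0=+\infty$ both sides are $+\infty$ and we are done, so assume $I_0<\infty$.

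For the matching upper bound I would use recovery competitors coming from the limit problem, after a density lemma: $I_0$ is unchanged if the infimum over ${\rm CE}(\Gamma)$ is restricted to tuples whose mass-exchange cost $E(u)=E_\Gamma(u)+E_{\rm bd}(u)$ (the $\Psi_\Gamma$-integrals involving $f_t$, respectively $\mathcal G_t^0,\mathcal G_t^1$) is finite. Granting this, for $\varepsilon>0$ choose such a $u_\varepsilon$ with $\int_0^1\calA_\Gamma^{\alpha_1,0,0}(u_\varepsilon)\,dt\le I_0+\varepsilon$; then $\int_0^1\calA_\Gamma^{\alpha_1,\alpha_2^n,\alpha_3^n}(u_\varepsilon)\,dt=\int_0^1\calA_\Gamma^{\alpha_1,0,0}(u_\varepsilon)\,dt+\alpha_2^n E_\Gamma(u_\varepsilon)+\alpha_3^n E_{\rm bd}(u_\varepsilon)\to\int_0^1\calA_\Gamma^{\alpha_1,0,0}(u_\varepsilon)\,dt$, since the exchange costs are finite and independent of $n$. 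Hence $\limsup_n I(\alpha_2^n,\alpha_3^n)\le I_0+\varepsilon$, and $\varepsilon\to0$ gives $\lim_n I(\alpha_2^n,\alpha_3^n)=I_0$. As a by-product $I(\alpha_2^n,\alpha_3^n)<\infty$ for large $n$, so by Theorem~\ref{ExistenceFixedGamma} there are minimizers $u^n=(J^n_t,V^n_t,\rho^n_t,\mu^n_t,f^n_t,(G^0_t)^n,(G^1_t)^n)$.

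For the convergence of the minimizers I would compare $u^n$ with a minimizer $\bar u$ of $\calA_\Gamma^{\alpha_1,0,0}$ having finite exchange cost (its existence is part of the obstacle below). Optimality of $u^n$ gives $\int_0^1\calA_\Gamma^{\alpha_1,\alpha_2^n,\alpha_3^n}(u^n)\,dt\le I_0+\alpha_2^n\bar E_\Gamma+\alpha_3^n\bar E_{\rm bd}$, and since $\int_0^1\calA_\Gamma^{\alpha_1,0,0}(u^n)\,dt\ge I_0$ (as $u^n\in{\rm CE}(\Gamma)$), subtracting shows both that the bulk and curve kinetic energies of $u^n$ are bounded uniformly in $n$ and that $\alpha_2^n E_\Gamma(u^n)+\alpha_3^n E_{\rm bd}(u^n)\le\alpha_2^n\bar E_\Gamma+\alpha_3^n\bar E_{\rm bd}$. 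The kinetic bound is precisely what the proof of Theorem~\ref{thm:compact} uses for items $i)$--$iv)$, so I extract a subsequence along which $\rho^n_t\rightharpoonup\rho_t$, $\mu^n_t\rightharpoonup\mu_t$, $J^n\rightharpoonup J$, $\mathcal V^n\rightharpoonup\mathcal V$, pass to the limit in the coupled continuity equations as there, and then by Fatou's lemma, the narrow lower semicontinuity of Remark~\ref{rem:lsc}, and $\calA_\Gamma^{\alpha_1,\alpha_2^n,\alpha_3^n}\ge\calA_\Gamma^{\alpha_1,0,0}$ conclude $\int_0^1\calA_\Gamma^{\alpha_1,0,0}(J_t,\dots)\,dt\le\lim_n I(\alpha_2^n,\alpha_3^n)=I_0$, i.e. the limit is a minimizer of \eqref{eq:BB2} at $\alpha_2=\alpha_3=0$.

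The main obstacle, in two related forms, is that sending $\alpha_2^n,\alpha_3^n\to0$ removes the only a priori control on $f_t,G_t^0,G_t^1$. First, the density lemma and the existence of a limiting minimizer $\bar u$ with finite exchange cost are nontrivial because $\calA_\Gamma^{\alpha_1,0,0}$ is blind to the exchange variables, so Theorem~\ref{thm:compact} no longer yields compactness for items $v)$--$vi)$; I would prove them by hand — given a near-minimizer, replace its $f_t$ by a mollification along $\Gamma$ with bounded density and absorb the (small, mass-balanced) correction into a momentum perturbation supported in a shrinking neighbourhood of $\Gamma$, checking that the extra kinetic cost vanishes with the mollification parameter while the density bounds of Proposition~\ref{ExistenceDensities} persist by convexity; alternatively finiteness of $E$ is automatic under standard assumptions on $m_\Gamma$ (e.g.\ bounded below on $[a^\Gamma,b^\Gamma]$). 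Second, to obtain convergence of $f^n_t$ and $(\mathcal G^i_t)^n$ (items $v)$--$vi)$) one needs $E_\Gamma(u^n),E_{\rm bd}(u^n)$ bounded uniformly in $n$, which $\alpha_2^n E_\Gamma(u^n)\le C$ does not by itself give; I expect this is the genuinely delicate point, to be handled by extracting more from optimality — e.g.\ comparing with a minimizer $\bar u$ that uses no endpoint exchange, so that $E_\Gamma(u^n)\le\bar E_\Gamma$ uniformly, and treating the endpoint terms symmetrically — with the caveat that if $\alpha_2^n$ and $\alpha_3^n$ decay at very different rates one may have to settle, as in the preceding theorem, for convergence only as in $i)$--$iv)$.
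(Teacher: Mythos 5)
Your route is essentially the paper's: the authors also obtain the lower bound from the pointwise monotonicity $\calA_\Gamma^{\alpha_1,0,0}\le\calA_\Gamma^{\alpha_1,\alpha_2,\alpha_3}$ combined with the lower semicontinuity of Remark~\ref{rem:lsc} and Fatou, obtain the upper bound from constant recovery sequences, and deduce convergence of minimizers from the compactness of Theorem~\ref{thm:compact}; they merely package the three steps as the fundamental theorem of $\Gamma$-convergence (restricting to $\alpha_2,\alpha_3\in(0,1)$ and checking $\Gamma$-convergence plus equi-coercivity). So structurally you have reproduced the printed argument, and your first two paragraphs are correct as written.

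The more substantive point is that the obstacles you isolate are genuine and are not resolved in the printed proof either. The constant recovery sequence verifies the $\limsup$ inequality only at tuples whose exchange cost $E_\Gamma+E_{\rm bd}$ is finite; at a tuple with $\int_0^1\calA_\Gamma^{\alpha_1,0,0}\,dt<\infty$ but infinite exchange cost the constant sequence yields $+\infty$ for every $n$, so the convergence of the infima really does hinge on your ``density lemma'' (the limit infimum is approximated by competitors of finite exchange cost), which the paper asserts only implicitly. Likewise, the paper's equi-coercivity step claims the sublevel sets of $\calA_\Gamma^{\alpha_1,\alpha_2,\alpha_3}$ are contained in those of $\calA_\Gamma^{\alpha_1,1,1}$, which is the reversed inclusion for $\alpha_2,\alpha_3\le1$; as you observe, vanishing weights give no uniform control on $f^n$ and $(\calG^i)^n$, so items $v)$--$vi)$ of Theorem~\ref{thm:compact} --- and in fact even the H\"older estimate for $t\mapsto\mu^n_t$ in Proposition~\ref{prop:holder}, whose constant involves the $\Psi_\Gamma(\mu,f)$ term of the action --- are not available without an extra input such as your comparison with a finite-exchange-cost minimizer $\bar u$ of the limit problem (which, as you correctly note, controls $E_\Gamma(u^n)$ uniformly only if the ratio $\alpha_3^n/\alpha_2^n$ stays bounded). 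Your proposal is therefore, if anything, more careful than the published proof; to make it complete you would still have to actually establish the density/approximation statement and the existence of $\bar u$ with finite exchange cost, but the overall strategy is sound and coincides with the paper's.
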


\begin{proof}
    If we are able to verify the assumptions of the fundamental theorem of $\Gamma$-convergence, the claim would follow. To do so, we restrict the problem to the case $\alpha_2, \alpha_3\in (0,1)$ and we verify the following statements:
    \begin{enumerate}
        \item The sequence of functionals $\int_{0}^{1} \calA_\Gamma^{\alpha_1, \alpha_2^n, \alpha_3^n}(J_t, \mathcal{V}_t, \rho_t, \mu_t, f_t, \calG_t^0, \calG_t^1)\, dt$ $\Gamma$-converges to the functional $\int_{0}^{1} \calA_\Gamma^{\alpha_1, 0, 0}(J_t, \mathcal{V}_t, \rho_t, \mu_t, f_t, \calG_t^0, \calG_t^1)\, dt$ as $\alpha_2, \alpha_3\to 0$.
        \item For all $\Lambda>0$ there exists a compact set $K_\Lambda$ such that $\{\int_{0}^{1} \calA_\Gamma^{\alpha_1, \alpha_2, \alpha_3}(J_t, \mathcal{V}_t, \rho_t, \mu_t, f_t, \calG_t^0, \calG_t^1)\; dt \leq \Lambda\}\subset K_\Lambda$ for all $0<\alpha_2, \alpha_3<1$.
    \end{enumerate}
    In order to prove $\Gamma$-convergence, we use the inequality
    \begin{align}\label{eq:ee}
        \int_{0}^{1} \calA_\Gamma^{\alpha_1, 0, 0}(J_t, \mathcal{V}_t, \rho_t, \mu_t, f_t, \calG_t^0, \calG_t^1)\, dt \leq \int_{0}^{1} \calA_\Gamma^{\alpha_1, \alpha_2, \alpha_3}(J_t, \mathcal{V}_t, \rho_t, \mu_t, f_t, \calG_t^0, \calG_t^1)\, dt
    \end{align}
    which holds true by non-negativity of each summand. 
    Thus, for any $(J_n^{n}, \mathcal{V}_t^{n}, \rho_t^{n}, \mu_t^{n}, f_t^{n}, ({G_t^0)^{n}, (G_t^1)^{n}})$ converging according to $i) - vi)$ (see Theorem \ref{thm:compact}) to $(J_t, \mathcal{V}_t, \rho_t, \mu_t, f_t, G_t^0, G_t^1)$, the lower semicontinuity of the action (Remark \ref{rem:lsc}), \eqref{eq:ee} and Fatou's Lemma imply that
    \begin{align*}
        \int_{0}^{1} \calA_\Gamma^{\alpha_1, 0, 0}(J_t, \mathcal{V}_t, \rho_t, \mu_t, f_t, {\calG_t^0}, {\calG_t^1})\; dt &\leq \liminf\limits_{n}\int_{0}^{1} \calA_\Gamma^{\alpha_1, 0, 0}(J_t^{n}, \mathcal{V}_t^{n}, \rho_t^{n}, \mu_t^{n}, f_t^{n}, (\calG_t^0)^{n}, (\calG_t^1)^{n})\; dt \\
        &\leq \liminf\limits_{n} \int_{0}^{1} \calA_\Gamma^{\alpha_1, \alpha^n_2, \alpha^n_3}(J_t^{n}, \mathcal{V}_t^{n}, \rho_t^{n}, \mu^{n}, f^{n}, (\calG_t^0)^{n}, (\calG_t^1)^{n})\; dt .
    \end{align*}
    Moreover, for any solution of the coupled continuity equations we are able to choose the constant sequence as a recovery sequence, proving $\Gamma$-convergence. 
    The equi-compactness follows from the inclusion of the sublevel sets
    \begin{align*}
    \left\{\int_{0}^{1} \calA_\Gamma^{\alpha_1, \alpha_2, \alpha_3}(J_t, \mathcal{V}_t, \rho_t, \mu_t, f_t, \calG_t^0, \calG_t^1)\; dt \leq \Lambda \right\}&\subset \left\{\int_{0}^{1} \calA_\Gamma^{\alpha_1, 1, 1}(J_t, \mathcal{V}_t, \rho_t, \mu_t, f_t, \calG_t^0, \calG_t^1)\; dt \leq \Lambda \right\}
    \end{align*}
    for $\alpha_2, \alpha_3 < 1$ and arbitrary $\Lambda>0$. From Theorem \ref{thm:compact} we conclude compactness of these sets with respect to the convergences $i) - vi)$. This allows us to apply \cite[Corollary 7.20]{dal2012introduction} and concludes the proof. 
\end{proof}

\section{Optimization over the preferential path}\label{OptPrefPath}

Here, we consider the dynamic problem where we allow for an optimal placement of the preferential path $\gamma$ according to initial and final distributions.
For technical reasons, we will consider curves defined in the interval $[0,L]$ for $L>0$, instead of $[0,1]$.
We assume initial and final data to be given by the probability measures $\eta_0, \eta_1 \in \mathcal{P}(\Omega)$, making them independent of the choice of $\Gamma$. By defining $\mu_0 := \eta_0 |_\Gamma$, $\rho_0 := \eta_0 - \bar \mu_0$ for any specific choice of $\Gamma\subset\Omega$ and the pair $(\mu_1, \rho_1)$ in the same way, we obtain an admissible pair of initial and final data as in the case of a fixed preferential path. 
Note that choosing different $\gamma$ modifies the space of admissible distributions and vector fields in $\rm CE (\Gamma)$.

Under the same assumptions of the previous section we consider the following variational problem.
\begin{definition}[Dynamic problem for varying preferential path]
Given a functional $R:C^{1}([0,L];\Omega) \to \R$ we consider the minimization problem
\begin{align}\label{eq:BBgamma}\tag{$BB_\gamma$}
\inf_{\gamma \in C^{1}([0,1];\Omega)}\inf _{(J_t, V_t, \rho_t, \mu_t, f_t, G_t^0, G_t^1) \in {\rm CE(\Gamma)}}  \int_{0}^{1} \calA_\Gamma (J_t, \mathcal{V}_t, \rho_t, \mu_t, f_t, \mathcal{G}_t^0, \mathcal{G}_t^1)\;d t + c R(\gamma)
\end{align}
for a parameter $c>0$, where we use initial and final data constructed as in the beginning of this section. 
\end{definition}

Our goal is to choose $R$ to satisfy the following properties: 
\begin{itemize}
    \item[i)] It is repulsive, meaning that it penalizes self-intersections of the curve. 
    \item[ii)] It preserves regularity in the sense that curves such that $R(\gamma) < \infty$ are continuously differentiable, regular and embedded and therefore parametrize a $C^1$-manifold.
    \item[iii)] It prevents closing of the curve at the endpoints in order for the boundary conditions to be well-defined. We could get rid of this condition if we impose the additional constraint $\calG^0_t = \calG^1_t$. In this case, the coupling terms in the weak formulation cancel for closed paths and continuous test functions. 
\end{itemize}
While different choices are possible, in the remaining part of this section we will consider the penalty 
\begin{align}\label{eq:penalty}
    R (\gamma) := 2^{-p}E_{\rm tp}^p(\gamma) -\log|\gamma(0) - \gamma(L)| + \int_0^{L} |\gamma'(t)| \, dt.
\end{align}
The first summand $E_{\rm tp}^p$ is called Tangent-Point energy of $\gamma$, see for example \cite{STRZELECKI_2012}, and it is responsible for preventing the self-intersection of the curve. We will define it rigorously in the next subsection. This choice is particularly suited for numerical implementations as discussed in Subsection \ref{DiscTPE}.
Moreover, we add the additional term $ -\log|\gamma(0) - \gamma(L)|$ prohibiting the closedness of the curve. This is added for technical as well as for physical reasons, since we expect the optimal path not to be closed. Finally we penalize the length of $\gamma$.
The constant parameter $c>0$ balances the penalty against the action functional, allowing for more interesting geometries than linear curves, which are favoured by the Tangent-Point energy.

We make the following assumptions regarding the behaviour of the action on varying curves.

\begin{assumption}\label{ass:additional}
Assume that 
\begin{enumerate}
\item The mobility $m_\Gamma$ (and consequently $\Psi_\Gamma$) is independent of the path $\Gamma$.
\item $\sup_{\gamma} \|\lambda_\Gamma\|_{M(\Gamma)} < \infty$, $\sup_{\gamma}  \|\lambda_E\|_{M(\{\gamma(0)\} \cup \{\gamma(L)\})} < \infty$, where both supremums are taken over the set 
\begin{align*}
    \{\gamma: \gamma \in C^1([0,L],\Omega), \text{regular and non self-intersecting}\}.
\end{align*}
\item For every $\gamma^n \rightarrow \gamma$ uniformly, it holds that $\lambda_{\Gamma^n} \rightharpoonup \lambda_{\Gamma}$, $\lambda_{E^n} \rightharpoonup \lambda_{E}$.
\end{enumerate}
\end{assumption}

\begin{remark}
    Note that the first condition assumes that the mobilities on preferential paths are always the same. This is a natural assumption since we expect that preferential paths are build in the same way. We believe that more general assumptions are possible, however we do not explore them in this paper. The second and third condition are instead of purely technical nature and necessary to retain compactness and lower semicontinuity of the action functional for varying preferential paths. 
    To be more precise, compactness and lower semicontinuity will follow from the structure of the action functional similar to the case of fixed preferential paths. The additional assumptions then ensure that these properties are compatible with varying domains. 
    One particular choice of such measures can be obtained through a reference measure on the domain $[0,L]$. Then, defining $\lambda_\Gamma$ and $\lambda_E$ using the pushforward under any curve $\gamma\in C([0,L];\Omega)$ is admissible. However, by not restricting the analysis to this specific case, we are able to include different reference measures as well.
\end{remark}

\subsection{The Tangent-Point energy for non-closed curves}\label{TPEcurves}

In this section we recall the definition of the Tangent-Point energy \cite{STRZELECKI_2012} stating and adapting the preliminary results needed in our analysis. Given $L>0$ and a curve $\gamma \in C^1([0,L]; \Omega)$ with image $\Gamma$, we denote by $r_{\rm tp}^{\gamma}(x,y)$ the radius of the unique circle tangent to $x\in\Gamma$ and passing through $x,y\in\Gamma$, see Figure~\ref{fig:TangentPointCircle} for an illustration. We use the convention that this radius is set to be zero if $x=y$, and is infinite if the vector $x-y$ is parallel to the tangent of $\Gamma$ in $x$. Moreover, note that $r_{\rm tp}^{\gamma}$ is defined only almost everywhere due to the possible presence of self-intersections. For $p>2$ the tangent point energy is defined as follows.

\begin{definition}[Tangent-Point energy of a curve]
Given $\gamma \in C^1([0,L]; \Omega)$ we define its Tangent-Point energy as
\begin{align}
E_{\rm tp}^p (\gamma) := \int_0^L \int_0^L\frac{|\gamma'(s)||\gamma'(t)|}{r_{\rm tp}^{\gamma}(\gamma(s), \gamma(t))^p} \, ds dt.
\end{align}
\end{definition}
Note that this energy is independent of the parametrization and could be defined for $\Gamma$ instead. However in our setting it is useful to work with an explicit parametrization. Additionally, the Tangent-Point energy is non-negative and attains its minimum for straight lines.

In \cite[Theorem 1.1]{STRZELECKI_2012} it was shown that the Tangent-Point energy is repulsive, meaning that for arclength parametrized curves finite energy implies that the image of the curve is a one dimensional embedded submanifold, possibly with boundary, that has no self-intersections. 
Regarding regularity, \cite{Blatt2013} has shown that for closed curves finiteness of $E^p_{\rm tp}$ is equivalent to the finiteness of the $W^{2-\frac{1}{p},p}$-seminorm. In the next proposition we will closely follow the proof of \cite[Proposition 2.5]{blatt_regularity_2012} to prove such a bound for non-closed curves.

\begin{proposition}[Energy spaces of the penalty functional]\label{EnergySpaces}
    Let $\gamma:[0,L]\to\R^d$ be a given arclength parametrized $C^{1}([0,L];\R^d)$ curve with length $L>0$ and $R(\gamma) \leq M$ for a constant $M>0$.
    Then $\gamma\in W^{2-\frac{1}{p},p}([0,L];\R^d)$, its image is an embedded one dimensional submanifold of $\R^d$ with boundary and the following estimate holds 
    \begin{align}\label{eq:estimate}
        [\gamma']^p_{W^{1 - \frac{1}{p}, p}([0,L];\R^d)} \leq C(p,M)
    \end{align}
    for a constant $C(p,M) > 0$ only depending on $p$ and $M$ and in particular independent of $\gamma$.
\end{proposition}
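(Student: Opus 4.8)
The plan is to adapt the argument of \cite[Proposition 2.5]{blatt_regularity_2012} from closed curves to the open case, following the scheme: first derive a pointwise bound relating the tangent-point radius to the turning of the tangent vector, then integrate this to obtain the fractional Sobolev seminorm estimate, and only at the end invoke the repulsivity (embeddedness) statement which is essentially \cite[Theorem 1.4]{STRZELECKI_2012} applied to the open curve. The key geometric fact is that, for an arc-length parametrized $C^1$ curve, the radius $r_{\rm tp}^\gamma(\gamma(s),\gamma(t))$ of the circle tangent to $\Gamma$ at $\gamma(s)$ and through $\gamma(t)$ can be written explicitly as
\begin{align}\label{eq:rtp-formula}
    \frac{1}{r_{\rm tp}^\gamma(\gamma(s),\gamma(t))} = \frac{2\,\mathrm{dist}\big(\gamma(t),\,\gamma(s) + \R\gamma'(s)\big)}{|\gamma(t)-\gamma(s)|^2},
\end{align}
so that finiteness of $E_{\rm tp}^p$ controls, in an integrated sense, how far $\gamma(t)$ can stray from the tangent line at $\gamma(s)$ relative to the squared chord length. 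Since $\gamma$ is arc-length parametrized, $|\gamma(t)-\gamma(s)| \le |t-s|$ and, crucially, one also has a lower bound $|\gamma(t)-\gamma(s)| \ge c|t-s|$ on a neighbourhood of the diagonal once the bi-Lipschitz character of $\gamma$ is known — this is where the repulsivity/embeddedness input enters, and it is what makes the two directions of the \cite{Blatt2013} equivalence genuinely interdependent. For the open curve I would localize: the bi-Lipschitz estimate and the submanifold-with-boundary conclusion hold just as in the closed case because the tangent-point energy is purely local in the sense that self-avoidance is detected by pairs $(s,t)$ with $\gamma(s)$ close to $\gamma(t)$, and the endpoints do not create new obstructions.

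Concretely, the first step is to record \eqref{eq:rtp-formula} and note that $\mathrm{dist}(\gamma(t),\gamma(s)+\R\gamma'(s)) \ge \tfrac12|\langle \gamma'(s), \gamma'(\sigma)-\gamma'(s)\rangle|\cdot(\text{something})$; more usefully, following Blatt one shows that for $|t-s|$ small,
\begin{align*}
    |\gamma'(t) - \gamma'(s)| \lesssim \int \frac{|t-s|}{r_{\rm tp}^\gamma(\gamma(\cdot),\gamma(\cdot))}\,,
\end{align*}
i.e. the increment of the tangent is pointwise dominated by an average of inverse tangent-point radii over a suitable set of parameters between $s$ and $t$. Raising to the $p$-th power, dividing by $|t-s|^{1+ (1-\frac1p)p} = |t-s|^{p}$ after accounting for the integration variable, and integrating in $s,t$ over $[0,L]^2$, Fubini and the arc-length normalization convert the resulting triple integral into a constant times $E_{\rm tp}^p(\gamma) \le M$, yielding $[\gamma']^p_{W^{1-\frac1p,p}} \le C(p,M)$. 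The constant depends only on $p$ and $M$ (the length $L$ and the ambient dimension drop out because of the scaling of the tangent-point energy and because $\gamma'$ is a unit vector), which is exactly \eqref{eq:estimate}. Finally, combining $\gamma \in W^{1,\infty}$ (arc-length) with $\gamma' \in W^{1-\frac1p,p}$ gives $\gamma \in W^{2-\frac1p,p}([0,L];\R^d)$, and \cite[Theorem 1.4]{STRZELECKI_2012} (stated there for arc-length curves, hence applicable to our open $\gamma$) gives that the image is an embedded one-dimensional submanifold with boundary.

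I expect the main obstacle to be the honest verification that the \emph{open}-curve case introduces no new difficulty near the endpoints: in \cite{blatt_regularity_2012,Blatt2013} the curves are closed, so one works with $\R/L\mathbb Z$ and the diagonal-neighbourhood estimates are uniform; for $[0,L]$ one must check that the pointwise bound on $|\gamma'(t)-\gamma'(s)|$ and the bi-Lipschitz lower bound on $|\gamma(t)-\gamma(s)|$ survive when $s$ or $t$ is near $0$ or $L$, where fewer "nearby" parameters are available on one side. The resolution is that the estimates only ever use parameters lying between $s$ and $t$ (or within a fixed fraction of $|t-s|$ of them on the side that exists), so one simply replaces two-sided neighbourhoods by one-sided ones at the cost of harmless constants; the repulsivity theorem of \cite{STRZELECKI_2012} is already phrased for arc-length parametrizations without a closedness hypothesis, so that part transfers verbatim. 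A secondary technical point is tracking that the constant is independent of $L$ — this follows by the scaling $E_{\rm tp}^p(\gamma_\lambda) = \lambda^{2-p} E_{\rm tp}^p(\gamma)$ for $\gamma_\lambda(s) = \lambda\gamma(s/\lambda)$ combined with the fact that the $W^{1-\frac1p,p}$-seminorm of a unit-speed curve's derivative is scale covariant in the matching way, so one may reduce to $L=1$.
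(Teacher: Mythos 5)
Your overall strategy (the explicit formula for $r_{\rm tp}^\gamma$, comparison of the energy with the Gagliardo seminorm of $\gamma'$, adaptation of \cite{blatt_regularity_2012} to open curves, and \cite[Theorem 1.4]{STRZELECKI_2012} for embeddedness) matches the paper's, but the central quantitative step is only asserted, and as sketched it would not go through. The actual link between $1/r_{\rm tp}$ and the turning of the tangent is the \emph{two-point} estimate $|\gamma'(s)-\gamma'(t)|\le|\Pi_{\gamma(s)}-\Pi_{\gamma(t)}|$ (projections onto the normal spaces), combined with
\begin{align*}
\Bigl|(\Pi_{\gamma(s)}-\Pi_{\gamma(t)})\Bigl(\tfrac{\gamma(s)-\gamma(t)}{|\gamma(s)-\gamma(t)|}\Bigr)\Bigr|
\le \frac{|\gamma(s)-\gamma(t)|}{2}\Bigl(\frac{1}{r_{\rm tp}^{\gamma}(\gamma(s),\gamma(t))}+\frac{1}{r_{\rm tp}^{\gamma}(\gamma(t),\gamma(s))}\Bigr),
\end{align*}
not a pointwise domination of $|\gamma'(t)-\gamma'(s)|$ by an integral average of inverse tangent-point radii over parameters between $s$ and $t$; the latter is unsubstantiated for curves that are merely $C^1$ with finite energy. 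More importantly, the first inequality above holds only while the tangent has turned by at most a right angle, i.e.\ on a diagonal strip $\{|s-t|\le\delta\}$ with $\delta$ characterized by $|\gamma'(s)-\gamma'(t)|\le\sqrt2$. Hence the energy controls the Gagliardo seminorm only on this strip, the off-diagonal remainder contributes $L^2/\delta^p$, and the entire second half of the paper's proof is a Campanato/Morrey-type iteration establishing the a priori modulus $|\gamma'(s)-\gamma'(t)|\le C(\alpha)\,E_{\rm tp}^p(\gamma)^{1/p}|s-t|^{\alpha}$ with $\alpha=1-\tfrac2p$ on the strip, which in turn yields a lower bound for $\delta$ depending only on $p$ and $M$. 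Your sketch omits both the restriction to the strip and the uniform bound on $\delta$; this is the main content of the proposition, and "Fubini converts the triple integral into $C\,E_{\rm tp}^p$" does not survive these obstacles.

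Two further points. The bi-Lipschitz lower bound $|\gamma(t)-\gamma(s)|\ge c|t-s|$ that you call crucial is not needed for this implication: the comparison requires only $|\gamma(s)-\gamma(t)|\le|s-t|$, which is free from the arc-length parametrization (the lower bound enters in the \emph{converse} direction of Blatt's equivalence, and obtaining it with a constant depending only on $p,M$ would anyway require the very $C^{1,\alpha}$ estimate you are trying to bypass). The scaling argument for $L$-independence is also circular: what the argument actually produces is $[\gamma']^p\le E_{\rm tp}^p(\gamma)+L^2/\delta^p$, which is not homogeneous under $\gamma\mapsto\lambda\gamma(\cdot/\lambda)$, so one cannot reduce to $L=1$; the purely homogeneous bound $[\gamma']^p\le C(p)E_{\rm tp}^p$ is essentially the full theorem, not an input to it. Your remarks on the endpoint adaptation (replacing two-sided by one-sided neighbourhoods $B_r(s)\cap(0,L)$ at the cost of harmless constants) are correct and agree with what the paper does.
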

\begin{proof} 
    By \cite[Theorem 1.1]{STRZELECKI_2012} the image $\Gamma = \gamma([0,L])$ is a one dimensional embedded submanifold that has a boundary because of $R(\gamma)\leq M$.

    The additional regularity of the parametrization is shown as in \cite[Section 2]{Blatt2013} or \cite[Proposition 2.5]{blatt_regularity_2012} using open curves instead of closed ones. We will state the proof again for completeness.
    Given $\gamma \in C^{1}([0,L];\R^d)$ let $0<\delta \leq L$ be the biggest constant such that for all $s,t\in[0,L]$ with $|s-t|\leq \delta$
    \begin{align*}
        |\gamma'(s) - \gamma'(t)| \leq \frac{\sqrt{2}}{2} .
    \end{align*}
    Note that, by continuity of $\gamma'$, either $\delta=L$ or there exists a pair $s^*,t^*\in[0,L]$ with $|s^*-t^*| = \delta$ and $|\gamma'(s^*) - \gamma'(t^*) | = \frac{\sqrt{2}}{2}$. As in the proof of \cite[Proposition 2.5]{blatt_regularity_2012} for $|s-t|\leq \delta$ it holds that
    {\begin{align}
       \frac{3}{4} |s-t| |\gamma'(s) - \gamma'(t)| \leq |\left(\Pi_{\gamma(s)} - \Pi_{\gamma(t)}\right)(\gamma(s) - \gamma(t))|, 
    \end{align}
    }
    where
    \begin{align}
    \Pi_{\gamma(\tau)}v = v - \skp{v, \gamma'(\tau)}\gamma'(\tau)
    \end{align}
    is the projection onto the normal space to $\Gamma$ at $\gamma(\tau)$ for $\tau\in[0,L]$.
    
    Now, following e.g. \cite{blatt_regularity_2012} we are able to give an explicit representation of the tangent point radius as
    \begin{align}\label{eq:repradius}
        r_{\rm tp}^{\gamma}(\gamma(s), \gamma(y)) = \frac{|\gamma(s) - \gamma(t)|^2}{2{\rm dist}(\gamma(s)-\gamma(t), T_{\gamma(s)}\Gamma)} = \frac{|\gamma(s) - \gamma(t)|^2}{2|\Pi_{\gamma(s)}\left( \gamma(s) - \gamma(t)\right)|}.
    \end{align}
    Therefore we can estimate the energy $E_{\rm tp}^p(\gamma)$ by
    \begin{align}
        E^p_{\rm tp}(\gamma) &\geq \int_0^L \int_0^L \frac{2^{p}|\Pi_{\gamma(s)}\left( \gamma(s) - \gamma(t)\right)|^{p}}{|\gamma(s) - \gamma(t)|^{2p}} \, ds dt \nonumber\\
        &= \frac{2^p}{2}\left(\int_0^L \int_0^L \frac{|\Pi_{\gamma(s)}\left( \gamma(s) - \gamma(t)\right)|^{p}}{|\gamma(s) - \gamma(t)|^{2p}} \, ds dt + \int_0^L \int_0^L \frac{|\Pi_{\gamma(t)}\left( \gamma(t) - \gamma(s)\right)|^{p}}{|\gamma(s) - \gamma(t)|^{2p}} \, ds dt \right)  \nonumber\\
        &\geq 2^{p-1}\left(\int_0^L \int\limits_{\{|s-t|\leq\delta\}} \frac{|\Pi_{\gamma(s)}\left( \gamma(s) - \gamma(t)\right)|^{p}}{|\gamma(s) - \gamma(t)|^{2p}} \, ds dt + \int_0^1 \int\limits_{\{|s-t|\leq\delta\}} \frac{|\Pi_{\gamma(t)}\left( \gamma(t) - \gamma(s)\right)|^{p}}{|\gamma(s) - \gamma(t)|^{2p}} \, ds dt \right)  \nonumber\\
        &\geq \int_0^L \int\limits_{\{|s-t|\leq\delta\}} \frac{|(\Pi_{\gamma(s)} - \Pi_{\gamma(t)})(\gamma(s) - \gamma(t))|^p}{|\gamma(s) - \gamma(t)|^{2p}}\, ds dt  \nonumber\\
        &\geq \int_0^L \int\limits_{\{|s-t|\leq\delta\}} \left( \frac{3}{4} \right)^p |s-t|^p \frac{|\gamma'(s) - \gamma'(t)|^p}{|\gamma(s) - \gamma(t)|^{2p}}\, ds dt  \nonumber\\
        &= \left( \frac{3}{4} \right)^p \int_0^L \int\limits_{\{|s-t|\leq\delta\}} \frac{|\gamma'(s) - \gamma'(t)|^p}{|s-t|^{p}} \left( \frac{|s-t|}{|\gamma(s) - \gamma(t)|} \right)^{2p}\, ds dt  \nonumber\\
        &\geq  \left( \frac{3}{4} \right)^p\int_0^L \int\limits_{\{|s-t|\leq\delta\}} \frac{|\gamma'(s) - \gamma'(t)|^p}{|s-t|^{p}}\, ds dt, \label{eq:estdelta}
    \end{align}
    where the last inequality follows because of the choice of arclength parametrization which gives $|\gamma(s) - \gamma(t)| \leq |s-t|$.
    In conclusion we obtain 
    \begin{align}
        \int_0^L \int_0^L \frac{|\gamma'(s) - \gamma'(t)|^p}{|s-t|^p}\, ds dt &= \int_0^L \int_{\{|s-t|\leq \delta\}} \frac{|\gamma'(s) - \gamma'(t)|^p}{|s-t|^p}\, ds dt + \int_0^L \int_{\{|s-t|> \delta\}} \frac{|\gamma'(s) - \gamma'(t)|^p}{|s-t|^p}\, ds dt \nonumber \\
        &\leq \left( \frac{3}{4} \right)^pE^p_{\rm tp}(\gamma) + \int_0^L \int\limits_{\{|s-t|>\delta\}} \frac{2}{|s-t|^p}\, ds dt\nonumber \\
        &< \left( \frac{3}{4} \right)^p E^p_{\rm tp}(\gamma) + \frac{2L^2}{\delta^p}\label{eq:boundelta}
    \end{align}
    implying $\gamma \in W^{2-\frac{1}{p}, p}([0,L];\R^d)$.

    In order to conclude, we need to find a uniform lower bound for $\delta>0$.
    In the following, we will assume that $\delta < L$ (otherwise the result is immediate) and we will show that for a suitable constant $C(\alpha)>0$ the estimate
    \begin{align}{\label{eq:supbound}}
        |\gamma'(s) - \gamma'(t)| &\leq C(\alpha) E_{\rm tp}^p (\gamma)^\frac{1}{p} |s-t|^\alpha
    \end{align}
    holds for every $s,t \in [0,L]$ such that $|s-t| \leq \delta$ where $\alpha = 1- \frac{2}{p}$. This estimate would imply
    \begin{align*}
        \frac{\sqrt{2}}{2} = |\gamma'(s^*) - \gamma'(t^*)| &\leq C(\alpha) E_{\rm tp}^p (\gamma)^\frac{1}{p} \delta^\alpha  \leq C(\alpha) M^\frac{1}{p}\delta^\alpha
    \end{align*}
    and therefore the lower bound  
    \begin{align*}
        \delta &\geq \left( \frac{\sqrt{2} }{2C(\alpha)} \right)^\frac{1}{\alpha} M^q
    \end{align*}
     with $q = \frac{1}{p\alpha} = \frac{1}{p-2}$. Together with \eqref{eq:boundelta} this gives the desired estimate \eqref{eq:estimate}.
    
    For $s\in[0,L]$ and $r>0$ we denote the integral mean by 
    \begin{equation*}
        \gamma'_{B_r(s)} := \frac{1}{|B_r(s) \cap (0,L)|}\int\limits_{B_r(s)  \cap (0,L)} \gamma'(y) \, dy
    \end{equation*}
    for $B_r(s)$ denoting the open ball of radius $r$ centered at $s$. Note that $2r \geq |B_r(s) \cap (0,L)|\geq r$ for $r\in[0,L]$.
    The proof of \eqref{eq:supbound} is divided into three steps. The first step is an estimate of $\int_{B_r(s)\cap(0,L)} |\gamma'(t) - \gamma'_{B_r(s)}| \, dy$ from above for $r \in (0,\frac{\delta}{2})$. 
    Then, we will use this bound to estimate $|\gamma'(\cdot+\tau) - \gamma'(\cdot)|$ for $\tau<\frac{\delta}{4}$ and in the last step, we extend this estimate for arbitrary $\tau\leq \delta$. 
    
    \textbf{Step 1:}
    For any $s\in[0,L]$, $r\in(0,\frac \delta 2)$ and a generic constant $C>0$ only depending on $p$ it holds that 
    \begin{align*}
        &\frac{1}{| B_r(s) \cap (0,L) |} \int\limits_{B_r(s) \cap (0,L)}|\gamma'(y) - \gamma'_{B_r(s)}|  \, dy \\
        =& \frac{1}{| B_r(s) \cap (0,L) |} \int\limits_{B_r(s) \cap (0,L)}\left|\gamma'(y) - \frac{1}{| B_r(s) \cap (0,L) |}\int\limits_{B_r(s) \cap (0,L)} \gamma'(z) \, dz \right| \, dy\\
        \leq& \frac{1}{| B_r(s) \cap (0,L) |^2} \int\limits_{B_r(s) \cap (0,L)} \int\limits_{B_r(s) \cap (0,L)} |\gamma'(y) - \gamma'(z) | \, dy dz\\
        \leq& \left( \frac{1}{| B_r(s) \cap (0,L) |^2} \int\limits_{B_r(s) \cap (0,L)} \int\limits_{B_r(s) \cap (0,L)} |\gamma'(y) - \gamma'(z) |^p \, dy dz \right)^{\frac{1}{p}}\\
        \leq& 2| B_r(s) \cap (0,L) |^{1-\frac{2}{p}} \left(\, \int\limits_{B_r(s) \cap (0,L)} \int\limits_{B_r(s) \cap (0,L)} \frac{ |\gamma'(y) - \gamma'(z) |^p}{|y-z|^p} \, dy dz \right)^{\frac{1}{p}}\\
        \leq& C| B_r(s) \cap (0,L) |^{\alpha} \left(E_{\rm tp}^p(\gamma)\right)^{\frac{1}{p}},
    \end{align*}
    where we used the Hölder inequality together with the estimate \eqref{eq:estdelta} that is valid since $r\in(0,\frac \delta 2)$. 
    We thus obtain that for every $s\in [0,L]$ and every $r\in(0,\frac \delta 2)$
    \begin{align}\label{eq:intbound}
        \int\limits_{B_r(s) \cap (0,L)}|\gamma'(y) - \gamma'_{B_r(s)}|  \, dy &\leq C| B_r(s) \cap (0,L) |^{\alpha+1} \left(E_{\rm tp}^p(\gamma)\right)^{\frac{1}{p}}.
    \end{align}
    It remains to relate the left-hand side of \eqref{eq:intbound} to the supremum norm using standard Campanato estimates. 

    \textbf{Step 2:} As $\gamma\in C^1([0,L];\R^d)$, every $s\in[0,L]$ is a Lebesgue point of $\gamma'$. Therefore, for any $s,t\in[0,L]$ with $R:= |s-t| < \frac{\delta}{4}$ we obtain 
    \begin{align}
        |\gamma'(s) - \gamma'(t)| &\leq \left| \sum\limits_{k=0}^{+\infty} \left( \gamma'_{B_{R2^{-k}}(s)} - \gamma'_{B_{R2^{1-k}}(s)} + \gamma'_{B_{R2^{1-k}}(t)} - \gamma'_{B_{R2^{-k}}(t)}  \right) + \gamma'_{B_{2R}(s)} - \gamma'_{B_{2R}(t)} \right| \notag\\
        &\leq \sum\limits_{k=0}^{+\infty} \left| \gamma'_{B_{R2^{-k}}(s)} - \gamma'_{B_{R2^{1-k}}(s)}\right| + \sum\limits_{k=0}^{+\infty} \left| \gamma'_{B_{R2^{-k}}(t)} - \gamma'_{B_{R2^{1-k}}(t)}\right| + \left| \gamma'_{B_{2R}(s)} - \gamma'_{B_{2R}(t)} \right|. \label{eq:seriesexp}
    \end{align}
    Moreover, by \eqref{eq:intbound} it holds that 
    \begin{align*}
        &\left| \gamma'_{B_{2R}(s)} - \gamma'_{B_{2R}(t)} \right| \\
        \leq& \frac{1}{| B_{2R}(s) \cap B_{2R}(t) \cap(0,L) |} \left| \int\limits_{B_{2R}(s) \cap B_{2R}(t) \cap(0,L) } \gamma'_{B_{2R}(s)} - \gamma'(z) + \gamma'(z) - \gamma'_{B_{2R}(t)} \, dz \right|\\
        \leq&  \frac{1}{| B_{2R}(s) \cap B_{2R}(t) \cap(0,L) |}\left( \int\limits_{B_{2R}(s)\cap(0,L) } \left|  \gamma'_{B_{2R}(s)} - \gamma'(z) \right|\, dz +  \int\limits_{B_{2R}(t)\cap(0,L) } \left| \gamma'(z) - \gamma'_{B_{2R}(t)} \right| \, dz \right)\\
        \leq& \frac{C}{| B_{2R}(s) \cap B_{2R}(t) \cap(0,L) |}\left( |B_{2R}(s) \cap (0,L)|^{\alpha+1} + |B_{2R}(t) \cap (0,L)|^{\alpha+1}  \right) \left(E_{\rm tp}^p(\gamma)\right)^{\frac{1}{p}}. 
    \end{align*}
    As $|B_{2R}(s) \cap (0,L)| \leq 4R = 4|s-t|$ and $|B_{2R}(s) \cap B_{2R}(t) \cap (0,L)| \geq 2R = 2|s-t|$ we get
    \begin{align}
        \left| \gamma'_{B_{2R}(s)} - \gamma'_{B_{2R}(t)} \right| &\leq 2^{2\alpha+2}C|s-t|^{\alpha} \left(E_{\rm tp}^p(\gamma)\right)^{\frac{1}{p}}. \label{eq:diffpoints}
    \end{align}
    Similarly we obtain
    \begin{align}
        \left| \gamma'_{B_{R2^{-k}}(s)} - \gamma'_{B_{R2^{1-k}}(s)} \right| &\leq \frac{C}{|B_{R2^{-k}}(s)\cap(0,L)|}\left( |B_{R2^{-k}}(s)\cap(0,L)|^{\alpha+1} + |B_{R2^{1-k}}(s)\cap(0,L)|^{\alpha+1} \right) \left(E_{\rm tp}^p(\gamma)\right)^{\frac{1}{p}} \notag \\
        &\leq 2^{\alpha+1}(1+2^{\alpha+1})C \left(R2^{-k}\right)^{\alpha} \left(E_{\rm tp}^p(\gamma)\right)^{\frac{1}{p}} \label{eq:diffrad}
    \end{align}
    for $R < \frac{\delta}{4}$ since $|B_{R2^{1-k}}(s)\cap(0,L)| \leq R2^{1-k}$ and $|B_{R2^{-k}}(s)\cap(0,L)| \geq R2^{-k}$ in this case.
    Together, \eqref{eq:seriesexp}, \eqref{eq:diffpoints} and \eqref{eq:diffrad} lead to 
    \begin{align}\label{eq:smalldistbound}
        |\gamma'(s) - \gamma'(t)| \leq \left( 2^{\alpha+1}(1+2^{\alpha+1}) \sum\limits_{k=0}^{+\infty} 2^{-\alpha k} + 2^{2\alpha + 2}\right)C|s-t|^\alpha \left(E_{\rm tp}^p(\gamma)\right)^{\frac{1}{p}} = \Tilde{C}|s-t|^\alpha \left(E_{\rm tp}^p(\gamma)\right)^{\frac{1}{p}}
    \end{align}
    for a constant $\Tilde{C}>0$ only depending on $\alpha$.

    \textbf{Step 3:} Suppose now that  $s,t\in[0,L]$ are such that $\delta \geq |s-t| \geq \frac{\delta}{4}$. Then we can find a partition $s=:s_0 < s_1 < \ldots < s_N := t$ with $|s_k - s_{k-1}| < \frac{\delta}{4}$. Applying \eqref{eq:smalldistbound} and using the triangle inequality and Jensen's inequality we get
    \begin{align*}
        |\gamma'(s) - \gamma'(t)| &\leq  \Tilde{C} \sum\limits_{k=1}^{N} |s_k - s_{k-1}|^\alpha \left(E_{\rm tp}^p(\gamma)\right)^{\frac{1}{p}} \leq \Tilde{C} N^{1-\alpha} |s-t|^\alpha \left(E_{\rm tp}^p(\gamma)\right)^{\frac{1}{p}}.
    \end{align*}
    Note that, since $|s-t| \leq \delta$, $N$ is bounded from above independent of $\gamma$ (e.g. taking $N=5$ with points defined as $s_k=s + (t-s)\frac{k}{5}$), thus implying a uniform bound and concluding the proof.
\end{proof}

\subsection{Existence of minimizers for varying preferential path}\label{WellPsdnVP}

We first generalize the compactness presented in Theorem \ref{thm:compact} and the lower semicontinuity result to sequences of measures defined on varying paths. This is the content of the next proposition.

\begin{proposition}[Compactness for varying paths]
\label{thm:compactVarPath}
Given $\gamma^n,\gamma : [0,1] \rightarrow \Omega$ regular, non self-intersecting $C^1$-curves and a sequence $(J_t^n,V_t^n,\rho_t^n,\mu_t^n,f_t^n,(G_t^0)^n,(G_t^1)^n) \in {\rm CE}(\Gamma^n)$ such that $\gamma^n \rightarrow \gamma$ uniformly and
\begin{align*}
    \sup_{n\in \mathbb{N}} \int_0^1 \calA_{\Gamma^n}( J_t^n,\mathcal{V}_t^n,\rho_t^n,\mu_t^n,f_t^n,(\mathcal{G}^0_t)^n,(\mathcal{G}_t^1)^n)dt < \infty,
\end{align*}
there exists $(J_t,V_t,\rho_t,\mu_t,f_t, G_t^0,G_t^1) \in \mathcal{D}_{\rm adm}(\Gamma)
$ such that, up to subsequences, 
\begin{itemize}
    \item[i)] $\rho^n_t \rightharpoonup \rho_t$ narrowly in $\Omega$, for every $t \in [0,1]$
    \item[ii)]  $\bar \mu^n_t  \rightharpoonup \bar \mu_t$ narrowly in $\Gamma$, for every  $t \in [0,1]$
\end{itemize} 
and 
\begin{itemize}
    \item[iii)] $J^n \rightharpoonup J \text{ narrowly in } [0,1] \times \Omega$
    \item[iv)] $\bar{\mathcal{V}}^n  \rightharpoonup \bar{\mathcal{V}}  \text{ narrowly in } [0,1] \times \Omega$
    \item[v)] $\bar f^n  \rightharpoonup \bar f  \text{ narrowly in } [0,1] \times \Omega$
    \item[vi)] $( \bar{\mathcal{G}}^i)^n \rightharpoonup  \bar{\mathcal{G}}^i \text{ narrowly in } [0,1] \times \Omega$ for $i=1,2$.
\end{itemize}
As before, the un-subscripted quantities denote the measures defined on the product spaces given by their respective disintegration.
\end{proposition}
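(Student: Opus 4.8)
The plan is to reduce the statement to the already-established compactness result of Theorem~\ref{thm:compact} by transporting the problem from the varying curves $\Gamma^n$ to the parameter interval $[0,1]$, and then reading the result back via the zero extensions. First I would pull back all the curve-supported objects: set $\mu^{\gamma^n}_t = (\gamma^n)^{-1}_\#\mu^n_t$, and similarly define $\mathcal{V}^{\gamma^n}_t$, $f^{\gamma^n}_t$, $(\mathcal{G}^i_t)^{\gamma^n}$ on $[0,1]$ (resp.\ on $\{0\},\{1\}$). By Assumption~\ref{ass:additional}(1) the mobility $m_\Gamma$ and hence $\Psi_\Gamma$ do not depend on the curve, so the energy bound $\sup_n\int_0^1 \calA_{\Gamma^n}\,dt<\infty$ translates (using \eqref{eq:geometry} and the change of variables induced by $\gamma^n$, together with Assumption~\ref{ass:additional}(2) bounding $\|\lambda_{\Gamma^n}\|$ and $\|\lambda_{E^n}\|$ uniformly) into a uniform bound on the pulled-back action. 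The bulk part $(\rho^n_t,J^n_t)$ lives on the fixed domain $\Omega$ and needs no transport.

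Next I would run exactly the estimates from Proposition~\ref{ExistenceDensities}, Proposition~\ref{prop:holder} and Theorem~\ref{thm:compact} for the pulled-back tuples: Proposition~\ref{ExistenceDensities} gives the uniform $L^\infty$ density bounds $a^\Gamma\le d\mu^{\gamma^n}_t/d\lambda^* \le b^\Gamma$ (where $\lambda^*$ is a dominating reference measure on $[0,1]$, which exists uniformly by Assumption~\ref{ass:additional}(2)); the Hölder estimate~\eqref{eq:est2} then gives equi-$\tfrac12$-Hölder continuity in time of $t\mapsto \mu^{\gamma^n}_t$ in $C^1([0,1])^*$; and the generalized Arzelà--Ascoli theorem (\cite[Proposition A.4]{bredies2020optimal}) produces a narrowly continuous limit $\mu^*_t\in M([0,1])$ with $\mu^{\gamma^n}_t\rightharpoonup\mu^*_t$ for every $t$. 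The same pattern gives $\rho^n_t\rightharpoonup\rho_t$ for every $t$, and the total-variation bounds coming from $\Psi_\Gamma\ge |\cdot|^2/m_{\sup}$ plus Prokhorov give narrow limits $\mathcal{V}^*$, $f^*$, $(\mathcal{G}^i)^*$ of the pulled-back momenta on $[0,1]$, with the disintegration argument of Theorem~\ref{thm:compact} ensuring they lie in $M_{[0,1]}$.

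Then I would push everything forward by the limiting curve $\gamma$: define $\mu_t = \gamma_\#\mu^*_t$, $V_t = \tau_\Gamma\,\gamma_\#\mathcal{V}^*$, $f_t = \gamma_\# f^*$, $G^i_t = \tau_\Gamma(\gamma(i))\,(\mathcal{G}^i)^*$. The key point connecting the pulled-back convergences to statements~(ii),(iv),(v),(vi) is that for a continuous test function $\varphi\in C(\Omega)$,
\begin{align*}
\int_\Gamma \varphi\, d\bar\mu^n_t = \int_0^1 \varphi(\gamma^n(s))\, d\mu^{\gamma^n}_t(s),
\end{align*}
and since $\gamma^n\to\gamma$ uniformly and $\mu^{\gamma^n}_t\rightharpoonup\mu^*_t$, the right-hand side converges to $\int_0^1\varphi(\gamma(s))\,d\mu^*_t(s)=\int_\Gamma\varphi\,d\bar\mu_t$; the same uniform-convergence-composed-with-narrow-convergence argument handles $\bar{\mathcal V}^n$, $\bar f^n$, $(\bar{\mathcal G}^i)^n$ (for the momenta one also uses that the tangent vectors $\tau_{\Gamma^n}$ need not converge, but since we only claim convergence of the \emph{zero extensions} $\bar{\mathcal V}^n$, which are scalar-density objects times $\tau_{\Gamma^n}(p)$, care is needed — see below). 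Finally, passing to the limit in the weak formulations \eqref{eq:cont1}--\eqref{eq:cont2} written in pulled-back form, using that $\psi(t,s)=\varphi(t,\gamma^n(s))$ converges uniformly together with its $s$-derivative requires $\gamma^n\to\gamma$ in $C^1$, which is \emph{not} assumed here — only uniform convergence — so one must be careful to formulate membership in $\mathcal{D}_{\rm adm}(\Gamma)$ rather than in ${\rm CE}(\Gamma)$, consistent with the statement of the proposition.

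The main obstacle I anticipate is precisely this last point: the convergence $\gamma^n\to\gamma$ is only uniform, so the tangential structure (the vectors $\tau_{\Gamma^n}$ and the factors $|(\gamma^n)'|^{-1}$ appearing in \eqref{eq:cont2}) does not pass to the limit, and consequently one can only conclude $(J_t,V_t,\rho_t,\mu_t,f_t,G^0_t,G^1_t)\in\mathcal{D}_{\rm adm}(\Gamma)$, not the continuity-equation constraint — exactly as the proposition states. Upgrading to ${\rm CE}(\Gamma)$ (which will be needed later for the existence theorem) must wait until the Tangent-Point energy bound from Proposition~\ref{EnergySpaces} is brought in to force $\gamma^n\to\gamma$ in $C^1$ via the compact embedding $W^{2-1/p,p}\hookrightarrow C^1$ of Theorem~\ref{CmpctEmbedding}. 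For the present proposition, the remaining delicate technical verification is that the uniform bounds on the dominating reference measures and mobilities (Assumption~\ref{ass:additional}(2),(3)) genuinely yield a \emph{single} reference measure on $[0,1]$ dominating all $\mu^{\gamma^n}_t$ with uniform density bounds — this is where Assumption~\ref{ass:additional}(3), $\lambda_{\Gamma^n}\rightharpoonup\lambda_\Gamma$, does the work.
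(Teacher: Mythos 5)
Your proposal is correct and follows essentially the same route as the paper: uniform density/total-variation bounds from Proposition~\ref{ExistenceDensities} and Assumption~\ref{ass:additional}(2), the $n$-independent H\"older estimate plus the generalized Arzel\`a--Ascoli theorem applied to the pullbacks $(\mu^n_t)^{\gamma^n}$, Prokhorov for the momenta, and the composition of the uniform convergence of $\gamma^n$ with narrow convergence to identify the limits of the zero extensions (the paper treats $\bar{\mathcal{V}}^n$, $\bar f^n$, $(\bar{\mathcal{G}}^i)^n$ directly in $M([0,1]\times\Omega)$ and then restricts the limit, whose support lies in $X_\Gamma$, which is a cosmetic variant of your pull-back/push-forward). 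One small correction: Assumption~\ref{ass:additional}(3), i.e.\ $\lambda_{\Gamma^n}\rightharpoonup\lambda_\Gamma$, is not what drives the compactness here --- only the uniform mass bounds of item (2) are needed, item (3) being used for the lower-semicontinuity statement of Proposition~\ref{prop:LowSemiContvar}.
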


\begin{proof}
    Since it does not depend on $\Gamma$, the proof of $i)$ and $iii)$ is similar to the case of fixed preferential paths. We now prove $ii)$. From the equiboundedness of the energy we can assume (up to extracting a subsequence) that $\mu^n_t \ll \lambda_{\Gamma^n}$  and thanks to Proposition \ref{ExistenceDensities} and Assumption \ref{ass:additional} it holds that $0 \leq \frac{d\mu^n_t}{d\lambda_{\Gamma^n}} \leq b^{\Gamma}$ for every $n\in \mathbb{N}$ and every $t \in [0,1]$. In particular, from Assumption \ref{ass:additional} we get
    \begin{align}
        \sup_{t} \|\bar \mu_t^n \|_{M(\Omega)} < \infty
    \end{align}
    independent of $n$. Moreover, applying Proposition \ref{prop:holder} implies 
    \begin{align}
        \|\mu_t^{\gamma^n} - \mu_s^{\gamma^n}\|_{C^1([0,1])^*} \leq C|t-s|^{1/2}
    \end{align}
    where the constant $C$ can be chosen independent of $n$ by Assumption \ref{ass:additional}.
  Therefore, thanks to Proposition \ref{ExistenceDensities} and by applying a generalized Arzelà-Ascoli theorem (\cite[Proposition A.4]{bredies2020optimal}) we conclude that there exists a narrowly continuous family of Borel measures $\mu^*_t \in M([0,1])$ such that 
     \begin{align}\label{eq:convweak}
     (\mu^n_t)^{\gamma^n} \rightharpoonup \mu^*_t, \quad \text{for every } t \in [0,1].  
     \end{align}
     Define now $\mu_t = \gamma_\# \mu^*_t \in M(\Gamma)$ and note that $\mu_t^\gamma = \mu^*_t$. Moreover, $\mu_t^n \res \Gamma_n \rightharpoonup \mu_t \res \Gamma$ for every $t \in [0,1]$. Indeed, for every $\varphi \in C(\Omega)$ it holds that 
    \begin{align*}
        \lim_n &  \left|\int_{\Gamma_n} \varphi(x) d\mu^n_t - \int_{\Gamma} \varphi(x) d\mu_t\right| = \lim_n \left| \int_{\Gamma} \varphi((\gamma \circ \gamma^{-1})(x)) d\mu_t - \int_{\Gamma^n} \varphi((\gamma^n \circ (\gamma^n)^{-1})(x)) d\mu^n_t\right|\\
        & = \lim_n\left| \int_0^1 \varphi(\gamma(s)) d(\mu^{\gamma}_t) - \int_0^1 \varphi(\gamma^n(s)) d(\mu_t^n)^{\gamma^n}\right| \\
        & \leq \lim_n\left| \int_0^1 \varphi(\gamma(s)) d(\mu^{\gamma}_t) - \int_0^1 \varphi(\gamma(s)) d(\mu_t^n)^{\gamma^n}\right| + \left| \int_0^1 |\varphi(\gamma(s)) - \varphi(\gamma^n(s))| d(\mu_t^n)^{\gamma^n}\right| \\
        & \leq   \lim_n\left| \int_0^1 \varphi(\gamma(s)) d(\mu^{\gamma}_t) - \int_0^1 \varphi(\gamma(s)) d(\mu_t^n)^{\gamma^n}\right| + \sup_{s \in [0,1]}  |\varphi(\gamma(s)) - \varphi(\gamma^n(s))| \|(\mu_t^n)^{\gamma^n}\|_{M([0,1])} = 0
    \end{align*}
   due to \eqref{eq:convweak}, the uniform convergence of $\gamma^n$ to $\gamma$ and the uniform continuity of $\varphi$.  
We now prove $iv)$.  Note that $|\mathcal{V}_t^n| \ll \lambda_{\Gamma^n}$ for almost every $t \in [0,1]$. Then 
    \begin{align}
        \int_{X_{\Gamma^n}} \left|\frac{d\mathcal{V}^n_t}{d\lambda_{\Gamma^n}}\right| d \lambda_{\Gamma^n}\, dt &\leq \int_{X_{\Gamma^n}}  \sqrt{m_{\Gamma}\left(\frac{d\mu^n_t}{d\lambda_\Gamma}\right)}\sqrt{\Psi_{\Gamma}\left(\frac{d\mu^n_t}{d\lambda_\Gamma},\frac{d\mathcal{V}^n_t}{d\lambda_{\Gamma^n}}\right) }d\lambda_\Gamma^n\, dt \\
        & \leq \left(\int_{X_{\Gamma^n}}  m_{\Gamma}\left(\frac{d\mu^n_t}{d\lambda_\Gamma^n}\right) d\lambda_{\Gamma^n} dt\right)^{1/2}
       \left(\int_{X_\Gamma}   \Psi_{\Gamma}\left(\frac{d\mu^n_t}{d\lambda_\Gamma^n},\frac{d\mathcal{V}^n_t}{d\lambda_{\Gamma^n}}\right) d\lambda_{\Gamma^n} dt\right)^{1/2}.
    \end{align}
 Due to Assumption \ref{ass:additional} it holds that $0\leq \frac{d\mu^n_t}{d\lambda_\Gamma^n} \leq b^{\Gamma}$. From Remark \ref{rem:bound}, the equiboundedness of the energy and item $2$ in Assumption \ref{ass:additional}, it follows that $\sup_n\|\bar{\mathcal{V}}^n\|_{M(X_\Omega)} = \sup_n |\mathcal{V}^n|(X_{\Gamma}) <\infty$. In particular, there exists $\tilde{\mathcal{V}} \in M(X_\Omega)$ such that $\bar{\mathcal{V}}^n \rightharpoonup \tilde{\mathcal{V}}$. Note that, thanks to the uniform convergence of $\gamma^n$ to $\gamma$, the measure $\tilde{\mathcal{V}}$ is supported in $X_\Gamma$. Therefore, by defining $\mathcal{V} := \tilde{\mathcal{V}} \res X_\Gamma \in M(X_\Gamma)$ it is immediate to see that $\bar{\mathcal{V}}^n   \rightharpoonup \bar{\mathcal{V}}$ as we wanted to prove. The proof of $v)$ and $vi)$ follows by similar arguments.

 Finally, using the uniform bounds on the total variation of $\mathcal{V}_t^n$, $f_t^n$ and $(\mathcal{G}_t^i)^n$, following the same arguments as in Theorem \ref{thm:compact} (see also \cite[Lemma 4.5]{Dolbeault_2008}), we obtain that  
 $V$,$f$ and $G^i$ disintegrate in time and thus $(J_t,V_t,\rho_t,\mu_t,f_t, G_t^0,G_t^1) \in \mathcal{D}_{\rm adm}(\Gamma)$ concluding the proof.
   \end{proof}

   \begin{proposition}[Lower semicontinuity of the action functional for varying paths]\label{prop:LowSemiContvar}
    Let $\gamma^n,\gamma\in C^{1}([0,1];\Omega)$ be non self-intersecting, regular $C^1$- curves such that $\gamma^n \rightarrow \gamma$ uniformly. Assume additionally that $(J_t^n,\mathcal{V}_t^n,\rho_t^n,\mu_t^n,f_t^n,(\mathcal{G}_t^0)^n,(\mathcal{G}_t^1)^n) \in {\rm CE}(\Gamma^n)$ converges to $(J_t,V_t,\rho_t,\mu_t,f_t,G_t^0,G_t^1) \in {\rm CE}(\Gamma)$ according to the convergences stated in Theorem \ref{thm:compactVarPath}. Then it holds that 
    \begin{align}
        \liminf_n \int_0^1 \calA_{\Gamma^n}(J_t^n,\mathcal{V}_t^n,\rho_t^n,\mu_t^n,f_t^n,(\mathcal{G}^0_t)^n,(\mathcal{G}_t^1)^n)dt \geq \int_0^1 \calA_{\Gamma}( J_t,\mathcal{V}_t,\rho_t,\mu_t,f_t,\mathcal{G}^0_t,\mathcal{G}_t^1)dt.
    \end{align}
\end{proposition}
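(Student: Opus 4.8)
Here is the strategy I would follow. The statement is a lower semicontinuity result for the action, and the guiding idea is to push everything forward to the \emph{fixed} ambient space $X_\Omega=[0,1]\times\Omega$, where one is left with lower semicontinuity of integral functionals of measures \emph{with a reference measure that is itself varying}. I would split the time–integrated action as $\int_0^1\calA_{\Gamma^n}=\int_0^1\calA^\Omega_{\Gamma^n}+\int_0^1\calA^\Gamma_{\Gamma^n}+\int_0^1\calA^E_{\Gamma^n}$ following the three groups of terms in \eqref{eq:actionformula}, and treat the three pieces separately.

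\textbf{Step 1 (bulk term).} This term does not depend on $\gamma$. Setting $\rho:=\rho_t\otimes dt$ it equals $\int_{X_\Omega}\Psi_\Omega\bigl(\tfrac{d\rho}{d(\lambda_\Omega\otimes dt)},\tfrac{dJ}{d(\lambda_\Omega\otimes dt)}\bigr)\,d(\lambda_\Omega\otimes dt)$. By Proposition~\ref{ExistenceDensities} the densities $d\rho^n_t/d\lambda_\Omega$ are uniformly bounded, so $\rho^n_t\rightharpoonup\rho_t$ for every $t$ upgrades to $\rho^n\rightharpoonup\rho$ narrowly in $X_\Omega$; together with $J^n\rightharpoonup J$ this yields lower semicontinuity of this term exactly as in the fixed–path case (Remark~\ref{rem:lsc}).

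\textbf{Step 2 (curve and endpoint terms via zero extensions).} Using the zero extensions of Definition~\ref{def:ex} and the elementary fact that extending a measure and its reference measure by zero off $\Gamma$ does not change Radon–Nikodym densities, one rewrites, for every admissible tuple,
\begin{align*}
\int_0^1\!\!\int_{\Gamma^n}\!\Psi_\Gamma\!\Bigl(\tfrac{d\mu_t^n}{d\lambda_{\Gamma^n}},\tfrac{d\mathcal{V}_t^n}{d\lambda_{\Gamma^n}}\Bigr)d\lambda_{\Gamma^n}dt=\int_{X_\Omega}\!\Psi_\Gamma\!\Bigl(\tfrac{d\bar\mu^n}{d(\bar\lambda_{\Gamma^n}\otimes dt)},\tfrac{d\bar{\mathcal{V}}^n}{d(\bar\lambda_{\Gamma^n}\otimes dt)}\Bigr)d(\bar\lambda_{\Gamma^n}\otimes dt),
\end{align*}
with $\bar\mu^n:=\bar\mu^n_t\otimes dt$, and analogously for the $f$–term and (with $\lambda_E$ in place of $\lambda_\Gamma$) for the two endpoint terms. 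By Theorem~\ref{thm:compactVarPath} we have $\bar\mu^n\rightharpoonup\bar\mu$, $\bar{\mathcal{V}}^n\rightharpoonup\bar{\mathcal{V}}$, $\bar f^n\rightharpoonup\bar f$ and $(\bar{\mathcal{G}}^i)^n\rightharpoonup\bar{\mathcal{G}}^i$ narrowly in $X_\Omega$, while Assumption~\ref{ass:additional}(2)–(3) together with $\gamma^n\to\gamma$ uniformly give the uniform mass bounds and the narrow convergences $\bar\lambda_{\Gamma^n}\otimes dt\rightharpoonup\bar\lambda_{\Gamma}\otimes dt$ and $\bar\lambda_{E^n}\otimes dt\rightharpoonup\bar\lambda_{E}\otimes dt$.

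\textbf{Step 3 (joint lower semicontinuity with varying reference).} The point is then the following: since $\Psi_\Gamma$ is convex and lower semicontinuous (Remark~\ref{rem:lsc}), it coincides with its biconjugate, so
\begin{align*}
\Psi_\Gamma(z,w)=\sup_{(\alpha,\beta,\delta)\in\mathcal{K}}\bigl(\alpha z+\beta w-\delta\bigr),\qquad \mathcal{K}:=\operatorname{epi}\Psi_\Gamma^{*}\ \text{closed and convex}.
\end{align*}
Integrating this pointwise bound against bounded continuous selections $x\mapsto(\alpha(x),\beta(x),\delta(x))\in\mathcal{K}$ and invoking the standard duality/relaxation result for integral functionals of measures (the same tool underlying Remark~\ref{rem:lsc}), the map $(\sigma,\nu,\lambda)\mapsto\int_{X_\Omega}\Psi_\Gamma\bigl(\tfrac{d\sigma}{d\lambda},\tfrac{d\nu}{d\lambda}\bigr)\,d\lambda$ is a supremum of narrowly continuous affine functionals of the \emph{three} measures $\sigma,\nu,\lambda$, hence narrowly lower semicontinuous jointly in all of them; moreover, since the recession of $\Psi_\Gamma$ in its second argument is $+\infty$ off $\{0\}$, boundedness of the action forces $\bar{\mathcal{V}},\bar f\ll\bar\lambda_\Gamma\otimes dt$ in the limit, consistently with the objects produced in Theorem~\ref{thm:compactVarPath}. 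Applying this with $(\bar\mu^n,\bar{\mathcal{V}}^n,\bar\lambda_{\Gamma^n}\otimes dt)$, $(\bar\mu^n,\bar f^n,\bar\lambda_{\Gamma^n}\otimes dt)$, and the analogous endpoint triples gives $\liminf_n\int_0^1\calA^\Gamma_{\Gamma^n}\ge\int_0^1\calA^\Gamma_{\Gamma}$ and $\liminf_n\int_0^1\calA^E_{\Gamma^n}\ge\int_0^1\calA^E_{\Gamma}$; summing with Step~1 and using superadditivity of $\liminf$ concludes.

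\textbf{Main obstacle.} The delicate point is Step~3: unlike in the fixed–path Theorem~\ref{thm:compact}, the reference measure $\lambda_{\Gamma^n}$ also moves, so the classical narrow lower semicontinuity of the Benamou–Brenier functional with mobility (valid for a fixed reference) is not enough, and one genuinely needs joint lower semicontinuity in $(\sigma,\nu,\lambda)$ — this is exactly where the dual representation of $\Psi_\Gamma$ and Assumption~\ref{ass:additional} are used. A secondary technical nuisance is the endpoint term: one must argue that $\overline{\mu^n_t\mres\gamma^n(i)}$ converges narrowly to $\overline{\mu_t\mres\gamma(i)}$, for which the isolation of the two endpoints (guaranteed by non self-intersection, $\gamma(0)\neq\gamma(1)$) and the convergence of the endpoint reference masses from Assumption~\ref{ass:additional}(2)–(3) are invoked, together with the convergence $(\mu^n_t)^{\gamma^n}\rightharpoonup\mu^\gamma_t$ established in the proof of Theorem~\ref{thm:compactVarPath}.
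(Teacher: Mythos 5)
Your proposal is correct and takes essentially the same route as the paper: the paper's proof consists precisely of invoking Assumption~\ref{ass:additional} together with \cite[Theorem 2.2]{Lisini2010} — which is exactly the joint narrow lower semicontinuity in the triple (measure, momentum, reference measure) that you re-derive via the biconjugate representation of $\Psi_\Gamma$ — and then concluding with Fatou's lemma. Your space-time formulation on $X_\Omega$ is, if anything, slightly more careful, since the momenta in Proposition~\ref{thm:compactVarPath} only converge as time-integrated measures rather than for each fixed $t$.
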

\begin{proof}
Fix $t \in [0,1]$. It holds that 
\begin{align}
    \liminf_n \calA_{\Gamma^n}(J_t^n,\mathcal{V}_t^n,\rho_t^n,\mu_t^n,f_t^n,(\mathcal{G}^0_t)^n,(\mathcal{G}_t^1)^n)dt & \geq \calA_{\Gamma}( J_t,\mathcal{V}_t,\rho_t,\mu_t,f_t,\mathcal{G}^0_t,\mathcal{G}_t^1)\nonumber 
\end{align}
thanks to Assumption \ref{ass:additional} and \cite[Section 2, Theorem 1]{Lisini2010}. The statement then follows from the application of Fatou's lemma since $\mathcal{A}_{\Gamma^n}$ and $\mathcal{A}_{\Gamma}$ are non-negative. 
\end{proof}

We now prove the lower semicontinuity of the regularizer $R$. 

\begin{proposition}[Lower semicontinuity of the regularizer]\label{LowSemiCont}
    Let $\gamma^n\in C^{1}([0,1];\Omega)$ be a given family of curves with $\sup_{n\in \mathbb{N}}R(\gamma^n)\leq M$ for a constant $M>0$ and let $\gamma\in C^{1}([0,1];\Omega)$ be given such that $\gamma^n\to\gamma$ in $C^{1}([0,1];\Omega)$ as $n\to\infty$. Then it holds that
    \begin{align*}
        R(\gamma) &\leq \liminf\limits_{n\to+\infty} R(\gamma^n).
    \end{align*}
\end{proposition}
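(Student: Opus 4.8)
The plan is to prove lower semicontinuity of each of the three summands of $R$ separately; the first two are elementary and the Tangent-Point term is the substantive one. We may assume $\liminf_n R(\gamma^n)<\infty$ (otherwise there is nothing to prove) and pass to a subsequence (not relabelled) along which $R(\gamma^n)\le M$ and $R(\gamma^n)\to\liminf_n R(\gamma^n)$. Since $\gamma^n\to\gamma$ in $C^1([0,1];\Omega)$ we have $\gamma^n\to\gamma$ and $(\gamma^n)'\to\gamma'$ uniformly, so $\int_0^1|(\gamma^n)'|\,dt\to\int_0^1|\gamma'|\,dt$ and the length term is continuous. Likewise $|\gamma^n(0)-\gamma^n(1)|\to|\gamma(0)-\gamma(1)|$; as $t\mapsto-\log t$ is continuous on $(0,\infty)$ and tends to $+\infty$ as $t\to0^+$, it is lower semicontinuous on $[0,\infty)$ (with $-\log 0:=+\infty$), hence $-\log|\gamma(0)-\gamma(1)|\le\liminf_n\bigl(-\log|\gamma^n(0)-\gamma^n(1)|\bigr)$. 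In fact, since $E^p_{\rm tp},\int_0^1|\gamma'|\ge0$ and $R(\gamma^n)\le M$, we get $|\gamma^n(0)-\gamma^n(1)|\ge e^{-M}$, so $\gamma(0)\ne\gamma(1)$ in the limit and this term is actually continuous there. It remains to prove $E^p_{\rm tp}(\gamma)\le\liminf_n E^p_{\rm tp}(\gamma^n)$.

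For this I would use the representation \eqref{eq:repradius} of the tangent-point radius: writing $\Pi_v$ for the orthogonal projection onto $v^\perp$, set
\[
g_n(s,t):=2^p\,\frac{|\Pi_{(\gamma^n)'(s)}\bigl(\gamma^n(s)-\gamma^n(t)\bigr)|^p\,|(\gamma^n)'(s)|\,|(\gamma^n)'(t)|}{|\gamma^n(s)-\gamma^n(t)|^{2p}},\qquad E^p_{\rm tp}(\gamma^n)=\int_0^1\!\!\int_0^1 g_n(s,t)\,ds\,dt,
\]
and analogously $g$ for $\gamma$; each $g_n\ge0$. The crucial point is the pointwise a.e. convergence $g_n\to g$. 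Fix $(s,t)$ with $s\ne t$ and $\gamma(s)\ne\gamma(t)$: then $|\gamma^n(s)-\gamma^n(t)|\to|\gamma(s)-\gamma(t)|>0$, and from $|\Pi_v w|\le|w|$ one has $g_n(s,t)\le 2^p|(\gamma^n)'(s)|\,|(\gamma^n)'(t)|/|\gamma^n(s)-\gamma^n(t)|^p\to 2^p|\gamma'(s)|\,|\gamma'(t)|/|\gamma(s)-\gamma(t)|^p$; in particular if $\gamma'(s)=0$ or $\gamma'(t)=0$ then $g_n(s,t)\to0=g(s,t)$, whereas if $\gamma'(s),\gamma'(t)\ne0$ the map $(v,w)\mapsto\Pi_v w=w-\langle w,v\rangle v/|v|^2$ is continuous at $(\gamma'(s),\gamma(s)-\gamma(t))$, so every factor of $g_n(s,t)$ converges to that of $g(s,t)$. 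Hence $g_n\to g$ on $\{(s,t):\gamma(s)\ne\gamma(t)\}$. Granting that $N:=\{(s,t):s\ne t,\ \gamma(s)=\gamma(t)\}$ is Lebesgue-null — equivalently, that $\gamma$ is non-self-intersecting — Fatou's lemma yields $E^p_{\rm tp}(\gamma)=\iint g\le\liminf_n\iint g_n=\liminf_n E^p_{\rm tp}(\gamma^n)$, and summing the three estimates finishes the proof.

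The main obstacle is therefore the claim that the limit $\gamma$ is non-self-intersecting. This is exactly where $p>2$ and the uniform bound $E^p_{\rm tp}(\gamma^n)\le 2^pM$ are used: a self-touching $\gamma(s_0)=\gamma(t_0)$, $s_0\ne t_0$, would force $\varepsilon_n:=|\gamma^n(s_0)-\gamma^n(t_0)|\to0$, and one shows a near-self-contact at scale $\varepsilon_n$ makes $E^p_{\rm tp}(\gamma^n)\to+\infty$: near $(s_0,t_0)$ one has $g_n\gtrsim\varepsilon_n^{-p}$ on a parameter region whose area is bounded below in terms of $\varepsilon_n$ via the uniform $C^{1,\alpha}$ control on $\gamma^n$ supplied by Proposition \ref{EnergySpaces} (with $\alpha=1-\tfrac2p$), producing a contribution $\gtrsim\varepsilon_n^{2-p}\to\infty$. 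An alternative, and probably cleaner, route — which is natural in the intended application — is to first reparametrize each $\gamma^n$ by arc length (legitimate since $E^p_{\rm tp}$, the length term and the endpoint term depend only on the image $\Gamma^n$), use Proposition \ref{EnergySpaces} to obtain a uniform bound in $W^{2-\frac{1}{p},p}$, extract by Theorem \ref{CmpctEmbedding} a subsequence converging in $C^1$ whose limit lies in the same $W^{2-\frac{1}{p},p}$-ball, and invoke \cite[Theorem 1.4]{STRZELECKI_2012} to conclude that the limiting image is an embedded submanifold, hence non-self-intersecting. Either way, once non-self-intersection of $\gamma$ is secured the Fatou argument above closes the proof; everything else (continuity of the length term, lower semicontinuity of the $\log$-term, pointwise convergence of $g_n$ away from $N$) is routine.
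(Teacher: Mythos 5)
Your argument is essentially the paper's proof: the length term by uniform convergence of $(\gamma^n)'$, the endpoint term by noting the uniform bound forces $|\gamma^n(0)-\gamma^n(1)|\ge e^{-M}$ so that $-\log|\cdot|$ is in fact continuous along the sequence, and the Tangent-Point term by combining the representation \eqref{eq:repradius} with pointwise convergence of the integrand off a null set and Fatou's lemma. The one place you go beyond the paper is the null set itself: the paper only removes $\calN=\bigcup_n\{r_{\rm tp}^{\gamma^n}=0\}$, i.e.\ the self-intersection sets of the $\gamma^n$, and is silent on the set $N=\{(s,t):s\ne t,\ \gamma(s)=\gamma(t)\}$ where the pointwise convergence genuinely breaks down; you correctly flag this and propose to rule it out by showing the limit curve is embedded. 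Two remarks on how you close that gap. First, your second route (arc-length reparametrization, Proposition \ref{EnergySpaces}, \cite[Theorem 1.4]{STRZELECKI_2012}) is circular in the way the proposition is actually used: in Theorem \ref{thm:compn} the paper deduces non-self-intersection of the limit \emph{from} this lower semicontinuity statement, and applying \cite[Theorem 1.4]{STRZELECKI_2012} to $\gamma$ presupposes $E^p_{\rm tp}(\gamma)<\infty$, which is what is being proved. Second, neither heavy argument is needed: the only dangerous part of $N$ is $N\cap\{\gamma'(s)\ne0\ne\gamma'(t)\}$ (elsewhere your bound $g_n\le 2^p|(\gamma^n)'(s)|\,|(\gamma^n)'(t)|/|\gamma^n(s)-\gamma^n(t)|^{p}$ or the a.e.\ definition of the integrand takes care of things), and this set is automatically Lebesgue-null for any $C^1$ curve — where $\gamma'(t)\ne0$ the curve is locally injective, so for each fixed $s$ the fibre $\{t:\gamma(t)=\gamma(s),\ \gamma'(t)\ne0\}$ consists of isolated points and is countable, and Fubini finishes. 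With that observation your Fatou argument closes without any appeal to embeddedness of the limit, and the proof is complete; your first (blow-up) route is also viable but is substantially more work than the proposition requires.
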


\begin{proof}
    By $C^{1}([0,1];\Omega)$-convergence it holds that $\gamma^n \rightarrow \gamma$ and ${\gamma^n}' \rightarrow \gamma'$ uniformly. Therefore, we can apply the dominated convergence theorem to conclude the convergence of the length functionals. Moreover, by continuity and the uniform bound on $R$ it holds that $\log{|\gamma^n(0) - \gamma^n(1)|} \to \log{|\gamma(0) - \gamma(1)|}$. 
    It remains to show lower semicontinuity of the Tangent-Point energy. 
    It suffices to show pointwise convergence of the integrand almost everywhere, in particular outside of the diagonal of $[0,1]\times[0,1]$, in order to apply Fatou's lemma. Since $\sup_{n\in \mathbb{N}}E^p_{\rm tp}(\gamma^n)\leq M$ we know that for each $n\in\mathbb{N}$ the set $\{s,t\in[0,1]\ | \ r_{\rm tp}^{\gamma^n}(s,t) = 0 \}$ is of Lebesgue measure zero and thus the countable union 
    \begin{align*}
        \calN := \bigcup\limits_{n\in\mathbb{N}} \{s,t\in[0,1]\ | \ r_{\rm tp}^{\gamma^n}(s,t) = 0 \}
    \end{align*}
    is as well. 
    The continuity on $[0,1]^2\setminus \calN$ follows by observing that
    \begin{align*}
        \Pi_{\gamma^n(s)}(\gamma^n(s) - \gamma^n(t)) &= \gamma^n(s) - \gamma^n(t) - \skp{\gamma^n(s) - \gamma^n(t), {\gamma^n}'(s)}{\gamma^n}'(s),
    \end{align*}
     together with formula \eqref{eq:repradius}  and the convergence results established above.
\end{proof}

With all of the considerations above, we are able to show the existence of minimizers. 

\begin{theorem}\label{thm:compn}
    Let $p>2$ and suppose that the infimum defined in \eqref{eq:BBgamma} is finite. Then there exists a minimizer $\gamma \in C^{1, 1 - \frac{2}{p}}([0,1];\Omega)$ and $(J_t, V_t, \rho_t, \mu_t, f_t, G^0_t, G^1_t) \in {\rm CE}(\Gamma)$ of the problem \eqref{eq:BBgamma}.
\end{theorem}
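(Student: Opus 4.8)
The plan is to run the direct method of the calculus of variations, combining the $C^{1}$-compactness for the curves coming from Proposition~\ref{EnergySpaces} together with Theorem~\ref{CmpctEmbedding}, the compactness for the time-dependent measures from Proposition~\ref{thm:compactVarPath}, and the joint lower semicontinuity provided by Propositions~\ref{prop:LowSemiContvar} and~\ref{LowSemiCont}. First I would fix a minimizing sequence $(\gamma^n,J^n_t,V^n_t,\rho^n_t,\mu^n_t,f^n_t,(G^0_t)^n,(G^1_t)^n)$ with $\gamma^n\in C^1([0,1];\Omega)$ regular and non-self-intersecting and $(J^n_t,\dots)\in{\rm CE}(\Gamma^n)$; since \eqref{eq:BBgamma} is finite we may assume the total objective is bounded by some $M<\infty$ along the sequence, and since each summand of $\calA_{\Gamma^n}$ is nonnegative this already gives $cR(\gamma^n)\le M$. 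Because the objective is invariant under orientation-preserving $C^1$-reparametrizations of $[0,1]$ — $\calA_{\Gamma^n}$ depends only on the image $\Gamma^n$, and each term of $R$ is either geometric (Tangent-Point energy, length) or a function of the endpoints only — I would replace $\gamma^n$ by its constant-speed reparametrization, so that $|(\gamma^n)'|\equiv L_n$, the length of $\gamma^n$. Splitting $R$ into its two nonnegative terms and the (possibly signed) $-\log|\gamma^n(0)-\gamma^n(1)|$ and using $|\gamma^n(0)-\gamma^n(1)|\le\mathrm{diam}(\Omega)$, the bound $R(\gamma^n)\le M/c$ then yields uniform upper bounds for $E^p_{\rm tp}(\gamma^n)$ and for $L_n$, together with the separation $|\gamma^n(0)-\gamma^n(1)|\ge e^{-M/c}>0$, hence also $L_n\ge e^{-M/c}$; up to a subsequence $L_n\to L\in(0,\infty)$.

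The second step is the curve compactness. The arc-length reparametrization of $\gamma^n$ on $[0,L_n]$ has the same (uniformly bounded) Tangent-Point energy, so Proposition~\ref{EnergySpaces} gives $[(\gamma^n_{\rm arc})']^p_{W^{1-1/p,p}([0,L_n];\R^d)}\le C(p,M)$ uniformly in $n$. Rescaling back to the fixed interval $[0,1]$ via $\gamma^n(t)=\gamma^n_{\rm arc}(L_n t)$ and using that the $L_n$ lie in a fixed compact subinterval of $(0,\infty)$ converts this into a uniform bound for $\|\gamma^n\|_{W^{2-1/p,p}([0,1];\R^d)}$ (the $L^p$ part being controlled by $\Gamma^n\subset\Omega$ and $|(\gamma^n)'|=L_n$). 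Since $p>2$, the exponents $k=1$, $s=1-1/p$, $d=1$ satisfy $kp>1$ and $sp=p-1>1$, so Theorem~\ref{CmpctEmbedding} applies and, up to a subsequence, $\gamma^n\to\gamma$ in $C^1([0,1];\Omega)$ with $\gamma\in C^{1,1-2/p}([0,1];\R^d)$, the latter because the $C^{0,1-2/p}$-seminorm of $\gamma'$ is lower semicontinuous under uniform convergence of derivatives. The limit $\gamma$ is regular, since $|(\gamma^n)'|=L_n\to L>0$ forces $|\gamma'|\equiv L>0$, and it is non-self-intersecting: by Proposition~\ref{LowSemiCont} we have $E^p_{\rm tp}(\gamma)\le\liminf_n E^p_{\rm tp}(\gamma^n)<\infty$, so by \cite[Theorem~1.4]{STRZELECKI_2012} (applied to the arc-length reparametrization of $\gamma$) its image is an embedded one-dimensional submanifold with boundary, and together with $|\gamma(0)-\gamma(1)|\ge e^{-M/c}>0$ this means $\gamma$ parametrizes a $C^1$-manifold with boundary without self-intersections, so ${\rm CE}(\Gamma)$ is well defined for $\Gamma=\gamma([0,1])$.

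Third, the measure compactness and passage to the limit. Since $\gamma^n\to\gamma$ uniformly and the actions $\int_0^1\calA_{\Gamma^n}(J^n_t,\dots)\,dt$ are uniformly bounded, Proposition~\ref{thm:compactVarPath} furnishes, along a further subsequence, a tuple $(J_t,V_t,\rho_t,\mu_t,f_t,G^0_t,G^1_t)\in\mathcal{D}_{\rm adm}(\Gamma)$ with the convergences $i)$–$vi)$ of that proposition. Passing to the limit in the weak formulations \eqref{eq:cont1}–\eqref{eq:cont2}, exactly as in Proposition~\ref{thm:compactVarPath} and Theorem~\ref{thm:compact} but now also using the uniform convergence $\gamma^n\to\gamma$ in the push-forward terms, shows $(J_t,\dots)\in{\rm CE}(\Gamma)$; here one additionally checks that the limiting initial and final data are the prescribed ones, $\mu_i=\eta_i|_\Gamma$ and $\rho_i=\eta_i-\bar\mu_i$ for $i=0,1$, using $\mu^n_i=\eta_i|_{\Gamma^n}$, the identity $\rho^n_i+\bar\mu^n_i=\eta_i$ and the convergence $\bar\mu^n_i\rightharpoonup\bar\mu_i$. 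Finally, Proposition~\ref{prop:LowSemiContvar} gives $\liminf_n\int_0^1\calA_{\Gamma^n}(J^n_t,\dots)\,dt\ge\int_0^1\calA_\Gamma(J_t,\dots)\,dt$ and Proposition~\ref{LowSemiCont} gives $\liminf_n R(\gamma^n)\ge R(\gamma)$; adding these and recalling that the $\liminf$ of the sum equals the infimum in \eqref{eq:BBgamma} shows that $(\gamma,J_t,V_t,\rho_t,\mu_t,f_t,G^0_t,G^1_t)$ attains it, which is the claim.

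I expect the main obstacle to be the curve-compactness step of the second paragraph: converting the Tangent-Point energy bound into genuine $C^1$-compactness requires the reparametrization bookkeeping (reduction to constant speed, controlling $L_n$ from below through the $-\log$ term, and transporting the $W^{2-1/p,p}$-estimate of Proposition~\ref{EnergySpaces} from $[0,L_n]$ to the fixed interval $[0,1]$) and, most importantly, one must ensure the limiting curve stays regular and non-self-intersecting so that the constraint set ${\rm CE}(\Gamma)$ does not degenerate; a secondary delicate point is the identification of the limiting boundary data with $\eta_i|_\Gamma$.
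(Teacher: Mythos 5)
Your proposal is correct and follows essentially the same route as the paper's proof: reduce to (constant-speed/arc-length) parametrizations, use the uniform bound on $R$ to control $E^p_{\rm tp}$ and the lengths, transport the $W^{2-1/p,p}$-estimate of Proposition~\ref{EnergySpaces} to $[0,1]$, invoke the compact embedding of Theorem~\ref{CmpctEmbedding} for $C^1$-convergence of the curves, then combine Proposition~\ref{thm:compactVarPath} with the lower semicontinuity results of Propositions~\ref{prop:LowSemiContvar} and~\ref{LowSemiCont}. The minor extras you add (the explicit lower bound $|\gamma^n(0)-\gamma^n(1)|\ge e^{-M/c}$ guaranteeing $L>0$, and the identification of the limiting boundary data) are consistent with, and if anything slightly more careful than, the paper's argument.
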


\begin{proof}   
    Let $\gamma^n \in C^{1}([0,1];\Omega)$ and $(J_t^n, V_t^n, \rho_t^n, \mu^n, f_t^n, (G_t^0)^n, (G_t^1)^n)_{n\in\mathbb{N}} \in {\rm CE}(\Gamma^n)$ be a minimizing sequence of \eqref{eq:BBgamma} and denote by $L(n)$ the length of $\gamma^n$. Note that by Proposition \ref{EnergySpaces} the curve $\gamma^n$ is regular and non self-intersecting, thus allowing to define ${\rm CE}(\Gamma^n)$.
    Since $R$ is invariant under reparametrization, we consider the arclength parametrizations $\gamma^n: [0,L(n)]\to\Omega$ with uniformly bounded Tangent-Point energy, that is there exists a constant $M>0$ such that $\sup_n E_{\rm tp}^p(\gamma^n) \leq M$.
    Applying Proposition \ref{EnergySpaces} we obtain that \begin{align}\label{eq:estimate2}
        [(\gamma^n)']^p_{W^{1 - \frac{1}{p}, p}([0,L(n)];\Omega)} \leq C(p,M).
    \end{align} 
    Defining $\tilde \gamma^n : [0,1]\to\R^d$ as $\Tilde{\gamma}^n(t) := \gamma^n(L(n)\cdot t)$ for all $n\in\mathbb{N}$ gives
    \begin{align}\label{eq:secone}
        [ (\Tilde{\gamma}^n)']^p_{W^{1-\frac{1}{p},p}([0,1];\Omega)} &\leq L(n)^{2p-2}C(p,M).
    \end{align}
    Since $p>2$ by assumption, the previous estimate implies uniform boundedness of the Gagliardo seminorms. 
    Moreover, by compactness of $\Omega$, the $L^p([0,1];\Omega)$-norms are bounded and by
    \begin{align}
    \int_0^1 |(\tilde \gamma^n)'(t)|^p\, dt =  L(n)^p \int_0^1 | (\gamma^n)'(L(n)t)|^p\, dt = L(n)^p  
    \end{align}
    the sequence of $L(n)$ is as well, implying a uniform bound on the $W^{2-\frac{1}{p},p}([0,1];\Omega)$-norms. 
    Let $0<\alpha'<1-\frac{2}{p}$. From the compact embedding $W^{2-\frac{1}{p},p}([0,1];\Omega)$ into $C^{1,\alpha'}([0,1];\Omega)$, see Theorem \ref{CmpctEmbedding}, we deduce that $(\Tilde{\gamma}^n)_{n\in\mathbb{N}}$ admits a $C^{1,\alpha'}([0,1];\Omega)$-convergent subsequence, still denoted by $(\Tilde{\gamma}^n)_{n\in\mathbb{N}}$, with limit $\gamma \in C^{1,\alpha'}([0,1];\Omega)$.

    From Proposition \ref{thm:compactVarPath} we conclude the existence of another subsequence of measures, again denoted by $(J_t^n, V_t^n, \rho_t^n, \mu_t^n, f_t^n, ({G_t^0})^n, ({G_t^1})^n)_{n\in\mathbb{N}} \in {\rm CE}(\Gamma^n)$, converging to $(J_t, V_t, \rho_t, \mu_t, f_t, ({G_t^0}), ({G_t^1})) \in \mathcal{D}_{\rm adm}(\Gamma)$ according to the convergences stated in Proposition \ref{thm:compactVarPath}. 
    From the lower semicontinuity established in Proposition \ref{LowSemiCont} it follows that $R(\gamma)$ is finite for the limit curve and therefore, thanks to Proposition \ref{EnergySpaces}, $\gamma$ is non self-intersecting and regular with $\gamma\in C^{1,1-\frac{2}{p}}([0,1];\Omega)$ by the continuous embedding from Theorem \ref{CmpctEmbedding}.
    
    It remains to prove that $(J_t, V_t, \rho_t, \mu_t, f_t, ({G_t^0}), ({G_t^1}))$ satisfies the coupled continuity equations. To this end note that, since $\gamma$ is non self-intersecting and regular, the pushforward measures $\mathcal{V}^\gamma$, $f^\gamma$ and $({\mathcal{G}^i})^\gamma$ are well-defined and, by $\gamma^n \rightarrow \gamma$ uniformly, it holds that 
    \begin{align}\label{eq:ccdd}
        (\mathcal{V}^n)^{\gamma^n} \rightharpoonup (\mathcal{V})^{\gamma}, \ \ (f^n)^{\gamma^n} \rightharpoonup (f)^{\gamma}, \ \ ((\mathcal{G}^i)^n)^{\gamma^n} \rightharpoonup ({\mathcal{G}^i})^{\gamma}
    \end{align}
    narrowly. Moreover, inspecting the proof of Proposition \ref{thm:compactVarPath} we also have that $(\mu^n)^\gamma \rightharpoonup \mu^\gamma$. Finally note that for every test function $\psi$ it holds that 
    \begin{align*}
        & \left|\int_0^1\int_0^1 \partial_s \psi(t,s) |(\gamma^n)'(s)|^{-1} \, d\mathcal{V}_t^{\gamma^n} dt - \int_0^1\int_0^1 \partial_s \psi(t,s) |\gamma'(s)|^{-1} \, d\mathcal{V}_t^{\gamma} dt \right| \\
        & \leq \left|\int_0^1\int_0^1 \partial_s \psi(t,s) |\gamma'(s)|^{-1} \, d\mathcal{V}_t^{\gamma^n} dt - \int_0^1\int_0^1 \partial_s \psi(t,s) |\gamma'(s)|^{-1} \, d\mathcal{V}_t^{\gamma} dt \right| \\
        & \qquad  + \left|\int_0^1\int_0^1 \partial_s \psi(t,s) |(\gamma^n)'(s)|^{-1} \, d\mathcal{V}_t^{\gamma^n} dt - \int_0^1\int_0^1 \partial_s \psi(t,s) |\gamma'(s)|^{-1} \, d\mathcal{V}_t^{\gamma^n} dt \right|\\
        & \leq \left|\int_0^1\int_0^1 \partial_s \psi(t,s) |\gamma'(s)|^{-1} \, d\mathcal{V}_t^{\gamma^n} dt - \int_0^1\int_0^1 \partial_s \psi(t,s) |\gamma'(s)|^{-1} \, d\mathcal{V}_t^{\gamma} dt \right| \\ 
        & \qquad + \|\psi\|_{C^1([0,1]^2)} \|\mathcal{V}^{\gamma^n}_t\|_{M([0,1]^2)} \sup_s ||(\gamma^n)'(s)|^{-1} - |\gamma'(s)|^{-1}| \rightarrow 0
    \end{align*}
    due to the narrow convergence $(\mathcal{V}^n)^{\gamma^n} \rightharpoonup (\mathcal{V})^{\gamma}$ and the $C^1$ convergence of the curves. The last inequality follows since  $||(\gamma^n)'(s)|^{-1} - |\gamma'(s)|^{-1}| \leq C|(\gamma^n)'(s) - \gamma'(s)|$ for a positive constant $C$ independent on $n$.

    This, together with the convergences in \eqref{eq:ccdd} and the ones obtained in Proposition \ref{thm:compactVarPath}, allows to conclude that $(J_t, V_t, \rho_t, \mu_t, f_t, ({G_t^0}), ({G_t^1})) \in {\rm CE}(\Gamma)$ as we wanted to prove. 
\end{proof}

\subsection{The case of linear mobilities and beyond}\label{LinMobilities}

Despite the focus on general concave mobility functions with bounded domain, it is possible to extend the results of this paper to linear mobilities, giving action functionals 
\begin{align}
    \Psi_{\rm linear}(z,w) = \left\{
    \begin{array}{cc}
         \frac{|w|^2}{z}& z \neq 0  \\
         0 & w = 0 \text{ and } z = 0\\
         +\infty &  \text{ otherwise } 
    \end{array}
    \right.
\end{align}
and thus recovering traditional Optimal Transport dynamics both on $\Omega$ and $\Gamma$. Since the adaptation is straightforward and many arguments follow from the estimates provided in \cite{monsaingeon2021new}, we decide to discuss it only briefly in this section.
First, we remark that an analogue of Proposition \ref{prop:holder} follows by similar estimates and adapting \cite[Proposition 3.5]{monsaingeon2021new}. Such estimates lead to suitable compactness (adapting Theorem \ref{thm:compact}) and thus the existence of minimizers for a fixed preferential path.
Similar considerations hold for the existence of minimizers in the case of varying paths. 
Interestingly, the case of linear mobilities allows to prove finiteness of the infimum using Wasserstein and Fischer-Rao geodesics as in \cite[Lemma 3.7]{monsaingeon2021new}. Instead, as it is customary for general mobilities \cite{Lisini2010}, this is an additional assumption in our results, both for fixed and varying preferential paths.

We are also confident that similar approaches following \cite{Dolbeault_2008, Carrillo2010} would allow to extend the results of this paper to the case of non-decreasing mobilities with unbounded domain recovering, for instance, action functionals for the following form:
\begin{align}
    \Psi_{\rm concave}(z,w) = \left\{
    \begin{array}{cc}
         \frac{|w|^2}{z^\alpha}& z \neq 0  \\
         0 & w = 0 \text{ and } z = 0\\
         +\infty &  \text{ otherwise } 
    \end{array}
    \right.
\end{align}
for $\alpha \in (0,1)$. Here one can rely on the compactness estimates and lower semicontinuity results of \cite{Dolbeault_2008} with the only verification needed being at the level of the compactness for varying preferential paths.

\section{Numerics}\label{Numerics}
In this section we present a primal-dual optimization algorithm solving the coupled Optimal Transport problem introduced and analyzed in the previous sections, restricted to linear mobilities as discussed in Subsection \ref{LinMobilities} assuming $\calG_t^i=0$. We will first apply the method to the case of a fixed curve. Our method is based on the classical ALG2 algorithm introduced in \cite{augmented_1983}. However, in order to apply this method, we first need to reformulate the minimization problem into a saddle-point problem. In order to include the case of varying paths as well, we then embed the algorithm into a projected gradient descend step with respect to the discretized curve. We conclude this section by applying both algorithms to several examples, highlighting the influence of the cost parameters and the geometry of the problem to the shape of geodesics. Most of the details can be found in Appendix \ref{AppNumerics} as they follow from classical derivations, except the discretization which we discuss in more detail.

As a first step, we consider the case of a fixed preferential path. 
For the numerical investigation, we introduce an Augmented Lagragangian formulation of the minimization problem. The derivation can be found in Appendix \ref{derivAugmLagr} and follows standard arguments as presented for example in \cite{fu_high_2023}. The resulting reformulation of our problem reads 
\begin{align}\label{eq:OptAugmL}
    \sup\limits_{(\rho_t, J_t, \mu_t, \mathcal{V}_t, f_t)} \inf\limits_{\genfrac{}{}{0pt}{}{(\phi_t, \phi_t^{1d}),}{(\rho_t^*, J_t^*, \mu_t^*, \mathcal{V}_t^*, f_t^*)}} &L^{\alpha_1, \alpha_2}(\rho_t, J_t, \mu_t, \mathcal{V}_t, f_t, \phi_t, \phi_t^{1d}, \rho_t^*, J_t^*, \mu_t^*, \mathcal{V}_t^*, f_t^*)
\end{align}
for the Augmented Lagrangian functional 
\begin{align*}
    &L^{\alpha_1, \alpha_2}(\rho_t, J_t, \mu_t, \mathcal{V}_t, f_t, \phi_t, \phi_t^{1d}, \rho_t^*, J_t^*, \mu_t^*, \mathcal{V}_t^*, f_t^*)\\
    &:= \int_0^1\left(\calA^{\alpha_1, \alpha_2}\right)^*(\rho_t^*, J_t^*, \mu_t^*, \mathcal{V}_t^*, f_t^*)  \, dt + \calG(\phi_t, \phi_t^{1d}) \\
    &+ \skp{(\partial_t\phi_t - \rho_t^*,\nabla\phi_t - J_t^*),(\rho_t, J_t)}_\Omega \\
    &+ \skp{(\partial_t\phi_t^{1d} - \mu_t^*, \nabla\phi_t^{1d} - \mathcal{V}_t^*, \phi_t^{1d}-\phi_t\circ\gamma - f_t^*),(\mu_t, \mathcal{V}_t, f_t)}_\Gamma \\
    &+ \frac{r_1}{2} \skp{(\partial_t\phi_t - \rho_t^*,\nabla\phi_t - J_t^*),(\partial_t\phi_t - \rho_t^*,\nabla\phi_t - J_t^*)}_\Omega \\
    &+ \frac{r_2}{2} \skp{(\partial_t\phi_t^{1d} - \mu_t^*, \nabla\phi_t^{1d} - \mathcal{V}_t^*, \phi_t^{1d}-\phi_t\circ\gamma - f_t^*),(\partial_t\phi_t^{1d} - \mu_t^*, \nabla\phi_t^{1d} - \mathcal{V}_t^*, \phi_t^{1d}-\phi_t\circ\gamma - f_t^*)}_\Gamma
\end{align*}
with parameters $r_1, r_2>0$, allowing the application of ALG2. However, we need to adapt this method to our setting.
To this extend, we will follow the structure of the method introduced in \cite{benamou2000computational}, modified to account for the coupling along the preferential path. 

The main idea of the algorithm is to decouple the optimization of $(\phi_t, \phi_t^{1d}), (\rho_t^*, J_t^*, \mu_t^*, \mathcal{V}_t^*, f_t^*)$ and $(\rho_t, J_t, \mu_t, V_t, f_t)$. 
Each iteration follows three steps:
\begin{enumerate}
    \item Update $(\phi_t, \phi_t^{1d})$.
    \item Update $(\rho_t^*, J_t^*, \mu_t^*, \mathcal{V}_t^*, f_t^*)$.
    \item Update $(\rho_t, J_t, \mu_t, \mathcal{V}_t, f_t)$.
\end{enumerate}
These updates follow a standard structure, which details can be found in Appendix \ref{DetailsAlg}. The resulting algorithm is summarized in Algorithm \ref{alg:AugmLagr} where we omit the explicit time-dependence of the variables and specify the current iteration instead.

One iteration of these three steps will be called one step of the Augmented Lagrangian method, ALG for short. 
It can be extended to include minimization over the preferential path by embedding the ALG method into a projected gradient descent loop for the objective functional from \eqref{eq:BBgamma}, using central differences to approximate the gradient. 
Let $\Gamma_k$ be the image of the preferential path in iteration $k\in\mathbb{N}_0$ and let $p_k$ be a normalized descent direction. Then, $\Gamma_k$ is updated as
\begin{align*}
    \Tilde{\Gamma}_{k+1} &= \Gamma_k + \varepsilon_k p_k, \\
    \Gamma_{k+1} &= \Pi_\Omega \Tilde{\Gamma}_{k+1},
\end{align*}
where $\Pi_\Omega: \R^d \to \Omega$ is the orthogonal projection onto $\Omega$ and the stepsize $\varepsilon_k>0$ is allowed to vary.

\begin{algorithm}[t]
    \caption{Augmented Lagrangian Method}\label{alg:AugmLagr}
    \begin{algorithmic}
        \WHILE{$m\leq it_{\max}^{ALG}$ and $err_{ALG}>tol$}
            \STATE Solve $A_h (\phi_{m+1}, \phi^{1d}_{m+1}) = F^m_h$ for $\phi_{m+1}$ and $\phi^{1d}_{m+1}$
            \STATE $\eta_{\rho^*} \leftarrow \partial_t\phi_{m+1} + \frac{1}{r_1}\rho_m$ and $\eta_{J^*} \leftarrow \nabla\phi_{m+1} + \frac{1}{r_1}J_m$\\ $\eta_{\mu^*} \leftarrow \partial_t\phi^{1d}_{m+1} + \frac{1}{r_2}\mu_m$ and $\eta_{\mathcal{V}^*} \leftarrow \nabla\phi^{1d}_{m+1} + \frac{1}{r_2}\mathcal{V}_m$ and $\eta_{f^*} \leftarrow \phi^{1d}_{m+1} - \phi_{m+1}\circ\gamma + \frac{1}{r_2}f_m$
            \IF{$\eta_{\rho^*} + \frac{\eta_{J^*}^2}{2}\leq0$}
                \STATE $\rho^*_{m+1}\leftarrow \eta_{\rho^*}$ and $J^*_{m+1}\leftarrow \eta_{J^*}$
            \ELSE
                \STATE Solve $0 = x_\Omega^3 + 2 (1+\eta_{\rho^*}) x_\Omega - 2 | \eta_{J^*}|$
                \STATE $\rho^*_{m+1}\leftarrow -\frac{1}{2}x_\Omega^2$ and $J^*_{m+1}\leftarrow x_\Omega\frac{\eta_{J^*}}{| \eta_{J^*} |}$
            \ENDIF
            \IF{$\eta_{\mu^*} + \frac{1}{2}\left( \frac{\eta_{\mathcal{V}^*}^2}{\alpha_1} + \frac{\eta_{f^*}^2}{\alpha_2} \right)\leq0$}
                \STATE $\mu^*_{m+1}\leftarrow \eta_{\mu^*}$ and $V^*_{m+1}\leftarrow \eta_{\mathcal{V}^*}$ and $f^*_{m+1}\leftarrow \eta_{f^*}$
            \ELSE
                \STATE Solve $\begin{cases}
                0 &= x_\Gamma^3 + \frac{\alpha_1}{\alpha_2} x_\Gamma y_\Gamma^2 + 2 \left( \alpha_1^2 + \alpha_1\eta_{\mu^*}\right) x_\Gamma - 2 \alpha_1^2 |\eta_{V^*}|\\
                0 &= y_\Gamma^3 + \frac{\alpha_2}{\alpha_1} x_\Gamma^2 y_\Gamma + 2 \left( \alpha_2^2 + \alpha_2\eta_{\mu^*}\right) y_\Gamma - 2 \alpha_2^2 \eta_{f^*}
            \end{cases}$
                \STATE $\mu^*_{m+1}\leftarrow -\frac{1}{2}\left(\frac{x_\Gamma^2}{\alpha_1} + \frac{y_\gamma^2}{\alpha_2} \right)$ and $\mathcal{V}^*_{m+1}\leftarrow  x_\Gamma \frac{\eta_{\mathcal{V}^*}}{|\eta_{\mathcal{V}^*}|}$ and $f^*_{m+1}\leftarrow y_\Gamma$
            \ENDIF
            \STATE $\rho_{m+1} \leftarrow \rho_{m} + r_1 \left( \partial_t\phi_{m+1} - \rho^*_{m+1} \right)$
            \STATE $J_{m+1} \leftarrow J_{m} + r_1 \left( \nabla\phi_{m+1} - J^*_{m+1} \right)$
            \STATE $\mu_{m+1} \leftarrow \mu_{m} + r_2 \left( \partial_t\phi^{1d}_{m+1} - \mu^*_{m+1}\right)$
            \STATE $\mathcal{V}_{m+1} \leftarrow \mathcal{V}_{m} + r_2 \left( \nabla\phi^{1d}_{m+1} - \mathcal{V}^*_{m+1} \right)$
            \STATE $f_{m+1} \leftarrow f_{m} + r_2 \left( \phi^{1d}_{m+1} - \phi_{m+1}\circ\gamma - f^*_{m+1} \right)$
            \STATE $err_\Omega \leftarrow \max\left(\|\partial_t\phi_{m+1} - \rho^*_{m+1}\|_\infty, \|\nabla\phi_{m+1} - J^*_{m+1}\|_\infty\right)$ 
            \STATE $ err_\Gamma \leftarrow \max\left(\|\partial_t\phi^{1d}_{m+1} - \mu^*_{m+1}\|_\infty, \|\nabla\phi^{1d}_{m+1} - \mathcal{V}^*_{m+1} \|_\infty, \| phi^{1d}_{m+1} - \phi_{m+1}\circ\gamma - f^*_{m+1} \|_\infty \right)$
            \STATE $err_{ALG} \leftarrow err_\Omega + err_\Gamma$
            \STATE $m \leftarrow m+1$
        \ENDWHILE
    \end{algorithmic}
\end{algorithm}

\subsection{Discretization}\label{Discret}

Throughout all examples, we fix $\Omega=[0,1]^2$ and initial and final data to be compactly supported inside $\Omega$. Note that the first assumption is not restrictive. For initial or final data supported compactly but outside of $\Omega$, extending the domain $\Omega$ and rescaling the system recovers the assumption.

The image of the curve $\Gamma\subset\Omega$ is approximated using a piecewise linear curve. Let $\gamma_i\in\Gamma\subset\Omega$ for $i=0,\ldots,n_\gamma$ be a family of points inscribed in $\Gamma$ and let $L_i := [\gamma_{i-1},\gamma_i]$ for $i=1,\ldots,n_\gamma$ be the straight line between $\gamma_{i-1}$ and $\gamma_i$, see \autoref{fig:polgyon} for an illustration. We describe such an approximation by a vector $\gamma_h\in \R^{2n\gamma+2}$ where each entry corresponds to the coordinate of one of the points $\gamma_i$ and the corresponding image by $\Gamma_h$. 

As discretization of the optimal transport problem, we employ the finite element method introduced in \cite{fu_high_2023}, adapted to coupled equations.
Let $\calT = \{T_i\}_{i=1}^N$ denote a triangulation of $\Omega$ such that the polygonal curve is formed by a set of edges and extend this triangulation by prism elements of the form $I\times T$ for $T_i\in\calT$ and $I\in \calI$ to the unit cube $[0,1]^3\subset\R^3$. Here, $\calI := \{[t_{i-1}, t_i] \  \ t_i = \frac{i}{n_t}, \, i=0,\ldots,n_t\}$ is the partition of the unit interval $[0,1]\subset\R$ into $n_t$ intervals of the same length $\frac{1}{n_t}$.

For $k\geq 0$ and $I\in\calI$ let $\calP^k(I)$ be the set of polynomials of order $k$ on $I$ and define $\calP^k(T)$ for $T\in\calT$ in the same way.

We will need the following finite element spaces:
\begin{align*}
    V_h &:= \{ v \in H^1(X_\Omega) \ | \ v_{|I\times T} \in \calP^0(I)\otimes\calP^1(T) \quad\forall I\in\calI, T\in\calT\}\\
    W_h &:= \{ w \in L^2(X_\Omega) \ | \ v_{|I\times T} \in \calP^0(I)\otimes\calP^0(T) \quad\forall I\in\calI, T\in\calT\}\\
    V_h^{1d} &:= \{ v \in H^1(X_\Gamma) \ | \ v_{|I\times T} \in \calP^0(I)\otimes\calP^1(T) \quad\forall I\in\calI, T\in\calT\}\\
    W_h^{1d} &:= \{ w \in L^2(X_\Gamma) \ | \ v_{|I\times T} \in \calP^0(I)\otimes\calP^0(T) \quad\forall I\in\calI, T\in\calT\}    
\end{align*}
with $V_h$ and $V_h^{1d}$ $H^1$-conforming on $\calI\otimes\calT$.

We are looking for $\rho_h, \rho^*_h\in W_h$, $J_h, J_h^*\in W_h\times W_h$, $\phi_h\in V_h$, $\mu_h, \mu_h^*, f_h, f_h^*\in W_h^{1d}$, $\mathcal{V}_h, \mathcal{V}^*_h \in W_h^{1d}\times W_h^{1d}$ and $\phi^{1d}_h\in V_h^{1d}$ solving \eqref{eq:OptAugmL} where the choice of spaces allows for pointwise updates. Denote by $A_h, F^m_h$ the discrete matrix and vector representation of the bilinear- and linearform from the update of $\phi_{m+1}$ and $\phi^{1d}_{m+1}$, see Appendix \ref{DetailsAlg}. 
Initial and final data are defined as in the beginning of Section \ref{OptPrefPath} and approximated by gridfunctions from the spaces $V_h$ and $V_h^{1d}$.

\begin{figure}
    \centering
    \begin{tikzpicture}[scale=.5]
        \path 
            (0,0) coordinate (A) 
            (2,1) coordinate (B)
            (3,3) coordinate (C)
            (1,4) coordinate (D)
            (-2,3) coordinate (E)
            (-1,1) coordinate (F);

        \node at (A) [below] {$\gamma_0$};
        \node at (B) [right] {$\gamma_1$};
        \node at (C) [right] {$\gamma_2$};
        \node at (D) [above] {$\gamma_3$};
        \node at (E) [left] {$\gamma_4$};
        \node at (F) [below] {$\gamma_5$};
        
        \draw plot [smooth, tension=1.2] coordinates {(A) (B) (C) (D) (E) (F)};
        
        \draw[blue] (A) -- (B) -- (C) -- (D) -- (E) -- (F);
        
        \foreach \p in {A, B, C, D, E, F}
            \fill[black] (\p) circle (2pt);
    \end{tikzpicture}
    \caption{Polygonal curve inscribed in $\Gamma$}
    \label{fig:polgyon}
\end{figure}
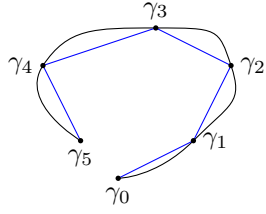

Now, consider the discretization of the penalty functional for varying curves. 
In order to apply cross-product formulas for the tangent point radius as in \cite{bartels_simple_2018} or \cite{yu_repulsive_2021}, we extend the parametrization of the curve into three dimensions by introducing a zero-component.
The integral functionals are approximated using the two-dimensional trapezoidal rule. The remaining penalty terms can be calculated explicitly for piecewise linear curves. We will use the discretization of the Tangent-Point energy from \cite{yu_repulsive_2021}.
The length of each interval is given by $l_i := | \gamma_i - \gamma_{i-1}|$ and the discrete tangent by $\tau_i := \frac{\gamma_i - \gamma_{i-1}}{l_i}$.
With this notation, the penalty functional reads
\begin{align*}
    R_h(\gamma_h) &= \sum\limits_{i=1}^{n_\gamma} \sum\limits_{L_j\cap L_i = \emptyset}  k_p(i,j, \tau_i)l_i l_j  - \log|\gamma_0 - \gamma_{n_\gamma} | + \sum\limits_{i=1}^{n_\gamma} l_i
\end{align*}
where 
\begin{align*}
    k_p(i,j,\tau_i) &:= \frac{1}{4} \left( \frac{| \tau_i \times (\gamma_{i-1} - \gamma_{j-1})|^p}{| \gamma_{i-1} - \gamma_{j-1} |^{2p}} + \frac{| \tau_i \times (\gamma_{i-1} - \gamma_{j})|^p}{| \gamma_{i-1} - \gamma_{j} |^{2p}} + \frac{| \tau_i \times (\gamma_{i} - \gamma_{j-1})|^p}{| \gamma_{i} - \gamma_{j-1} |^{2p}} + \frac{| \tau_i \times (\gamma_{i} - \gamma_{j})|^p}{| \gamma_{i} - \gamma_{j} |^{2p}} \right) 
\end{align*}
is the average of the integrand evaluated in the boundary points of $L_i\times L_j$. 
Using this approximation, we get rid of the singularity at intervals $L_i\cap L_j \neq \emptyset$.

Similarly, we approximate the action from \eqref{eq:BBgamma} using the two dimensional trapezoidal rule, resulting in the discrete functional $\calA_{\Gamma_h}(J_h, \mathcal{V}_h, \rho_h, \mu_h, f_h)$ where time-integration is discretized as well.

It remains to approximate the gradient of the objective functional $\calA_{\Gamma_h}(J_h, \mathcal{V}_h, \rho_h, \mu_h, f_h) + c R(\gamma_h)$ and calculate a descent direction close to this gradient. We define the central difference operators
\begin{align*}
    D_i \calA_{\Gamma_h}(J_h, \mathcal{V}_h, \rho_h, \mu_h, f_h) &:= \frac{\calA_{\Gamma_h + \varepsilon e_i}(J_h^+, \mathcal{V}_h^+, \rho_h^+, \mu_h^+, f_h^+) - \calA_{\Gamma_h- \varepsilon e_i}(J_h^-, \mathcal{V}_h^-, \rho_h^-, \mu_h^-, f_h^-)}{2\varepsilon}
    \intertext{and}
    D_i R(\gamma_h) &:= \frac{R(\gamma_h + \varepsilon e_i) - R(\gamma_h- \varepsilon e_i)}{2\varepsilon}
\end{align*}
for $e_i\in\R^{2n_\gamma+2}$ the $i$-th unit vector and a small pertubation $\varepsilon>0$. 
In order to calculate the resulting function values, the change in measures needs to be accounted for by calculating the values using one iteration of the augmented Lagrangian method for fixed curves introduced above. 
As a search direction choose
\begin{align*}
    p_k^i &:= \begin{cases}
        \frac{D_i \calA_{\Gamma_h}(J_h, \mathcal{V}_h, \rho_h, \mu_h, f_h) + c D_i R_h(\gamma_h)}{| D_i \calA_{\Gamma_h}(J_h, \mathcal{V}_h, \rho_h, \mu_h, f_h) + c D_i R_h(\gamma_h) |} & \text{if }   |D_i \calA_{\Gamma_h}(J_h, \mathcal{V}_h, \rho_h, \mu_h, f_h)| > c|D_i R_h(\gamma_h)| \\
        0 & \text{else}
    \end{cases}
\end{align*}
for $i\in\{1,\ldots, 2n_\gamma+2\}$, allowing to update if and only if the change in action dominates the change in the penalty term. 
Note that this choice allows for $p_k=0$ and if $p_k=0$ for $n_{iter}$ consecutive iterations, the scaling parameter $c>0$ is updated to $\frac{c}{2}$ if it remains above the threshold $c_{low}>0$. 

After each curve update, the domain $\Omega$ is remeshed. To allow this process to be consistent, we impose the additional assumption that there exists some fixed $\delta>0$ such that $dist(\Gamma, \partial\Omega)\geq\delta$. This distance will specify the bounds used in the projection step onto the closed interval $[\delta, 1-\delta]$ leading to
\begin{align*}
    \Pi_{[\delta,1-\delta]} x &= \min(1-\delta, \max(x,\delta)).
\end{align*}
Applying this projection to each component of the vector $\gamma_h$ defines the projection of the piecewise linear curve onto $[\delta,1-\delta]^2\subset\Omega$ and we denote this component wise projection by $\Pi_{[\delta,1-\delta]}$. 
The update steps described above are summarized in Algorithm \ref{alg:MinCurve}.

\begin{algorithm}[t]
    \caption{Minimization over the preferential path}\label{alg:MinCurve}
    \begin{algorithmic}
        \STATE Initialize mesh and initial, final data
        \WHILE{$k \leq it^\gamma_{\max}$ and $err> tol$}
        \IF{$p_j=0$ for all $j\in\{k-n_{iter},\ldots,k\}$ and $c>c_{low}$}
            \STATE $c \leftarrow \frac{c}{2}$
        \ENDIF
            \FOR{$i=1,\ldots, 2n_\gamma+2$}
                \STATE \autoref{alg:AugmLagr} with $\gamma=\gamma_{k} + \varepsilon e_i$
                \STATE \autoref{alg:AugmLagr} with $\gamma=\gamma_{k} - \varepsilon e_i$
                \STATE $g_{k}^i \leftarrow D_i \calA_{\Gamma_{k}} + c D_i R(\gamma_{k})$
                \IF{$|D_i \calA_{\Gamma_{k}} | > c|D_i R(\gamma_{k})|$ }
                    \STATE $p_{k}^i \leftarrow g_{k}^i$
                \ELSE
                    \STATE $p_{k}^i \leftarrow 0$
                \ENDIF
            \ENDFOR
            \STATE $p_{k} \leftarrow \frac{p_{k}}{| p_{k}|}$
            \STATE $\gamma_{k+1} \leftarrow \Pi_{[\delta,1-\delta]}(\gamma_{k}+\varepsilon_k p_{k})$
            \STATE Remesh with $\gamma=\gamma_{k+1}$
            \STATE \autoref{alg:AugmLagr} with $\gamma=\gamma_{k+1}$
            \STATE $err \leftarrow \| \gamma_{k+1} - \gamma_{k} \|_\infty + err_{AL}$
            \STATE $k \leftarrow k+1$
        \ENDWHILE
    \end{algorithmic}
\end{algorithm}

\subsection{Examples}

This section highlights how the coupling mechanisms and the geometry of the problem influence the transport dynamics. In the case of a fixed curve we will compare different cost parameters and the case of varying curves is studied in two different configurations as well.
We will apply the algorithms introduced in the previous subsections to several examples on $\Omega=[0,1]^2$ and initial and final data modelled using the two dimensional gaussian
\begin{equation}\label{eq:Gaussian}
    f_{m_x, m_y, \sigma}(x,y) := \exp\left(-\frac{(x-m_x)^2+(y-m_y)^2}{2\sigma^2}\right)
\end{equation}
with mean $m_x,m_y\in\R$ and variance $\sigma^2>0$.

\subsubsection{Fixed curve}

We test Algorithm \ref{alg:AugmLagr}, thus assuming the preferential path to be given by a fixed curve. All meshes are constructed using the meshsize $h=0.02$ in temporal and spatial direction.

\begin{example}\label{ucurve}
    Consider 
    \begin{align*}
        \rho_0 (x,y) := f_{0.5, 0.2, 0.1}(x,y) \quad \text{and} \quad \rho_1 (x,y) = \frac{1}{2}\left(f_{0.2, 0.8, 0.1}(x,y)  + f_{0.8, 0.8, 0.1}(x,y)\right). 
    \end{align*}
    The (fixed) preferential path is given by the piecewise linear interpolation between the points
    \begin{align*}
        (0.3,0.7), (0.4,0.3), (0.6,0.3), (0.7,0.7) \in [0,1]^2.
    \end{align*}
    Both initial and final data are restricted and scaled as described in the beginning of \autoref{OptPrefPath}, see \autoref{fig:ucurve_data} for an illustration of the resulting measures.

    We will compare two sets of parameters $\alpha_1$ and $\alpha_2$. 
    As a first choice, let $\alpha_1=0.01=\alpha_2$, implying that transport along the curve is cheaper than transport in the bulk region. We compare the results to the choices $\alpha_1=100=\alpha_2$, meaning that transport along the curve is more costly. 
    The resulting evolutions are represented in \autoref{fig:ucurve_smallalph} for $\alpha_1=0.01=\alpha_2$ and \autoref{fig:ucurve_largealph} for $\alpha_1=100=\alpha_2$.
    As expected, small parameter choices lead to increased transport along the curve. For the particular value $0.01$, mass is transported almost exclusively along $\Gamma$. Moreover, transport along the curve is faster compared to \autoref{fig:ucurve_largealph}. 
    For the second pair of parameters, most of the mass is transported in the bulk domain. Since initial and final data are supported on the curve, mass needs to be transported along $\Gamma$ as well.  
\end{example}

\begin{figure}
     \centering
     \begin{subfigure}[b]{0.45\textwidth}
         \centering
         \includegraphics[width=\textwidth]{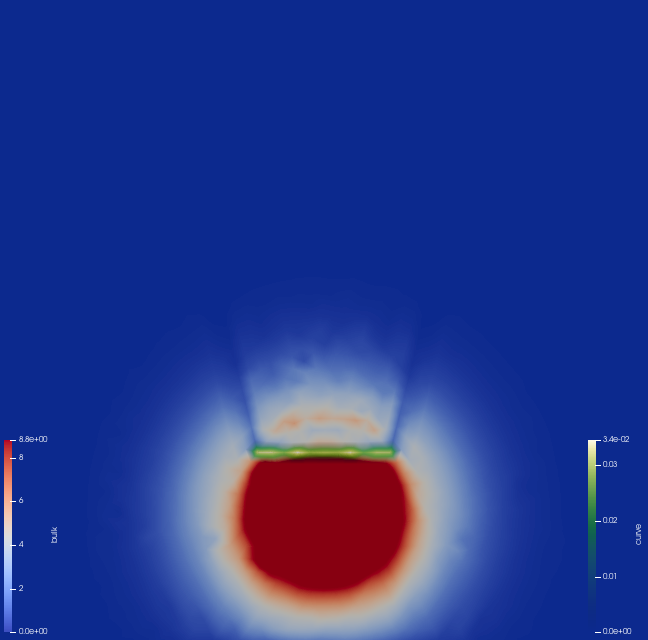}
         \caption{Initial data $\rho_0$}
         \label{fig:ucruve_rho0_fixed}
     \end{subfigure}
     \hfill
     \begin{subfigure}[b]{0.45\textwidth}
         \centering
         \includegraphics[width=\textwidth]{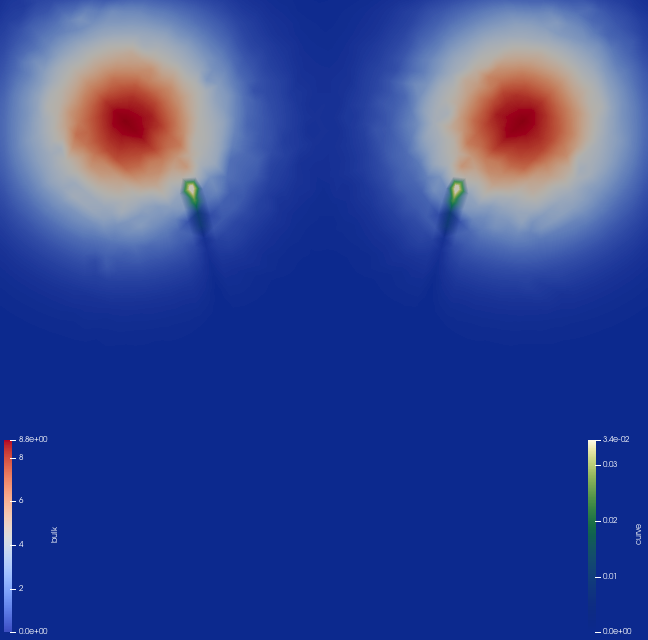}
         \caption{Final data $\rho_1$}
         \label{fig:ucurve_rho1_fixed}
     \end{subfigure}
     \caption{Initial and final data for a u-shaped curve}
     \label{fig:ucurve_data}
\end{figure}

\begin{figure}
     \centering
     \begin{subfigure}[t]{0.3\textwidth}
         \centering
         \includegraphics[height=4cm]{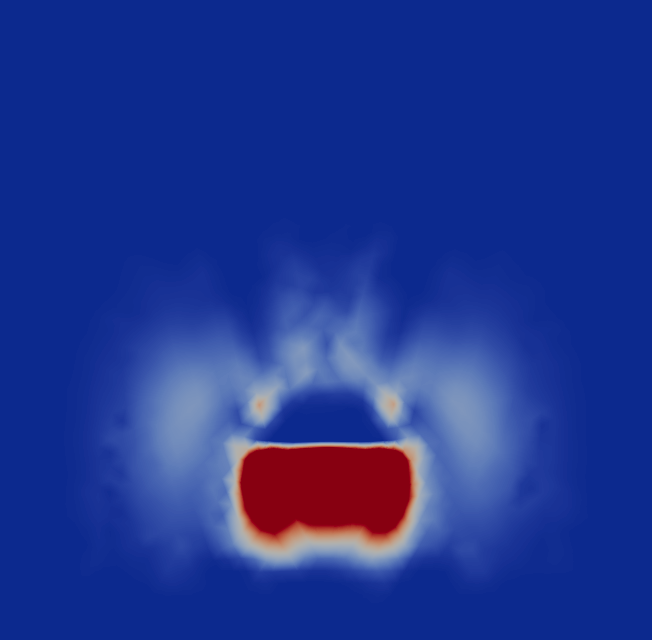}
         \caption{$\rho$ at $t=0.2$}
     \end{subfigure}
     \hfill
     \begin{subfigure}[t]{0.3\textwidth}
         \centering
         \includegraphics[height=4cm]{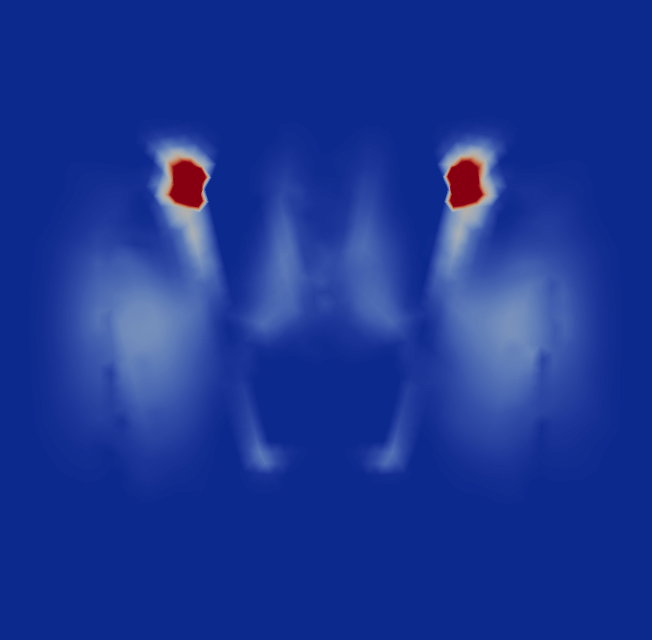}
         \caption{$\rho$ at $t=0.5$}
     \end{subfigure}
     \hfill
     \begin{subfigure}[t]{0.3\textwidth}
         \centering
         \includegraphics[height=4cm]{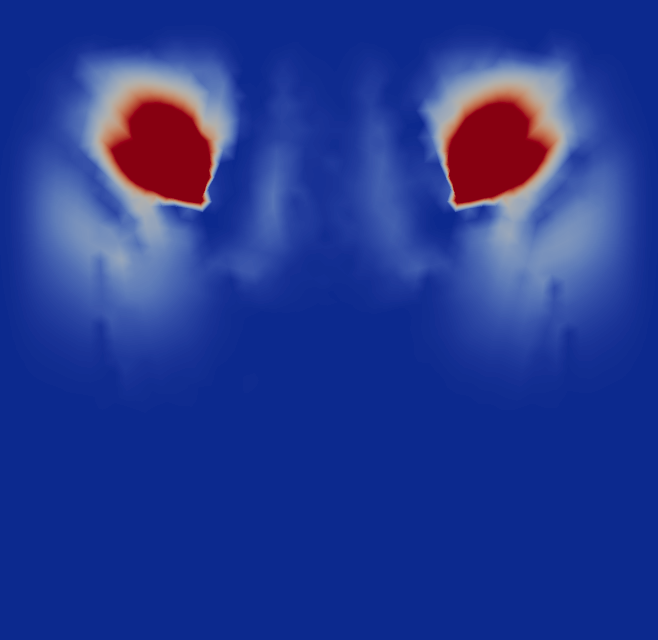}
         \caption{$\rho$ at $t=0.8$}
     \end{subfigure}
     \begin{subfigure}[t]{0.05\textwidth}
         \centering\includegraphics[height=4cm]{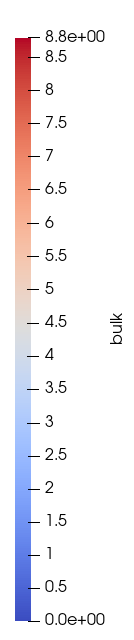}
     \end{subfigure}

     \centering
     \begin{subfigure}[t]{0.3\textwidth}
         \centering
         \includegraphics[height=4cm]{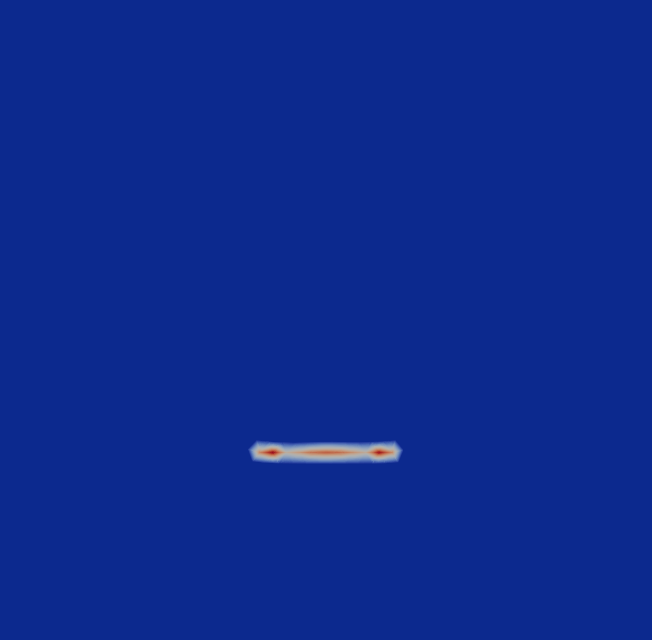}
         \caption{$\mu$ at $t=0.2$}
     \end{subfigure}
     \hfill
     \begin{subfigure}[t]{0.3\textwidth}
         \centering
         \includegraphics[height=4cm]{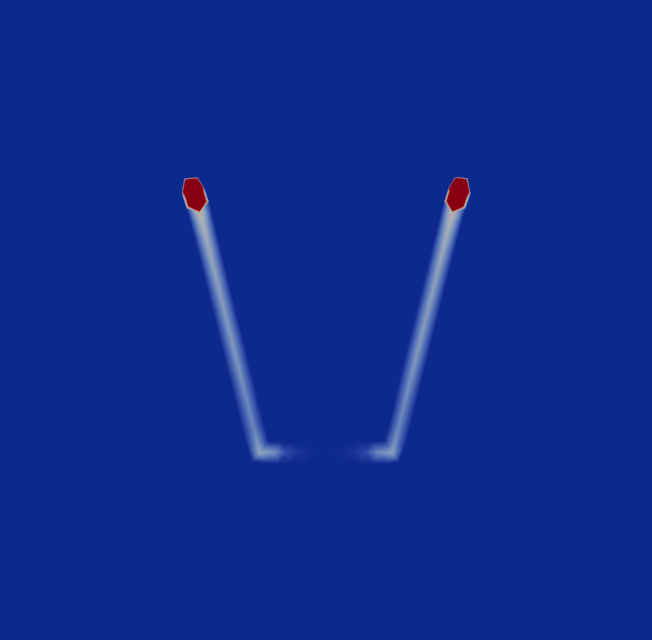}
         \caption{$\mu$ at $t=0.5$}
     \end{subfigure}
     \hfill
     \begin{subfigure}[t]{0.3\textwidth}
         \centering
         \includegraphics[height=4cm]{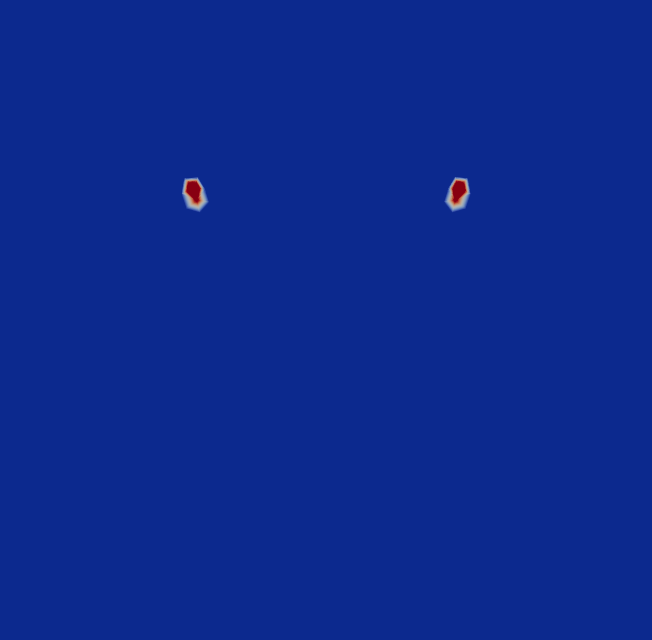}
         \caption{$\mu$ at $t=0.8$}
     \end{subfigure}
     \begin{subfigure}[t]{0.05\textwidth}
         \centering\includegraphics[height=4cm]{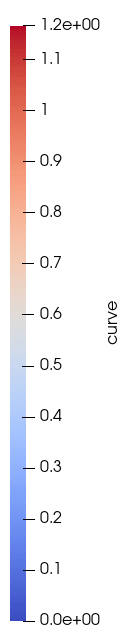}
     \end{subfigure}
     \caption{Time-evolution for $\alpha_1=0.01=\alpha_2$ and a u-shaped curve}
     \label{fig:ucurve_smallalph}
\end{figure}

\begin{figure}
     \centering
     \begin{subfigure}[t]{0.3\textwidth}
         \centering
         \includegraphics[height=4cm]{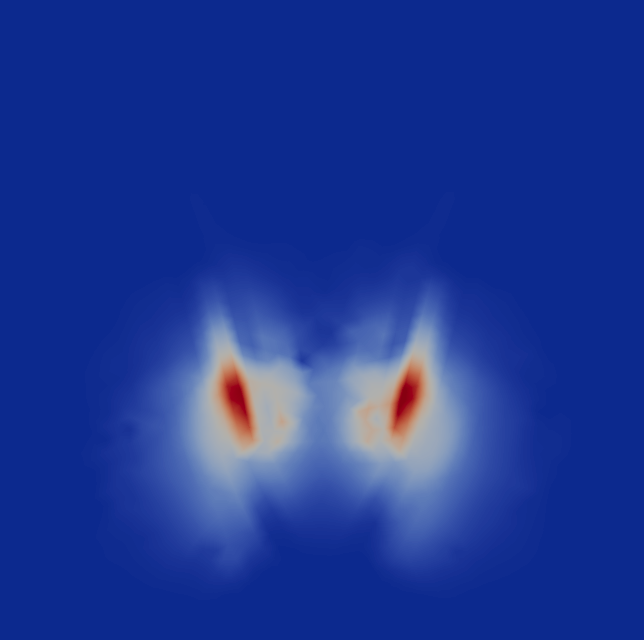}
         \caption{$\rho$ at $t=0.2$}
     \end{subfigure}
     \hfill
     \begin{subfigure}[t]{0.3\textwidth}
         \centering
         \includegraphics[height=4cm]{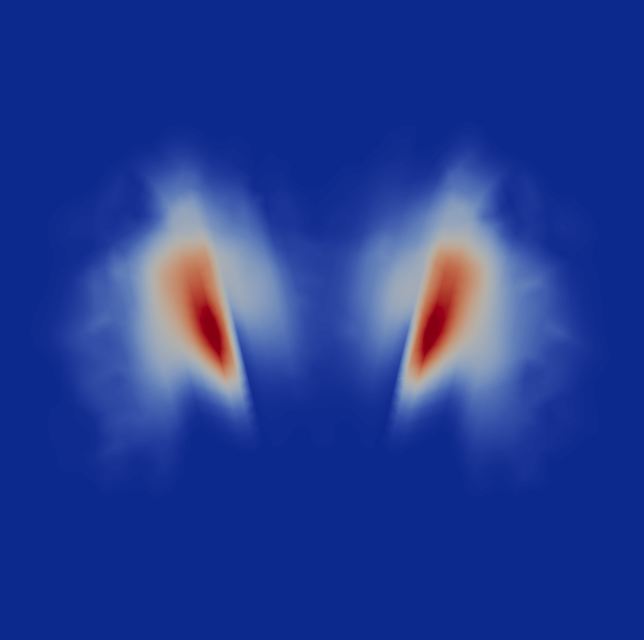}
         \caption{$\rho$ at $t=0.5$}
     \end{subfigure}
     \hfill
     \begin{subfigure}[t]{0.3\textwidth}
         \centering
         \includegraphics[height=4cm]{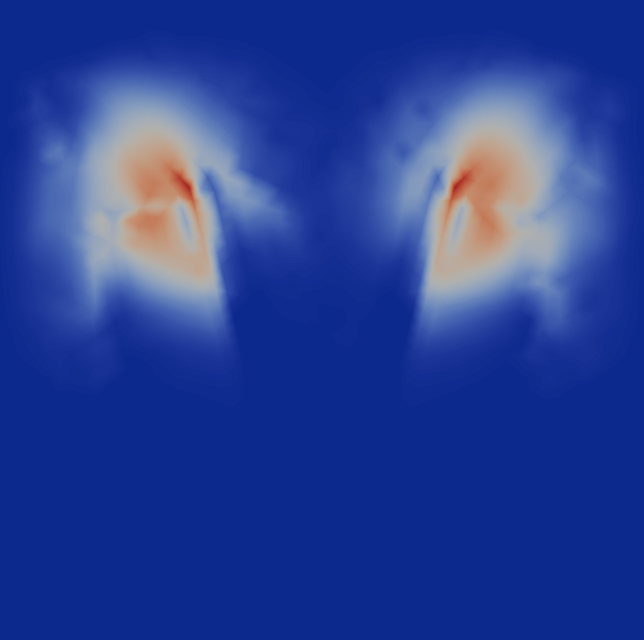}
         \caption{$\rho$ at $t=0.8$}
     \end{subfigure}
     \begin{subfigure}[t]{0.05\textwidth}
         \centering\includegraphics[height=4cm]{plots/ucurve/smallalph_bulk_cb.png}
     \end{subfigure}

     \centering
     \begin{subfigure}[t]{0.3\textwidth}
         \centering
         \includegraphics[height=4cm]{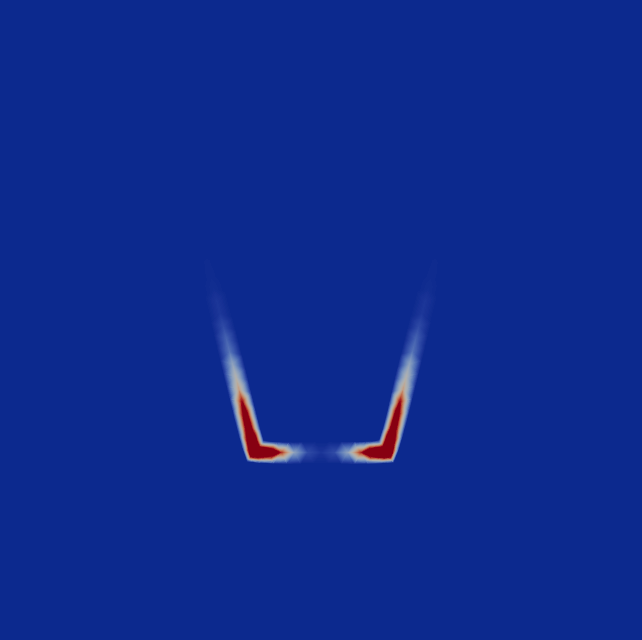}
         \caption{$\mu$ at $t=0.2$}
     \end{subfigure}
     \hfill
     \begin{subfigure}[t]{0.3\textwidth}
         \centering
         \includegraphics[height=4cm]{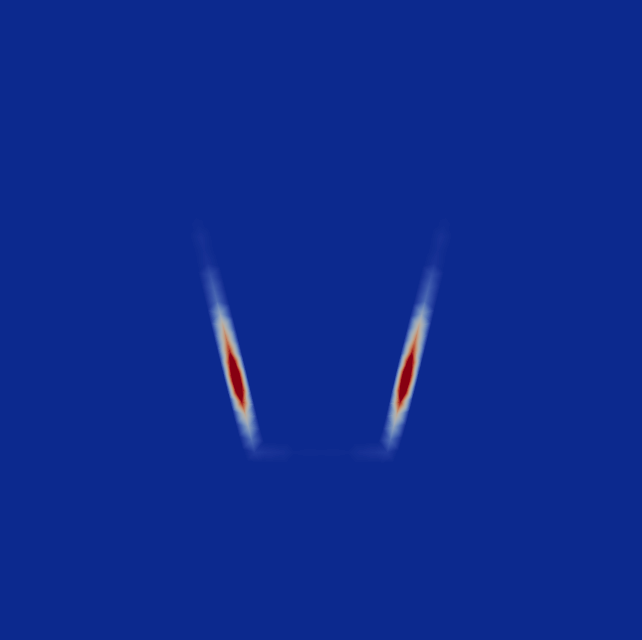}
         \caption{$\mu$ at $t=0.5$}
     \end{subfigure}
     \hfill
     \begin{subfigure}[t]{0.3\textwidth}
         \centering
         \includegraphics[height=4cm]{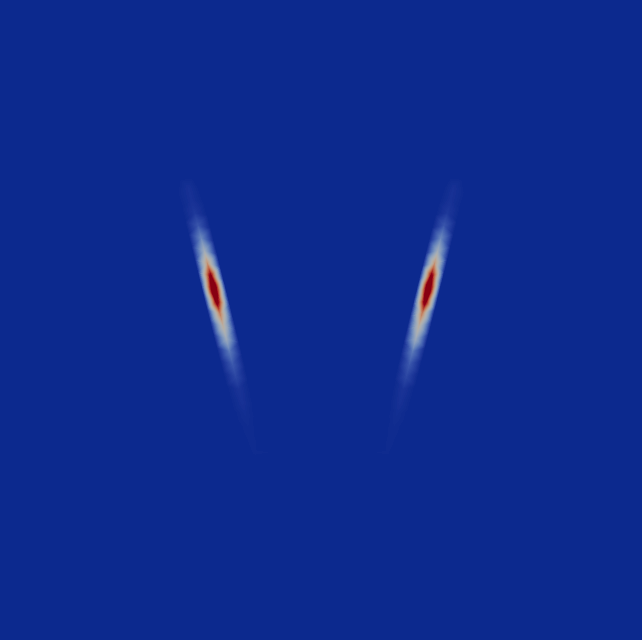}
         \caption{$\mu$ at $t=0.8$}
     \end{subfigure}
     \begin{subfigure}[t]{0.05\textwidth}
         \centering\includegraphics[height=4cm]{plots/ucurve/smallalph_curve_cb.png}
     \end{subfigure}
     \caption{Time-evolution for $\alpha_1=100=\alpha_2$ and a u-shaped curve}
     \label{fig:ucurve_largealph}
\end{figure}

\begin{example}\label{quadcurve}
    Let
    \begin{align*}
        \rho_0 (x,y) &:=  f_{0.2, 0.2, 0.1}(x,y)
        \intertext{and}
        \rho_1 (x,y) &:= f_{0.8, 0.5, 0.1}(x,y)  + f_{0.2, 0.8, 0.05}(x,y) + f_{0.4, 0.8, 0.05}(x,y) + f_{0.6, 0.8, 0.05}(x,y) 
    \end{align*}
    be given. 
    The (fixed) preferential path is chosen as the piecewise linear interpolation of the mapping $\gamma(t) = t^2+0.2$ between the points $t\in \{\frac{k}{10} \ | \ k =1,\ldots,8\}$.
    Again, we restrict and rescale the initial and final data illustrated in \autoref{fig:quadcurve_data} and compare the choices $\alpha_1=0.01=\alpha_2$ and $\alpha_1=100=\alpha_2$. 
    \autoref{fig:quadcurve_smallalph} and \autoref{fig:quadcurve_largealph} show the numerical results. 
    As expected, for the small parameter values we are able to observe mass transportation along the curve that is faster. In contrast to Example \ref{ucurve}, mass is transported through the bulk as well, especially between the initial data and the part of the final data supported above the curve, thus illustrating the influence of the particular geometry of the problem. 
    For the second set of parameters transport along the curve is slower. This behaviour is similar to the one observed for large parameter choices in the previous example.
\end{example}

\begin{figure}
     \centering
     \begin{subfigure}[b]{0.45\textwidth}
         \centering
         \includegraphics[width=\textwidth]{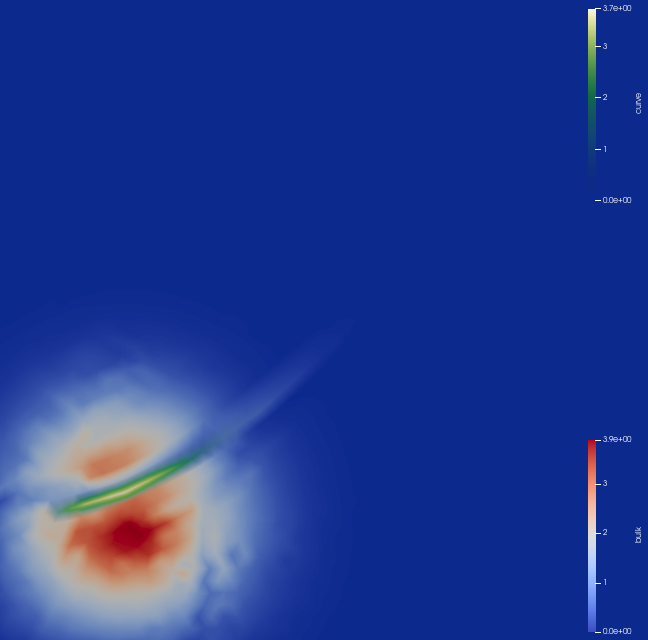}
         \caption{Initial data $\rho_0$}
         \label{fig:ucruve_rho0}
     \end{subfigure}
     \hfill
     \begin{subfigure}[b]{0.45\textwidth}
         \centering
         \includegraphics[width=\textwidth]{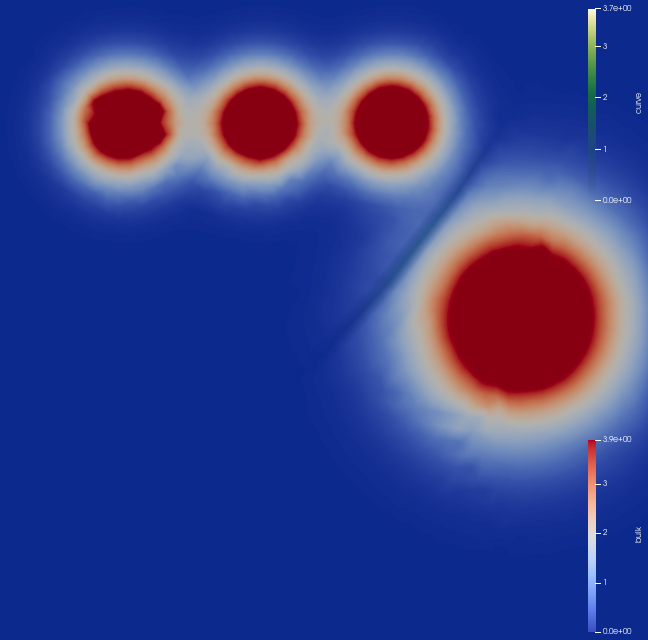}
         \caption{Final data $\rho_1$}
         \label{fig:ucurve_rho1}
     \end{subfigure}
     \caption{Initial and final data for the branch of a quadratic curve}
     \label{fig:quadcurve_data}
\end{figure}

\begin{figure}
     \centering
     \begin{subfigure}[t]{0.3\textwidth}
         \centering
         \includegraphics[height=4cm]{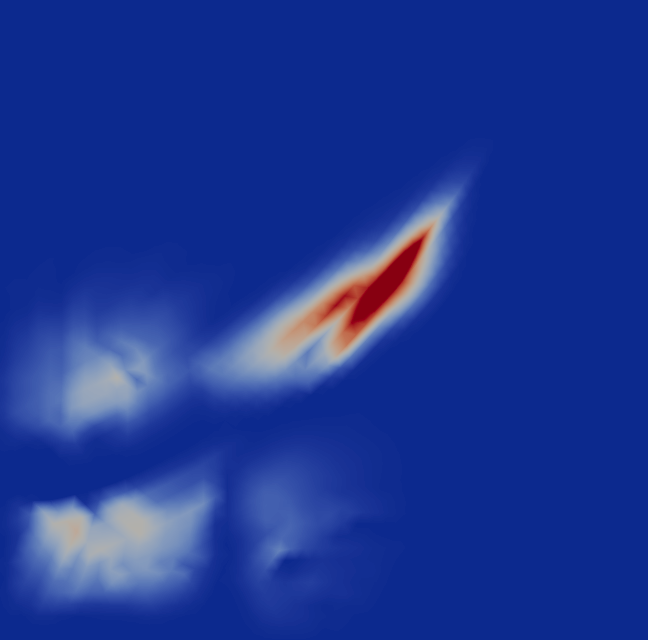}
         \caption{$\rho$ at $t=0.2$}
     \end{subfigure}
     \hfill
     \begin{subfigure}[t]{0.3\textwidth}
         \centering
         \includegraphics[height=4cm]{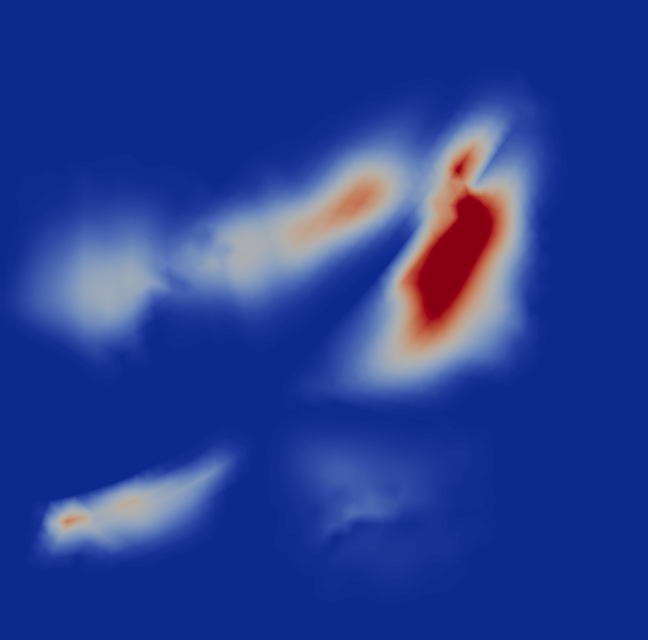}
         \caption{$\rho$ at $t=0.5$}
     \end{subfigure}
     \hfill
     \begin{subfigure}[t]{0.3\textwidth}
         \centering
         \includegraphics[height=4cm]{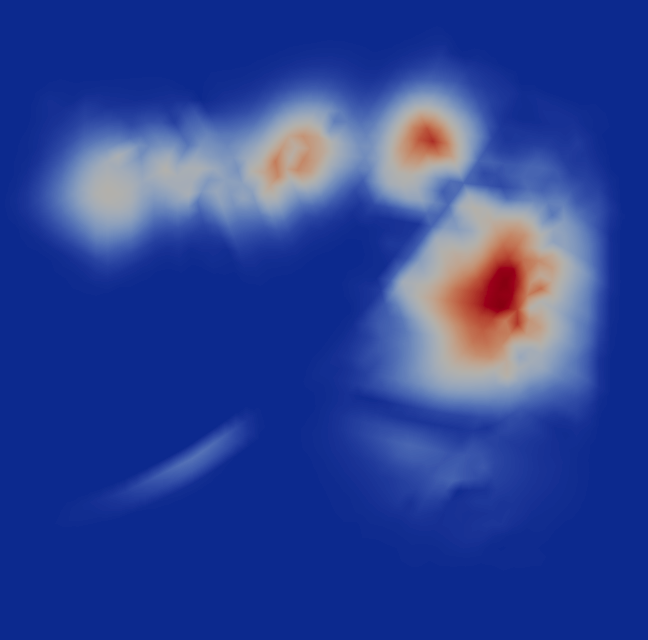}
         \caption{$\rho$ at $t=0.8$}
     \end{subfigure}
     \begin{subfigure}[t]{0.05\textwidth}
         \centering\includegraphics[height=4cm]{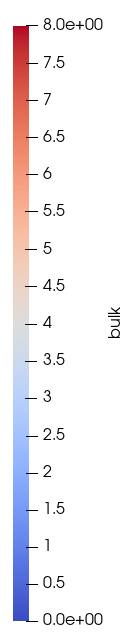}
     \end{subfigure}

     \centering
     \begin{subfigure}[t]{0.3\textwidth}
         \centering
         \includegraphics[height=4cm]{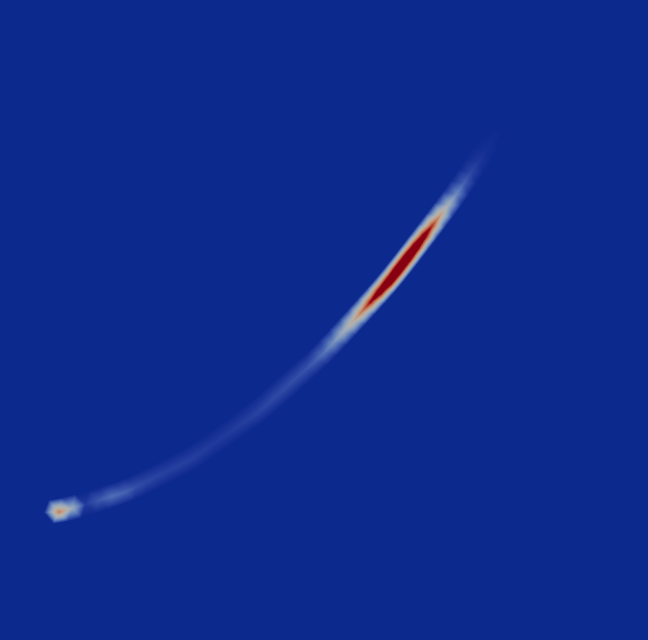}
         \caption{$\mu$ at $t=0.2$}
     \end{subfigure}
     \hfill
     \begin{subfigure}[t]{0.3\textwidth}
         \centering
         \includegraphics[height=4cm]{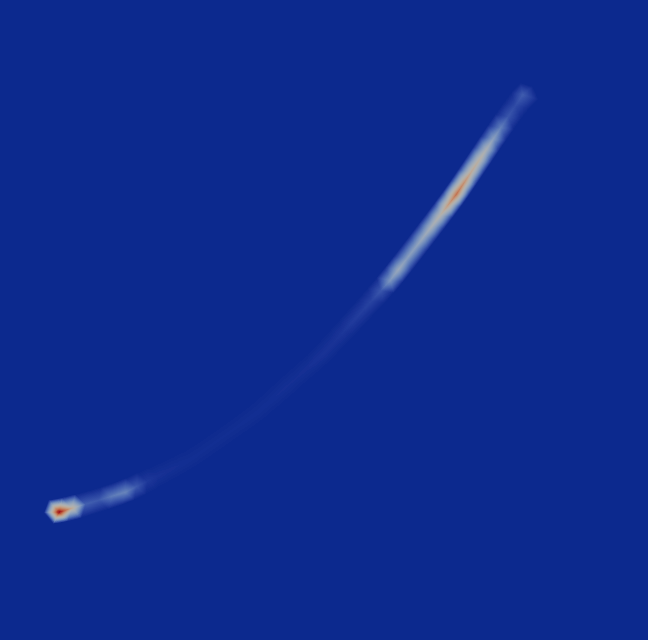}
         \caption{$\mu$ at $t=0.5$}
     \end{subfigure}
     \hfill
     \begin{subfigure}[t]{0.3\textwidth}
         \centering
         \includegraphics[height=4cm]{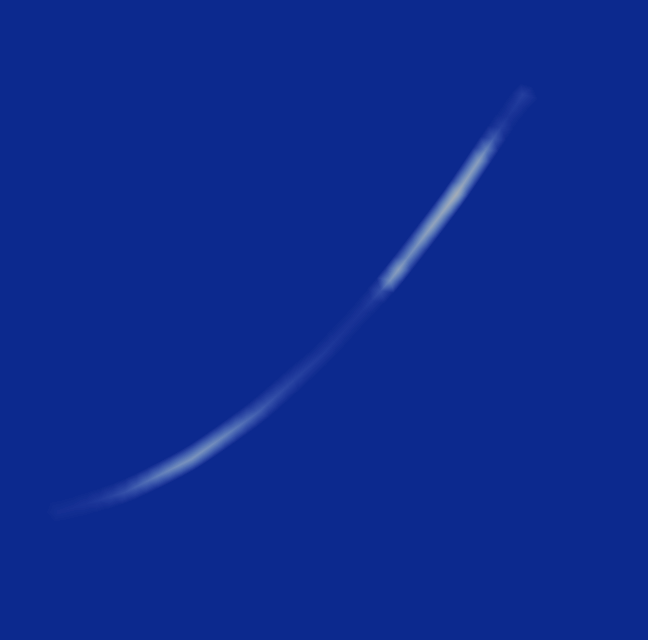}
         \caption{$\mu$ at $t=0.8$}
     \end{subfigure}
     \begin{subfigure}[t]{0.05\textwidth}
         \centering\includegraphics[height=4cm]{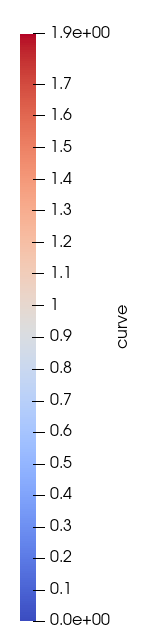}
     \end{subfigure}
     \caption{Time-evolution for $\alpha_1=0.01=\alpha_2$ and the branch of a quadratic curve}
     \label{fig:quadcurve_smallalph}
\end{figure}

\begin{figure}
     \centering
     \begin{subfigure}[t]{0.3\textwidth}
         \centering
         \includegraphics[height=4cm]{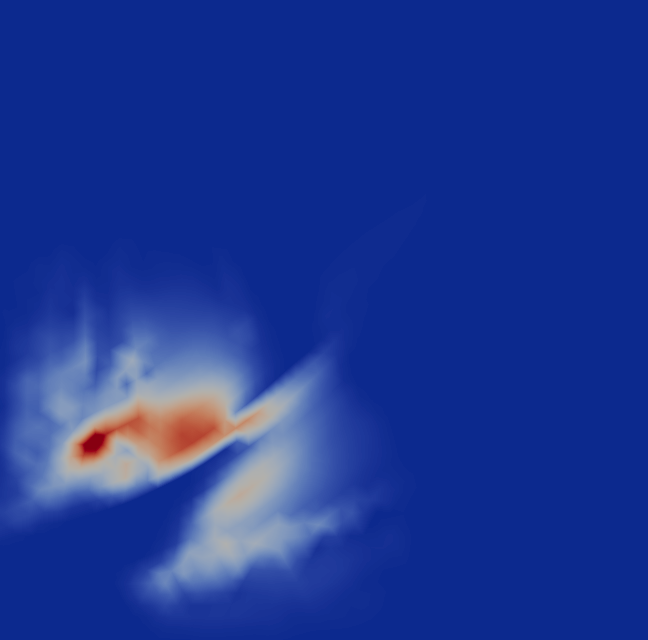}
         \caption{$\rho$ at $t=0.2$}
     \end{subfigure}
     \hfill
     \begin{subfigure}[t]{0.3\textwidth}
         \centering
         \includegraphics[height=4cm]{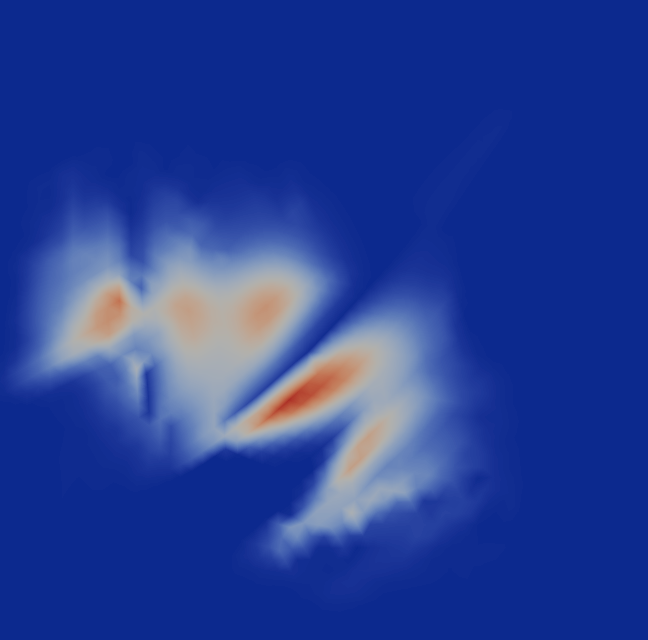}
         \caption{$\rho$ at $t=0.5$}
     \end{subfigure}
     \hfill
     \begin{subfigure}[t]{0.3\textwidth}
         \centering
         \includegraphics[height=4cm]{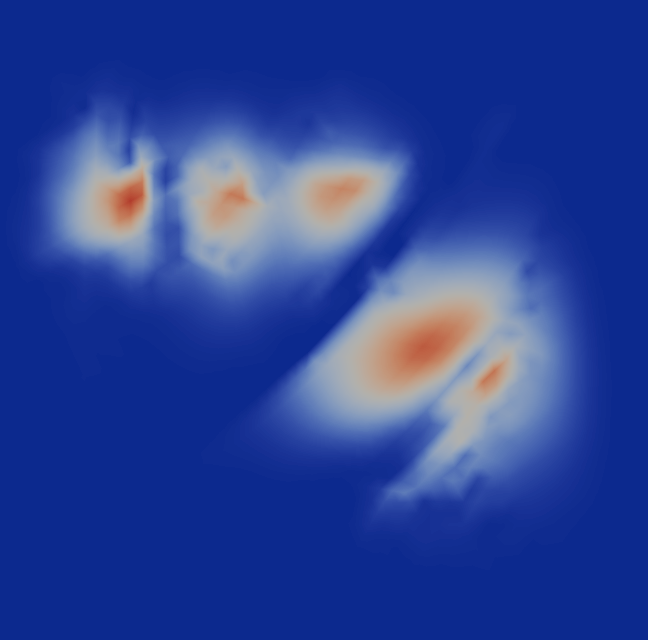}
         \caption{$\rho$ at $t=0.8$}
     \end{subfigure}
     \begin{subfigure}[t]{0.05\textwidth}
         \centering\includegraphics[height=4cm]{plots/quadraticcurve/bulk_cb.png}
     \end{subfigure}

     \centering
     \begin{subfigure}[t]{0.3\textwidth}
         \centering
         \includegraphics[height=4cm]{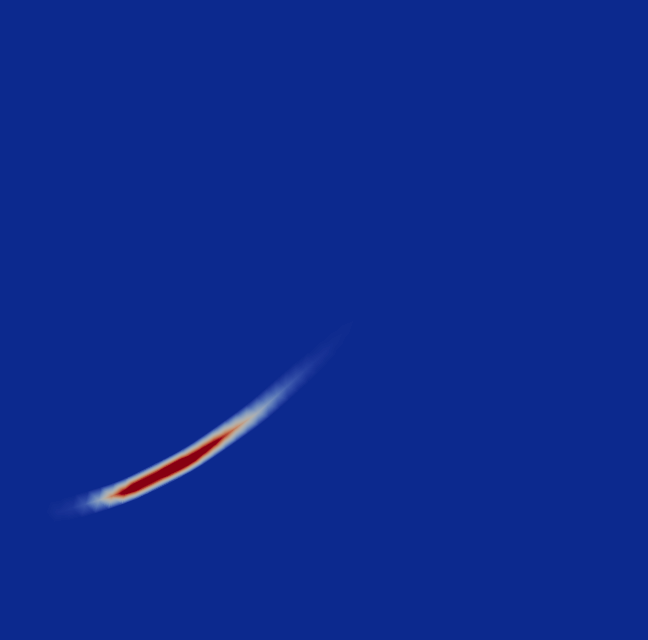}
         \caption{$\mu$ at $t=0.2$}
     \end{subfigure}
     \hfill
     \begin{subfigure}[t]{0.3\textwidth}
         \centering
         \includegraphics[height=4cm]{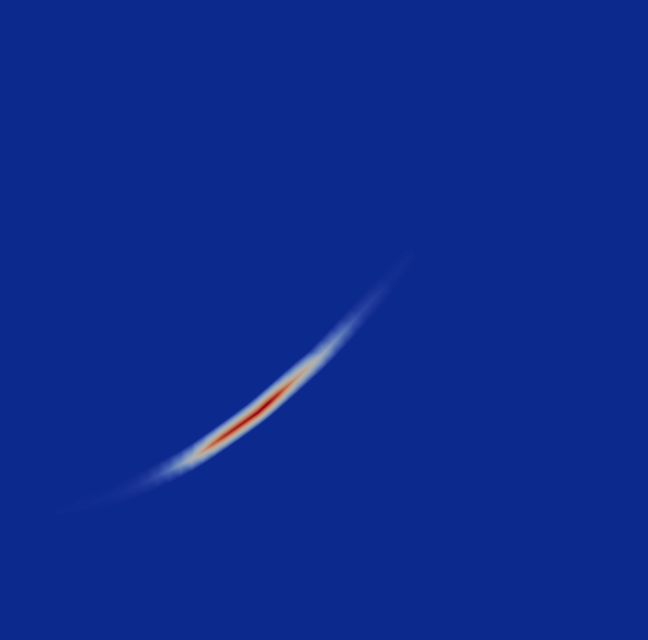}
         \caption{$\mu$ at $t=0.5$}
     \end{subfigure}
     \hfill
     \begin{subfigure}[t]{0.3\textwidth}
         \centering
         \includegraphics[height=4cm]{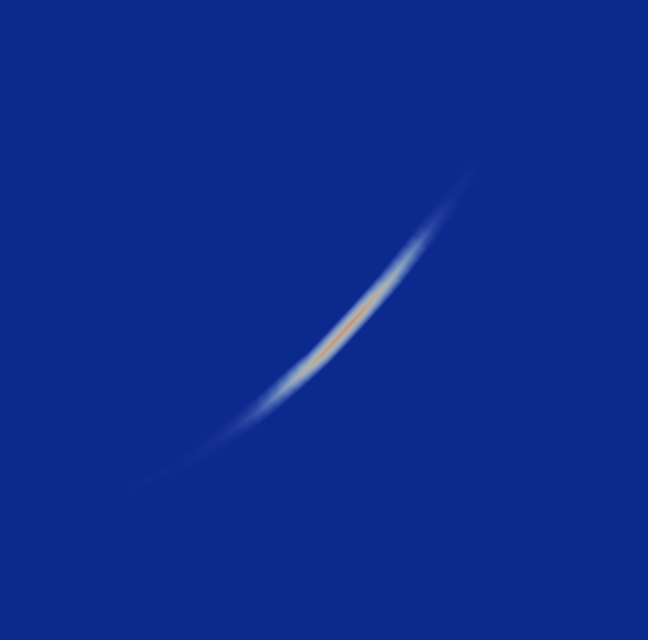}
         \caption{$\mu$ at $t=0.8$}
     \end{subfigure}
     \begin{subfigure}[t]{0.05\textwidth}
         \centering\includegraphics[height=4cm]{plots/quadraticcurve/curve_cb.png}
     \end{subfigure}
     \caption{Time-evolution for $\alpha_1=100=\alpha_2$ and the branch of a quadratic curve}
     \label{fig:quadcurve_largealph}
\end{figure}

\subsubsection{Varying preferential path}

In what follows, we test Algorithm \ref{alg:MinCurve} for several examples of varying preferential paths. All meshes have meshsize $h=0.1$ in spatial and $h=0.02$ in temporal direction. Compared to the fixed curve examples, we chose a coarser discretization in spatial direction because of increased memory usage.

\begin{example}\label{line}
   Let 
   \begin{align*}
       \rho_0(x,y) := f_{0.25,0.25,0.05}(x,y) \quad \text{and} \quad \rho_1(x,y) := f_{0.75,0.75,0.05}(x,y)
   \end{align*}
   be given. As an initial guess for the optimal curve $\gamma$ we start with $\gamma^0(t) \equiv \frac{1}{2}$ approximated using three points. When applying Algorithm \ref{alg:MinCurve}, we expect the curve to rotate in the direction of $\gamma(t) = t$ connecting $\rho_0$ and $\rho_1$.
   For the stepsize $0.01$ and scaling parameter $c=0.001$, \autoref{fig:varcurveline} displays every 20th iteration of the curve as well as the transport costs. The irregularities of the transport cost is explained by manual changes in the stepsize and scaling. This prevents the curve from jumping between two configurations of similar costs without improving costs. 
   As expected, we recover a straight line. The length penalty compresses the curve in the sense that it does not connect the centre of initial and final data. 
   In \autoref{fig:meas_varlincurve} the measures resulting after 20000 iterations of Algorithm \ref{alg:AugmLagr} with the last curve configuration fixed are displayed. At first mass belonging to the final data supported in the bulk is transported along the curve and through the bulk domain, in a second phase, the mass remaining on the curve is transported. Overall, transport along the curve is faster.
\end{example} 

\begin{figure}[h!]
    \centering
     \begin{subfigure}[b]{0.4\textwidth}
         \centering
         \includegraphics[width=\textwidth]{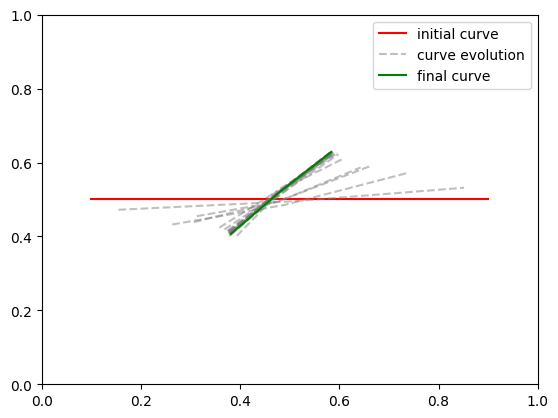}
         \caption{Evolution of the curve}
         \label{fig:varcurveline}
     \end{subfigure}
     \hfill
     \begin{subfigure}[b]{0.4\textwidth}
         \centering
         \includegraphics[width=\textwidth]{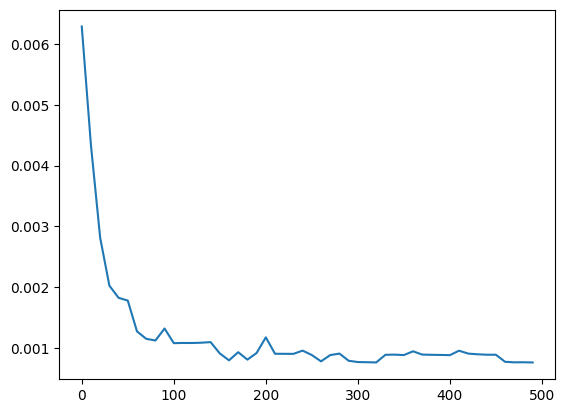}
         \caption{Evolution of the transport costs}
     \end{subfigure}
    \caption{Example \ref{line}}
    \label{fig:varcurve_line}
\end{figure}

\begin{figure}[h!]
     \centering
     \begin{subfigure}[t]{0.3\textwidth}
         \centering
         \includegraphics[height=4cm]{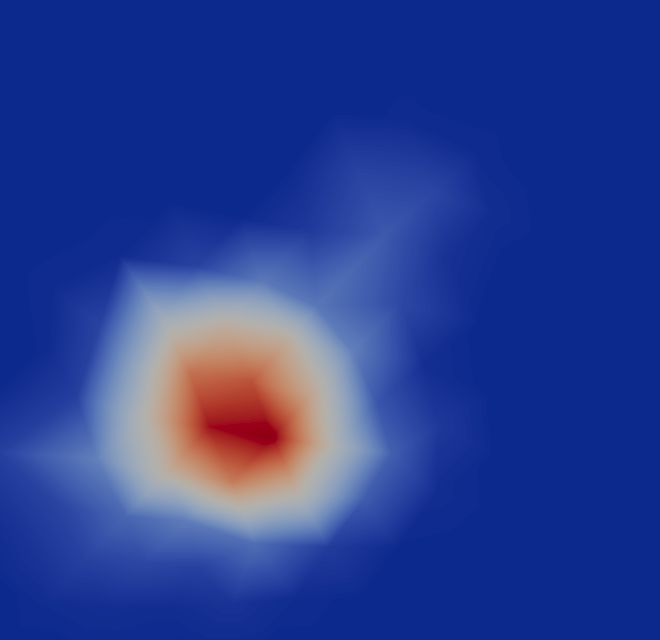}
         \caption{$\rho$ at $t=0.2$}
     \end{subfigure}
     \hfill
     \begin{subfigure}[t]{0.3\textwidth}
         \centering
         \includegraphics[height=4cm]{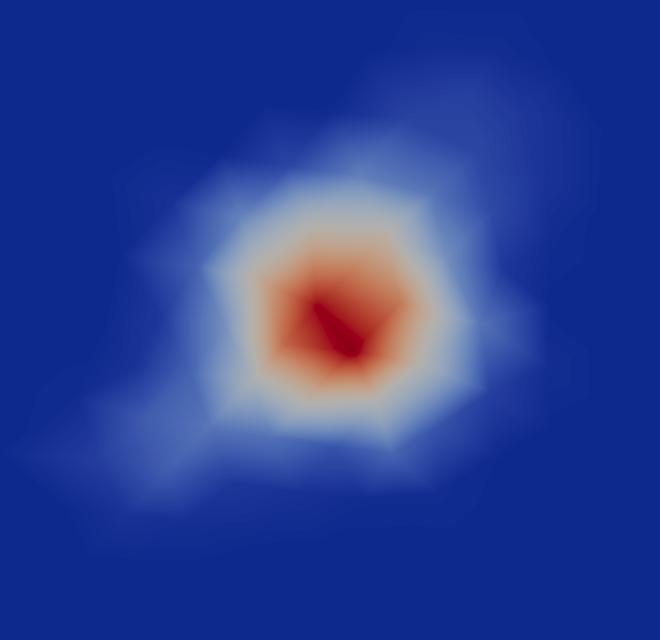}
         \caption{$\rho$ at $t=0.5$}
     \end{subfigure}
     \hfill
     \begin{subfigure}[t]{0.3\textwidth}
         \centering
         \includegraphics[height=4cm]{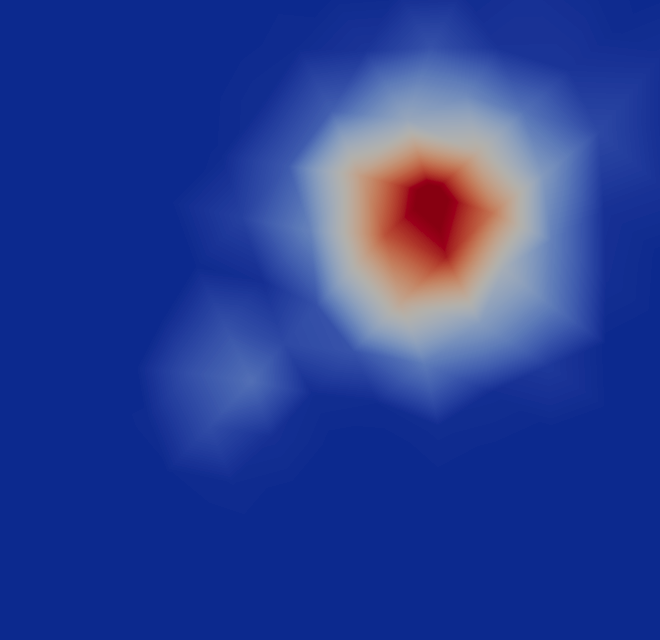}
         \caption{$\rho$ at $t=0.8$}
     \end{subfigure}
     \begin{subfigure}[t]{0.05\textwidth}
         \centering\includegraphics[height=4cm]{plots/quadraticcurve/bulk_cb.png}
     \end{subfigure}

     \centering
     \begin{subfigure}[t]{0.3\textwidth}
         \centering
         \includegraphics[height=4cm]{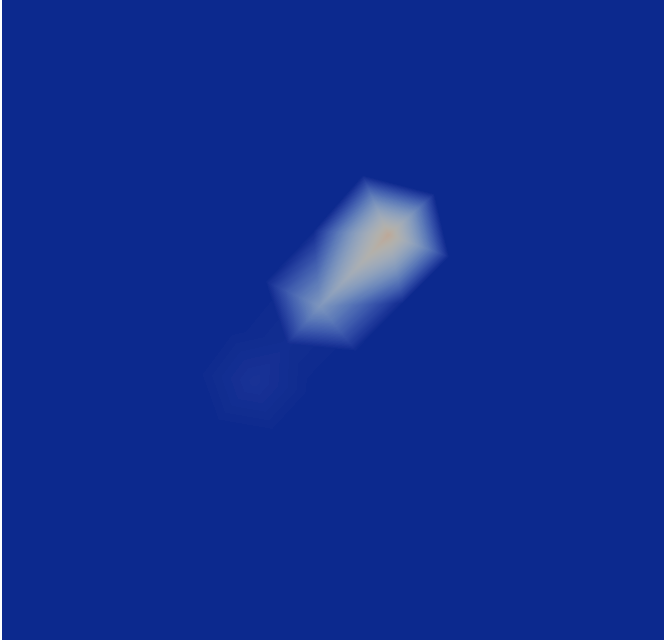}
         \caption{$\mu$ at $t=0.2$}
     \end{subfigure}
     \hfill
     \begin{subfigure}[t]{0.3\textwidth}
         \centering
         \includegraphics[height=4cm]{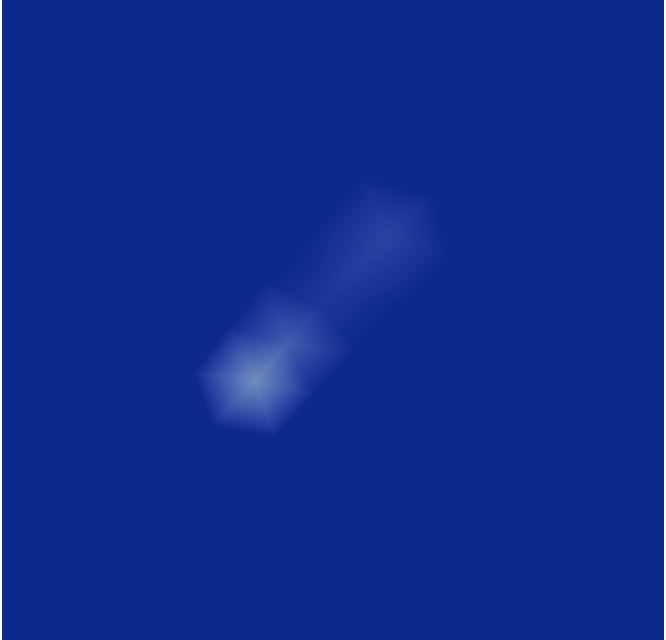}
         \caption{$\mu$ at $t=0.5$}
     \end{subfigure}
     \hfill
     \begin{subfigure}[t]{0.3\textwidth}
         \centering
         \includegraphics[height=4cm]{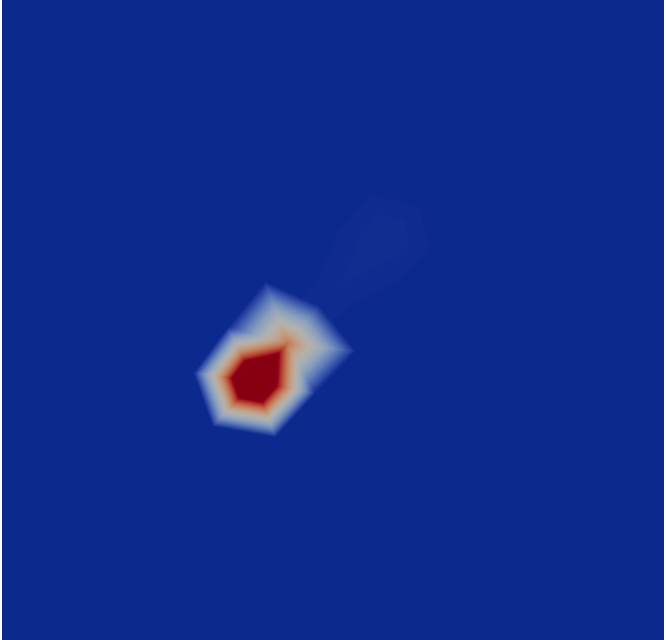}
         \caption{$\mu$ at $t=0.8$}
     \end{subfigure}
     \begin{subfigure}[t]{0.05\textwidth}
         \centering\includegraphics[height=4cm]{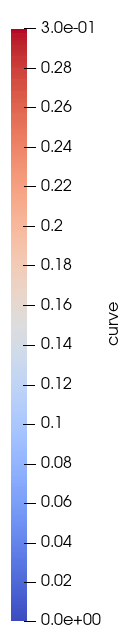}
     \end{subfigure}
     \caption{Time-evolution of measures for $\alpha_1=0.01=\alpha_2$ and the final curve from Example \ref{line}}
     \label{fig:meas_varlincurve}
\end{figure}

\begin{example}\label{varying_ucurve}
    As a second example, we consider initial and final data as in Example \ref{ucurve}. We expect to recover a curve similar to the one used for the fixed-curve example. Since such a curve differs from a straight line, we need to use small scaling parameters to allow for such geometries in the minimization. Again, let $\gamma^0(t) \equiv \frac{1}{2}$ be the initial curve discretized by three points. 
    In \autoref{fig:varcurve_u} every 20th step of the evolution using an initial stepsize of $0.01$ and a scaling parameter $c=0.001$ is shown. Indeed, the curve evolves towards a v-shaped structure. However, the length and boundary constraints prevent it from being symmetric. Again, we adapt stepsize and scaling parameter manually. 
    \autoref{fig:meas_varucurve} shows the measures arising as numerical solutions of the coupled system for the curve after the last minimization step and 20000 iterations. Similar to Example \ref{ucurve}, most of the mass is transported along the curve. However, there still is transport in the bulk region becacuse of the non-symmetric structure of the resulting curve.
\end{example}

\begin{figure}[h!]
    \centering
     \begin{subfigure}[b]{0.4\textwidth}
         \centering
         \includegraphics[width=\textwidth]{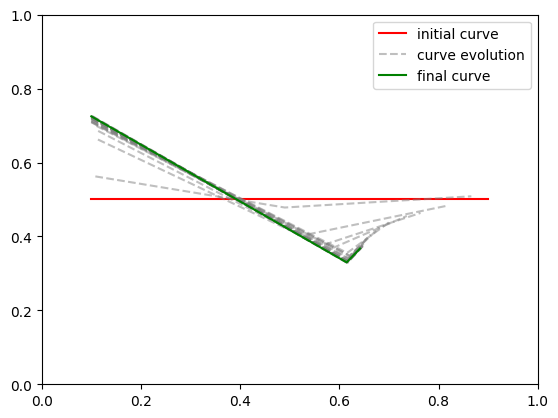}
         \caption{Evolution of the curve}
     \end{subfigure}
     \hfill
     \begin{subfigure}[b]{0.4\textwidth}
         \centering
         \includegraphics[width=\textwidth]{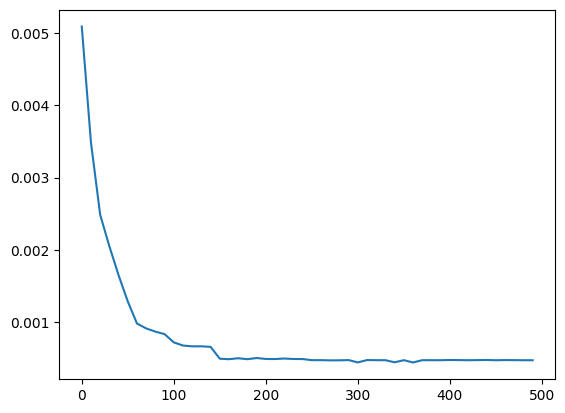}
         \caption{Evolution of the transport costs}
    \end{subfigure}
    \caption{Example \ref{varying_ucurve}}
    \label{fig:varcurve_u}
\end{figure}

\begin{figure}[h]
     \centering
     \begin{subfigure}[t]{0.3\textwidth}
         \centering
         \includegraphics[height=4cm]{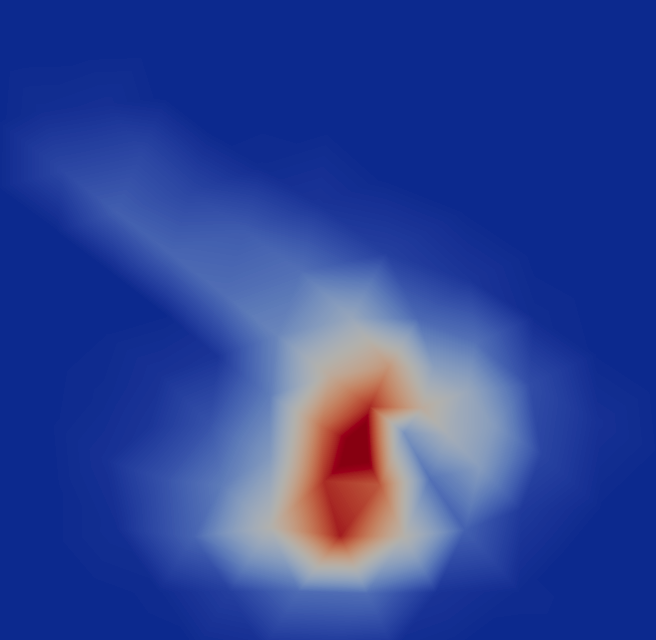}
         \caption{$\rho$ at $t=0.2$}
     \end{subfigure}
     \hfill
     \begin{subfigure}[t]{0.3\textwidth}
         \centering
         \includegraphics[height=4cm]{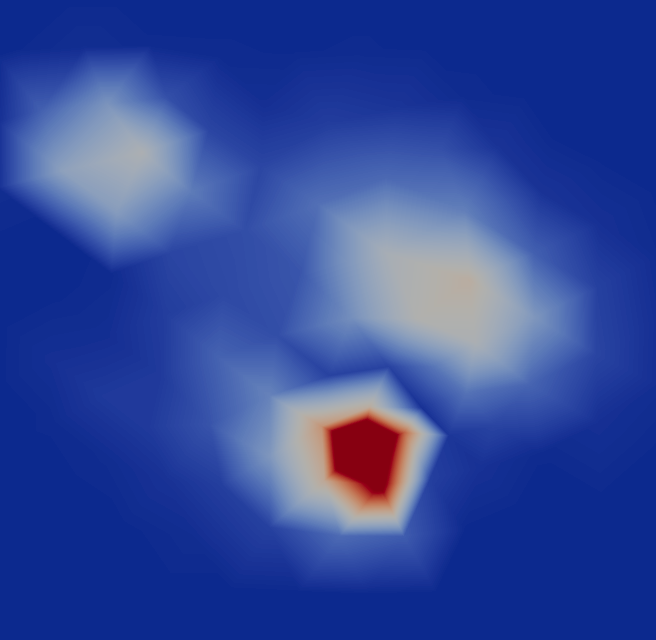}
         \caption{$\rho$ at $t=0.5$}
     \end{subfigure}
     \hfill
     \begin{subfigure}[t]{0.3\textwidth}
         \centering
         \includegraphics[height=4cm]{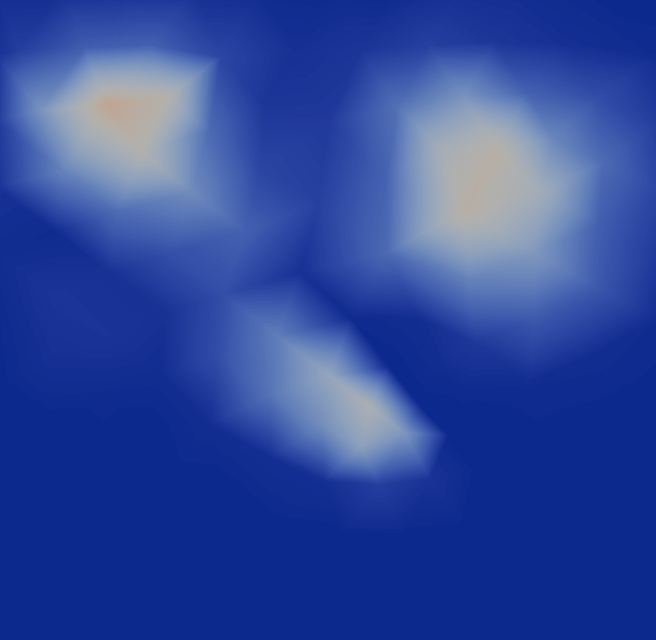}
         \caption{$\rho$ at $t=0.8$}
     \end{subfigure}
     \begin{subfigure}[t]{0.05\textwidth}
         \centering\includegraphics[height=4cm]{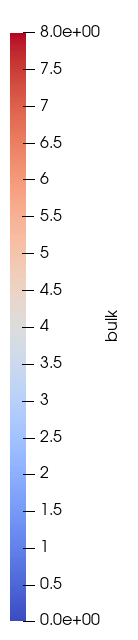}
     \end{subfigure}

     \centering
     \begin{subfigure}[t]{0.3\textwidth}
         \centering
         \includegraphics[height=4cm]{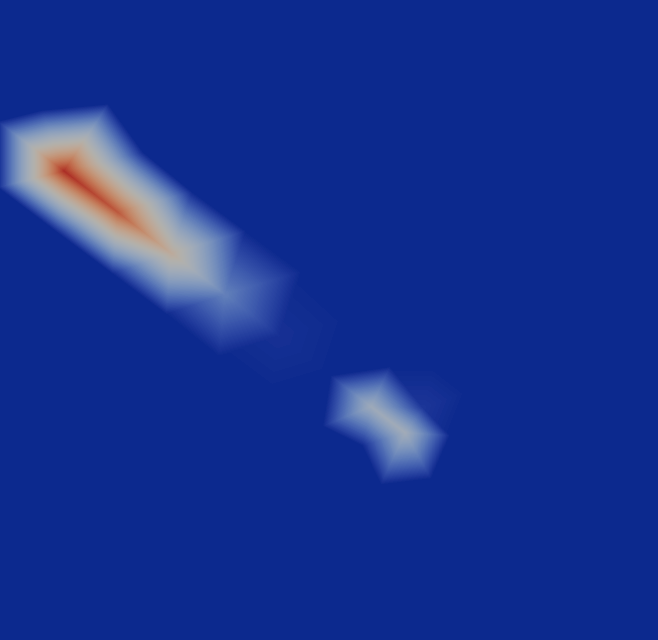}
         \caption{$\mu$ at $t=0.2$}
     \end{subfigure}
     \hfill
     \begin{subfigure}[t]{0.3\textwidth}
         \centering
         \includegraphics[height=4cm]{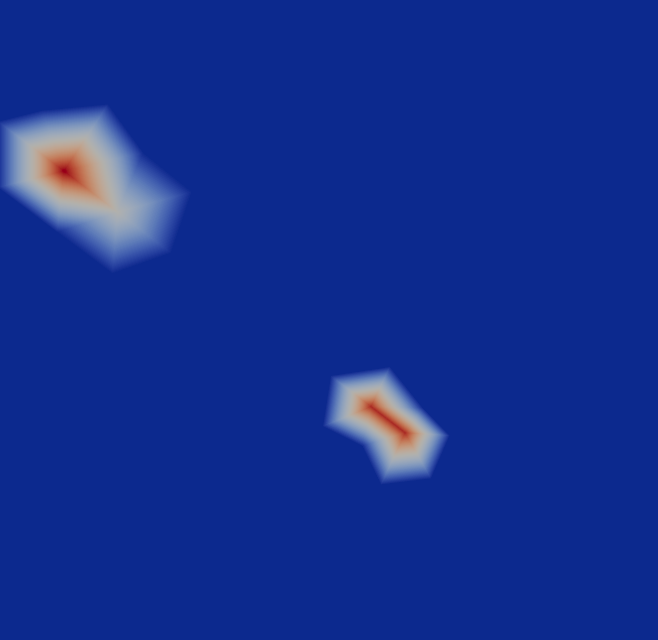}
         \caption{$\mu$ at $t=0.5$}
     \end{subfigure}
     \hfill
     \begin{subfigure}[t]{0.3\textwidth}
         \centering
         \includegraphics[height=4cm]{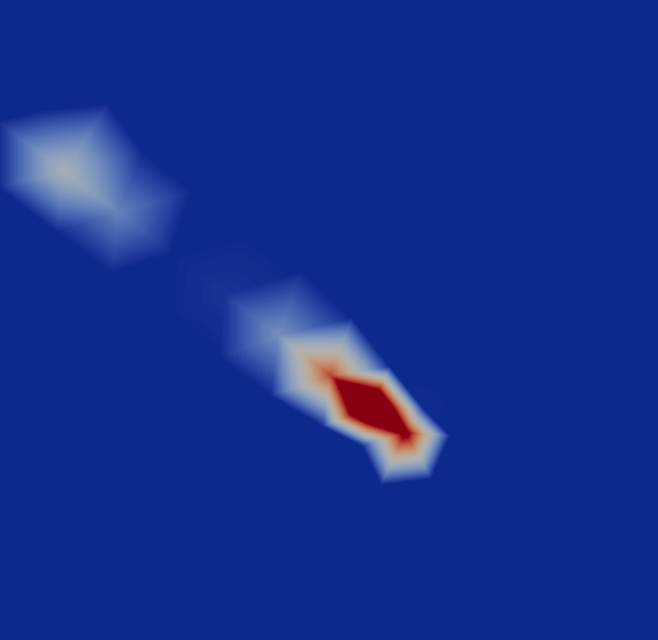}
         \caption{$\mu$ at $t=0.8$}
     \end{subfigure}
     \begin{subfigure}[t]{0.05\textwidth}
         \centering\includegraphics[height=4cm]{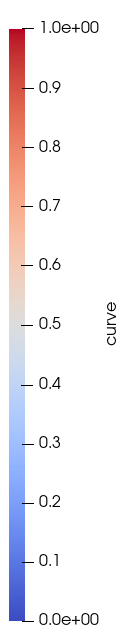}
     \end{subfigure}
     \caption{Time-evolution of measures for $\alpha_1=0.01=\alpha_2$ and the final curve from Example \ref{varying_ucurve}}
     \label{fig:meas_varucurve}
\end{figure}

\newpage 

\section*{Acknowledgements}
The authors like to thank Elias Döhrer (TU Chemnitz) for fruitful discussions and advice on Tangent-Point energies and Guosheng Fu (University of Notre Dame) for providing their code as a starting point for our implementation. 
M.C.'s research was supported by the NWO-M1 grant Curve Ensemble Gradient Descents for Sparse Dynamic Problems (Grant Number OCENW.M.22.302).
\section*{Data Availability}
No datasets were generated or analyzed during the current study.

\section*{Conflicts of Interest }
All authors declare that they have no conflict of interests related to this publication.

\bibliographystyle{alpha}
\bibliography{sample}

\appendix

\section{Derivation of the numerical method}\label{AppNumerics}

\subsection{Augmented Lagrangian formulation}\label{derivAugmLagr}
Following \cite{fu_high_2023}, we can rewrite the minimization problem \eqref{eq:BB2} as an unconstrained one by employing a suitable Lagrangian.
Let 
\begin{align*}
    \calG(\phi_t, \phi_t^{1d}) &:= -\int_\Omega \left[ \phi_1\, d\mu_1 - \phi_0\, d\mu_0 \right] -\int_\Gamma \left[ \phi^{1d}_1\, d\rho_1 - \phi^{1d}_0\, d\rho_0 \right], \\
    \skp{\phi_t, \rho_t}_\Omega &:=\int_{0}^1 \int_\Omega \phi_t \, d\rho_t \, dt \quad \text{and} \quad 
    \skp{\phi^{1d}_t, \mu_t}_\Gamma := \int_{0}^1 \int_\Gamma \phi^{1d}_t\, d\mu_t \, dt
\end{align*}
where $\phi_t : [0,1]\times \Omega \rightarrow \R$ and $\phi^{1d}_t : [0,1]\times\Gamma \rightarrow \R$ are smooth functions. 
Defining $\phi_t$ and $\phi_t^{1d}$ as test functions in Definition \ref{def:conteq} allows to rewrite \eqref{eq:BB2} as the saddle-point problem
\begin{align*}
    \inf\limits_{(\rho_t, J_t, \mu_t, \mathcal{V}_t, f_t) \in \mathcal{D}_{\rm adm}(\Gamma)} \sup\limits_{(\phi_t, \phi^{1d}_t)} &\int_0^1 \calA_\Gamma^{\alpha_1, \alpha_2}(\rho_t, J_t, \mu_t, \mathcal{V}_t, f_t)  \, dt - \calG(\phi_t, \phi^{1d}_t) \\
    &- \skp{(\partial_t\phi_t,\nabla\phi_t),(\rho_t, J_t)}_\Omega - \skp{(\partial_t\phi^{1d}_t, \nabla\phi^{1d}_t, \phi^{1d}_t-\phi_t\circ\gamma),(\mu_t, {\mathcal{V}_t}, f_t)}_\Gamma .
\end{align*}
To shorten notation, let
\begin{align*}
    F^{\alpha_1, \alpha_2}(u_\Omega, v_\Omega, u_\Gamma,v_\Gamma, w_\Gamma) := \Psi(u_\Omega, v_\Omega) + \alpha_1 \Psi(u_\Gamma,v_\Gamma) + \alpha_2 \Psi(u_\Gamma,w_\Gamma)
\end{align*}
for 
\begin{align*}
    \Psi(u,v) &:= \begin{cases}
        \frac{|v|^2}{2u} &\text{ if }  u>0\\
        0 &\text{ if } u=0 \text{ and } v=0\\
        +\infty &\text{ else}
    \end{cases}
\end{align*}
which corresponds to the integrand of \eqref{eq:actionformula} for linear mobilities and $\alpha_3=0$. 
By convexity and lower semicontinuity of $F^{\alpha_1, \alpha_2}$, the identity 
\begin{align*}
    F^{\alpha_1, \alpha_2}(u_\Omega, v_\Omega, u_\Gamma,v_\Gamma, w_\Gamma) & = \sup\limits_{\genfrac{}{}{0pt}{}{u_\Omega, v_\Omega}{u_\Gamma,v_\Gamma, w_\Gamma}} \skp{(u^*_\Omega, v^*_\Omega), (u_\Omega, v_\Omega)} + \skp{(u^*_\Gamma, v^*_\Gamma, w^*_\Gamma), (u_\Gamma, v_\Gamma, w_\Gamma)} \\
    & \qquad \qquad - \left(F^{\alpha_1, \alpha_2}\right)^*(u_\Omega, v_\Omega, u_\Gamma,v_\Gamma, w_\Gamma)
\end{align*}
holds, where $ \left(F^{\alpha_1, \alpha_2}\right)^*$ is the convex conjugate of $F^{\alpha_1, \alpha_2}$. Substituting this expression into our objective functional leads to the formulation
\begin{align*}
    \sup\limits_{(\rho_t, J_t, \mu_t, \mathcal{V}_t, f_t)} \inf\limits_{\genfrac{}{}{0pt}{}{(\phi_t, \phi_t^{1d}),}{(\rho_t^*, J_t^*, \mu_t^*, \mathcal{V}_t^*, f_t^*)}} &\int_0^1 \int \left(F^{\alpha_1, \alpha_2}\right)^*(\rho_t^*, J_t^*, \mu_t^*, \mathcal{V}_t^*, f_t^*)  \, dx dt + \calG(\phi_t, \phi_t^{1d}) \\
    &+ \skp{(\partial_t\phi_t - \rho_t^*,\nabla\phi_t - J_t^*),(\rho_t, J_t)}_\Omega \\
    &+ \skp{(\partial_t\phi^{1d}_t - \mu_t^*, \nabla\phi_t^{1d} - \mathcal{V}_t^*, \phi_t^{1d}-\phi_t\circ\gamma - f_t^*),(\mu_t, \mathcal{V}_t, f_t)}_\Gamma 
\end{align*}
fitting the necessary structure for applying the classical method ALG2 from \cite{augmented_1983}. Moreover, calculating the convex conjugate of $F^{\alpha_1, \alpha_2}$ gives
\begin{align*}
    &\left(F^{\alpha_1, \alpha_2}\right)^*(u^*_\Omega, v^*_\Omega, u^*_\Gamma,v^*_\Gamma, w^*_\Gamma)\\
    =& \sup\limits_{u_\Omega, v_\Omega, u_\Gamma,v_\Gamma, w_\Gamma} \skp{u^*_\Omega, u_\Omega} + \skp{v^*_\Omega, v_\Omega} + \skp{u^*_\Gamma, u_\Gamma} + \skp{v^*_\Gamma, v_\Gamma} + \skp{w^*_\Gamma, w_\Gamma} - F^{\alpha_1, \alpha_2}(u_\Omega, v_\Omega, u_\Gamma,v_\Gamma, w_\Gamma)\\
    =& \sup\limits_{u_\Omega, v_\Omega} \skp{u^*_\Omega, u_\Omega} + \skp{v^*_\Omega, v_\Omega} - \Psi(u_\Omega, v_\Omega) + \sup\limits_{u_\Gamma,v_\Gamma, w_\Gamma} \skp{u^*_\Gamma, u_\Gamma} + \skp{v^*_\Gamma, v_\Gamma} + \skp{w^*_\Gamma, w_\Gamma} - \alpha_1 \Psi(u_\Gamma, v_\Gamma) - \alpha_2 \Psi(u_\Gamma, w_\Gamma)\\
    =& \, \iota_{\{u_\Omega^*+\frac{|v_\Omega^*|^2}{2}\leq 0\}} + \iota_{\{u_\Gamma^* + \frac{1}{2}\left( \frac{|v_\Gamma^*|^2}{\alpha_1} + \frac{|w_\Gamma^*|^2}{\alpha_2} \right)\leq 0\}}
\end{align*}
for the convex indicator function 
\begin{align*}
    \iota_C (x) &:= \begin{cases}
        0 & \text { if } x\in C\\ +\infty & \text{ else}
    \end{cases}
\end{align*}
implying
\begin{align*}
    \left(\calA^{\alpha_1, \alpha_2}\right)^*(\rho_t^*, J_t^*, \mu_t^*, \mathcal{V}_t^*, f_t^*) &:= \int_\Omega \iota_{\{\rho_t^*+\frac{|J_t^*|^2}{2}\leq 0\}} \, dx + \int_\Gamma \iota_{\{\mu_t^* + \frac{1}{2}\left( \frac{|\mathcal{V}_t^*|^2}{\alpha_1} + \frac{|f_t^*|^2}{\alpha_2}\right)\leq 0\}} \, dx .
\end{align*}

In this formulation, the variables $\rho_t, J_t, \mu_t, \mathcal{V}_t$ and $f_t$ can be understood as Lagrange multipliers of the constraints $\partial_t\phi_t - \rho_t^* = 0$, $\nabla\phi_t - J_t^* = 0$, $\partial_t\phi^{1d}_t - \mu_t^*=0$, $\nabla\phi_t^{1d} - \mathcal{V}_t^* = 0$ and $\phi_t^{1d} - \phi_t - f_t^* = 0$.

\subsection{Algorithm}\label{DetailsAlg}

In the classical ALG2 method from \cite{augmented_1983}, the aim is to solve a saddle-point problem. Here, the optimization is decoupled and in each step the variables are considered separately. As introduced in Section \ref{Numerics}, there are three main steps to this algorithm. In what follows, we give a detailed explanation of each of these steps that result in Algorithm \ref{alg:AugmLagr}. Throughout, $(\phi_m, \phi^{1d}_m), (\rho^*_m, J^*_m, \mu^*_m, \mathcal{V}^*_m, f^*_m)$ and $(\rho_m, J_m, \mu_m, \mathcal{V}_m, f_m)$ denote the approximate solutions after $m$ iterations and, to improve readability, we omit the explicit time-dependence of the iterates.

\medskip 
\noindent \textbf{Step 1:} \underline{Update $(\phi_t, \phi_t^{1d})$}.

\noindent For this update, we aim at solving
\begin{align*}
    \inf\limits_{(\phi_{m+1}, \phi^{1d}_{m+1})} L^{\alpha_1, \alpha_2}(\rho_m, J_m, \mu_m, \mathcal{V}_m, f_m, \phi_{m+1}, \phi^{1d}_{m+1}, \rho^*_m, J^*_m, \mu^*_m, \mathcal{V}^*_m, f^*_m).
\end{align*}
In particular, the first variation with respect to $\phi_{m+1}$ and $\phi^{1d}_{m+1}$ needs to be zero as necessary condition, leading to the following system of equations
\begin{align*}
    &\frac{r_1}{r_2}\skp{\partial_t\phi_{m+1}, \partial_t\psi_t}_\Omega + \frac{r_1}{r_2}\skp{\nabla\phi_{m+1}, \nabla\psi_t}_\Omega + \skp{\phi_{m+1}\circ\gamma, \psi_{t}\circ\gamma}_\Gamma - \skp{\phi^{1d}_{m+1}, \psi_{t}\circ\gamma}_\Gamma\\
    =& \frac{1}{r_2}\skp{\psi_1,\rho_1} - \frac{1}{r_2}\skp{\psi_0, \rho_0} + \skp{\psi_t\circ\gamma, \frac{1}{r_2}f_m -f^*_m}_\Gamma + \skp{\partial_t\psi_t, \frac{r_1}{r_2}\rho^*_m - \frac{1}{r_2}\rho_m}_\Omega + \skp{\nabla\psi_t, \frac{r_1}{r_2} J^*_m-\frac{1}{r_2}J_m}_\Omega \\
    &\skp{\phi^{1d}_{m+1}, \psi_t^{1d}}_\Gamma + \skp{\partial_t\phi^{1d}_{m+1}, \partial_t\psi_t^{1d}}_\Gamma + \skp{\nabla\phi^{1d}_{m+1}, \nabla\psi_t^{1d}}_\Gamma - \skp{\phi_{m+1}\circ\gamma, \psi_t^{1d}}_\Gamma \\
    +& \frac{1}{r_2}\skp{\psi^{1d}_1, \mu_1}_\Gamma - \frac{1}{r_2}\skp{\psi^{1d}_0, \mu_0}_\Gamma + \skp{\psi^{1d}_t, f^*_m - \frac{1}{r_2}f_m}_\Gamma + \skp{\partial_t\psi^{1d}_t, \mu^*_m - \frac{1}{r_2}\mu_m}_\Gamma + \skp{\nabla\psi_t^{1d}, \mathcal{V}^*_m - \frac{1}{r_2}{\mathcal{V}_m}}_\Gamma
\end{align*}
for test functions $\psi_t$ and $\psi_t^{1d}$.
Introducing the bilinear-form 
\begin{align*}
    &A(\phi_{m+1}, \psi_t, \phi^{1d}_{m+1}, \psi^{1d}_t) \\
    :=& \frac{r_1}{r_2}\skp{\partial_t\phi_{m+1}, \partial_t\psi_t}_\Omega + \frac{r_1}{r_2}\skp{\nabla\phi_{m+1}, \nabla\psi_t}_\Omega + \skp{\phi_{m+1}\circ\gamma, \psi_t\circ\gamma}_\Gamma - \skp{\phi^{1d}_{m+1}, \psi_t\circ\gamma}_\Gamma\\
    +& \skp{\phi^{1d}_{m+1}, \psi_t^{1d}}_\Gamma + \skp{\partial_t\phi^{1d}_{m+1}, \partial_t\psi_t^{1d}}_\Gamma + \skp{\nabla\phi^{1d}_{m+1}, \nabla\psi_t^{1d}}_\Gamma - \skp{\phi_{m+1}\circ\gamma, \psi_t^{1d}}_\Gamma 
\end{align*}
and the linear form
\begin{align*}
    &F^m(\psi_t, \psi_t^{1d})\\
    :=& \frac{1}{r_2}\skp{\psi_1,\rho_1} - \frac{1}{r_2}\skp{\psi_0, \rho_0} + \skp{\psi_t\circ\gamma, \frac{1}{r_2}f_m -f^*_m}_\Gamma + \skp{\partial_t\psi_t, \frac{r_1}{r_2}\rho^*_m - \frac{1}{r_2}\rho_m} + \skp{\nabla\psi_t, \frac{r_1}{r_2} J^*_m-\frac{1}{r_2}J_m}\\
    +& \frac{1}{r_2}\skp{\psi^{1d}_1, \mu_1}_\Gamma - \frac{1}{r_2}\skp{\psi^{1d}_0, \mu_0}_\Gamma + \skp{\psi_t^{1d}, f^*_m - \frac{1}{r_2}f_m}_\Gamma + \skp{\partial_t\psi_t^{1d}, \mu^*_m - \frac{1}{r_2}\mu}_\Gamma + \skp{\nabla\psi_t^{1d}, \mathcal{V}^*_m - \frac{1}{r_2}{\mathcal{V}_m}}_\Gamma
\end{align*}
the optimality system reads
\begin{align*}
    A(\phi_{m+1}, \psi_t, \phi^{1d}_{m+1}, \psi^{1d}_t) &= F^m(\psi_t, \psi^{1d}_t).
\end{align*}
\textbf{Step 2:} \underline{Update $(\rho_t^*, J_t^*, \mu_t^*, \mathcal{V}_t^*, f_t^*)$}.

\noindent The new iterates $(\rho^*_{m+1}, J^*_{m+1}, \mu^*_{m+1}, \mathcal{V}^*_{m+1}, f^*_{m+1})$ are defined as the solution of
\begin{align*}
    \inf\limits_{(\rho^*_{m+1}, J^*_{m+1}, \mu^*_{m+1}, \mathcal{V}^*_{m+1}, f^*_{m+1})} &\int_0^1 \left(\calA^{\alpha_1, \alpha_2}\right)^*(\rho^*_{m+1}, J^*_{m+1}, \mu^*_{m+1}, \mathcal{V}^*_{m+1}, f^*_{m+1}) \\
    &- \skp{\rho^*_{m+1}, \rho_m + r_1\partial_t\phi_{m+1}}_\Omega + \frac{r_1}{2}|\rho^*_{m+1}|_\Omega^2\\
    &- \skp{J^*_{m+1}, J_m + r_1\nabla\phi_{m+1}}_\Omega + \frac{r_1}{2}|J^*_{m+1}|_\Omega^2\\
    &- \skp{\mu^*_{m+1}, \mu_{m} + r_2\partial_t\phi^{1d}_m}_\Gamma + \frac{r_2}{2} |\mu^*_{m+1}|^2_\Gamma\\
    &- \skp{\mathcal{V}^*_{m+1}, \mathcal{V}_{m} + r_2\nabla\phi^{1d}_m}_\Gamma + \frac{r_2}{2} |\mathcal{V}^*_{m+1}|^2_\Gamma\\
    &- \skp{f^*_{m+1}, f_{m} + r_2\left( \phi^{1d}_{m+1} - \phi_{m+1}\circ\gamma \right)}_\Gamma + \frac{r_2}{2} |f^*_{m+1}|^2_\Gamma
\end{align*}
composed of two independent problems, one in $\Omega$ behaving as in \cite{fu_high_2023} and the other one on $\Gamma$.
To simplify notation, we introduce the following variables containing the information from the previous iteration
\begin{align*}
    \eta_{\rho^*} &:= \partial_t\phi_{m+1} + \frac{1}{r_1}\rho_m,\\
    \eta_{J^*} &:= \nabla\phi_{m+1} + \frac{1}{r_1}J_m,\\
    \eta_{\mu^*} &:= \partial_t\phi^{1d}_{m+1} + \frac{1}{r_2}\mu_m,\\
    \eta_{\mathcal{V}^*} &:= \nabla\phi^{1d}_{m+1} + \frac{1}{r_2}\mathcal{V}_m,\\
    \eta_{f^*} &:= \phi^{1d}_{m+1} - \phi_{m+1}\circ\gamma + \frac{1}{r_2}f_m
\end{align*}
and $\eta_\Omega := (\eta_{\rho^*}, \eta_{J^*})$, $\eta_\Gamma := (\eta_{\mu^*}, \eta_{\mathcal{V}^*}, \eta_{f^*})$.
The minimization problems read
\begin{align*}
    \inf\limits_{(\rho^*_{m+1}, J^*_{m+1})}& \iota_{\{\rho^*_{m+1} + \frac{1}{2}|J^*_{m+1}|\leq 0 \}} - r_1\skp{\eta_\Omega, (\rho^*_{m+1}, J^*_{m+1})} + \frac{r_1}{2}\left( |\rho^*_{m+1}|^2 + |J^*_{m+1}|^2\right)\\
    \inf\limits_{(\mu^*_{m+1}, \mathcal{V}^*_{m+1}, f^*_{m+1})}& \iota_{\{ \mu^*_{m+1} + \frac{1}{2}\left( \frac{|\mathcal{V}^*_{m+1}|^2}{\alpha_1} + \frac{|f^*_{m+1}|^2}{\alpha_2}\right)\leq 0\}} - r_2\skp{\eta_\Gamma, (\mu^*_{m+1}, V^*_{m+1}, f^*_{m+1})} \\
    &+ \frac{r_2}{2}\left( |\mu^*_{m+1}|^2 + |\mathcal{V}^*_{m+1}|^2 +|f^*_{m+1}|^2\right).
\end{align*}
If the pair $(\eta_\Omega, \eta_\Gamma)$ is admissible, then $(\rho^*_{m+1}, J^*_{m+1}, \mu^*_{m+1}, \mathcal{V}^*_{m+1}, f^*_{m+1}) = (\eta_\Omega, \eta_\Gamma)$ is optimal. Otherwise let
\begin{align*}
    x_\Omega &:= |J^*_{m+1}|\\
    y_\Omega &:= J^*_{m+1,1}\\
    x_\Gamma &:= |\mathcal{V}^*_{m+1}|\\
    y_\Gamma &:= f^*_{m+1}
\end{align*}
where $J^*_{m+1,1}$ is the first component of $J^*_{m+1}$. Substituting these into the minimization and formulating the KKT system leads to 
\begin{align*}
    0 &= x_\Omega^3 + 2 (1+\eta_{\rho^*}) x_\Omega - 2 | \eta_{J^*}|\\
    0 &= x_\Gamma^3 + \frac{\alpha_1}{\alpha_2} x_\Gamma y_\Gamma^2 + 2 \left( \alpha_1^2 + \alpha_1\eta_{\mu^*}\right) x_\Gamma - 2 \alpha_1^2 |\eta_{\mathcal{V}^*}|\\
    0 &= y_\Gamma^3 + \frac{\alpha_2}{\alpha_1} x_\Gamma^2 y_\Gamma + 2 \left( \alpha_2^2 + \alpha_2\eta_{\mu^*}\right) y_\Gamma - 2 \alpha_2^2 \eta_{f^*}
\end{align*}
instead. This system can be solved using Newton's method. 

\medskip

\noindent \textbf{Step 3:} \underline{Update $(\rho_t, J_t, \mu_t, \mathcal{V}_t, f_t)$}.

\noindent The new iterate will be defined as the solution of
\begin{align*}
    &\sup\limits_{(\rho_{m+1}, J_{m+1}, \mu_{m+1}, V_{m+1}, f_{m+1})} \skp{(\partial_t\phi_{m+1} - \rho^*_{m+1},\nabla\phi_{m+1} - J^*_{m+1}),(\rho, J)}_\Omega\\
    &+\skp{(\partial_t\phi^{1d}_{m+1} - \mu^*_{m+1}, \nabla\psi_{m+1} - \mathcal{V}^*_{m+1}, \phi^{1d}_{m+1}-\phi_{m+1}\circ\gamma - f^*_{m+1}),(\mu_{m+1}, \mathcal{V}_{m+1}, f_{m+1})}_\Gamma.
\end{align*}
and approximated using one gradient descent step given as
\begin{align*}
    \rho_{m+1} &:= \rho_{m} + r_1 \left( \partial_t\phi_{m+1} - \rho^*_{m+1} \right),\\
    J_{m+1} &:= J_{m} + r_1 \left( \nabla\phi_{m+1} - J^*_{m+1} \right),\\
    \mu_{m+1} &:= \mu_{m} + r_2 \left( \partial_t\phi^{1d}_{m+1} - \mu^*_{m+1} \right),\\
    \mathcal{V}_{m+1} &:= \mathcal{V}_m + r_2 \left( \nabla\phi^{1d}_{m+1} - \mathcal{V}^*_{m+1} \right),\\
    f_{m+1} &:= f_{m} + r_2 \left( \phi^{1d}_{m+1} - \phi_{m+1} - f^*_{m+1} \right).
\end{align*}

\end{document}